\documentclass[11pt,letterpaper]{article}
\usepackage[T1]{fontenc}
\usepackage{graphicx}
\usepackage{amsfonts,amssymb,amsbsy,amsmath,amsthm}
\usepackage{cite}
\usepackage{mathpazo}
\usepackage{tgpagella}

\usepackage[margin=0.85in]{geometry}
\usepackage{float}
\usepackage{mathtools}
\usepackage{booktabs}
\usepackage{siunitx}
\usepackage{color}
\graphicspath{{figures/}}
\usepackage{subcaption}
\usepackage{algorithm}
\usepackage{algpseudocode}
\usepackage{hyperref}
\hypersetup{
  colorlinks=true,
  linkcolor=blue,
  citecolor=blue,
  urlcolor=blue
}

\theoremstyle{plain}
\newtheorem{theorem}{Theorem}[section]

\theoremstyle{definition}
\newtheorem{definition}{Definition}[section]

\newtheorem{remark}{Remark}[section]
\newtheorem{proposition}{Proposition}[section]
\makeatletter
\renewenvironment{abstract}{%
  \normalfont\small
  \list{}{\labelwidth\z@
    \leftmargin4pc
    \itemindent\z@
    \parsep\z@ \@plus\p@
  }%
  \item[\hskip\labelsep\bfseries\abstractname.]%
}{%
  \endlist
}

\renewcommand\@maketitle{%
  \global\topskip8\p@\relax
  \newpage
  \null
  \vskip 1em%
  \begin{center}%
  \let\footnote\thanks
    {\LARGE\bfseries \@title \par}%
    \vskip 1.2em%
    {\large
      \lineskip .5em%
      \begin{tabular}[t]{c}%
        \@author
      \end{tabular}\par}%
    \ifx\@date\@empty\else\vskip 1em{\large \@date}\fi
  \end{center}%
  \par
  \vskip 1em}

\long\def\@makefntext#1{\parindent 1em
  \noindent{$\m@th^{\@thefnmark}$}#1}

\renewcommand\subsubsection{\@startsection{subsubsection}{3}{\z@}%
                                     {-2ex\@plus -1ex \@minus -.2ex}%
                                     {1ex \@plus .2ex}%
                                     {\normalfont\normalsize\bfseries}}
\makeatother

\newcommand{\ph}{\phi}%
\newcommand{\phn}{\phi^n}
\newcommand{\phnp}{\phi^{n+1}}%
\newcommand{\phNN}{\phi_\theta}
\newcommand{\grad}{\nabla}%
\newcommand{\norm}[1]{\left\| #1 \right\|}
\newcommand{\E}{\mathcal{F}}
\newcommand{\X}{\mathcal{X}}
\newcommand{\argmin}{\mathop{\mathrm{argmin}}}
\newcommand{\dd}{\mathrm{d}}
\newcommand{\ip}[2]{\left\langle #1,#2\right\rangle}

\newif\ifshowtodos\showtodostrue

\begin{document}
\raggedbottom

\title{High-Order Variable-Scaled Energetic Variational Neural Networks for Phase-Field Gradient Flows}

\author{%
  Xiaobo Jing\textsuperscript{1},\quad
  Leiyi Dong\textsuperscript{1},\quad
  Jia Zhao\textsuperscript{2,}\thanks{Corresponding author.
    \textit{Email addresses:} \texttt{xiaobo@seu.edu.cn} (X.~Jing);
    \texttt{220251948@seu.edu.cn} (L.~Dong);
    \texttt{jia.zhao@ua.edu} (J.~Zhao)}%
  \\[6pt]
  \normalsize\itshape
  \textsuperscript{1}School of Mathematics, Southeast University, Nanjing 210096, P.R. China.\\
  \normalsize\itshape
  \textsuperscript{2}Department of Mathematics, University of Alabama, Tuscaloosa, AL 35487, USA
}
\date{}

\maketitle
\thispagestyle{empty}

\noindent\hrulefill
\begin{abstract}
In this paper, we develop a high-order variable-scaled energetic variational neural network (VS-EVNN) method for phase-field gradient flows. Each discretization stage uses an updated-state network whose spatial inputs are centered and divided by the interface scale, while the sampled energy and movement penalties remain unchanged relative to the baseline EVNN formulation. This rescaling is motivated by a coordinate change in the energy that normalizes the gradient coefficient of the transformed functional. We combine the scaled representation with high-order, multi-stage variational extrapolation schemes and a heuristic adaptive time-stepping controller. Each stage is solved as a network minimization problem, in which the earlier stages serve as fixed anchors on the quadrature grid, yielding a neural realization of these multi-stage variational schemes, including a mobility-weighted, mass-conserving version for Cahn--Hilliard flows. The method is tested on one- and two-dimensional Allen--Cahn and Cahn--Hilliard flows, the latter in the mass-conserving metric. Under the reported optimization settings, the scaled networks yield lower solution errors than the unscaled EVNN baseline, and the higher-order schemes improve the accuracy in the test examples.
\end{abstract}

\vspace{-6pt}
\noindent{\small\textbf{Key words}: Allen--Cahn equation, Cahn--Hilliard equation, energetic variational neural network, variable scaling, high-order scheme, multi-stage variational scheme, gradient flow, energy stability}

\noindent\hrulefill

\section{Introduction} Gradient-flow partial differential equations (PDEs) appear throughout physics, materials science, and biology, where they describe systems that evolve to minimize an energy functional. The Allen--Cahn equation~\cite{AllenCahn1979}, the Cahn--Hilliard equation~\cite{CahnHilliard1958}, and mean curvature flow are standard examples that model phase separation, interface dynamics, and geometric evolution. Traditional numerical methods for these equations must contend with stiffness, the fine spatial resolution required near interfaces, and the cost of long-time simulations~\cite{Du2020, Shen2010}.

A solver with a mesh-free trial representation could be reused across geometries and problem instances without generating a new mesh for each configuration. Physics-informed neural networks (PINNs) provide one such route. The PINN framework, formulated in its modern form by Raissi et al.~\cite{Raissi2019PINN}, encodes PDE residuals directly into the loss function. Related neural PDE methods include variational formulations, such as the Deep Ritz method~\cite{E2018Deep} and domain-decomposition approaches that enforce flux continuity across subdomain interfaces~\cite{Jagtap2020cPINN}. Fourier feature embeddings~\cite{Tancik2020Fourier} and hash encodings~\cite{Muller2022InstantNGP} have also been used to represent high-frequency content.

Applying PINNs to phase-field models is difficult, especially when directly training a space--time network on the Allen--Cahn and Cahn--Hilliard equations. Thin moving interfaces, stiff nonlinearities, and long horizons create an uneven loss landscape, and a global-in-time fit may learn later times before earlier ones. Wight and Zhao~\cite{WightZhao2021} combined adaptive collocation near interfaces with time-segment marching. Mattey and Ghosh~\cite{Mattey2022bcPINN} proposed backward-compatible sequential training for the same equations, whereas causal residual weighting~\cite{Wang2024Causality} explicitly orders training over time.

Input transformations are a separate response to sharp spatial scales. Variable linear transformations have been used for thin-layer flow PINNs~\cite{Wu2024VLTPINN}, and VS-PINN~\cite{Ko2025VSPINN} rescales space--time inputs, analyzes the resulting training kernel, and includes an Allen--Cahn benchmark. More recent phase-field variants include PF-PINNs for coupled Allen--Cahn/Cahn--Hilliard systems~\cite{Chen2025PFPINNs}, data-driven discovery of phase-field dynamics~\cite{Katbar2025DataDriven}, and a discrete-time moving-contact-line PINN that combines variable scaling with dynamic boundary conditions~\cite{Chen2026MCLPINN}. Residual formulations can also incorporate energy information through adaptive window selection~\cite{Guo2024AdaptiveEnergyPINN} or an energy-dissipation penalty~\cite{Kutuk2025EnergyPINN}. These energy-informed variants incorporate energy information into training but do not, by themselves, establish an unconditional per-step energy inequality for inexactly optimized networks. The Energetic Variational Neural Network (EVNN) framework~\cite{Hu2024EVNN} defines each time step via energy minimization; transferring its variational energy inequality to computed networks likewise requires control over the inner solves.

The minimizing-movements viewpoint has several neural precedents beyond EVNN. The deep minimizing movement scheme of Park et al.~\cite{Park2023DMM} trains a network at each implicit step. Georgoulis, Loulakis, and Tsiourvas~\cite{Georgoulis2023Discrete} approximate discrete gradient flows with sequences of residual-type networks. Phase-Field DeepONet incorporates minimizing movements into an operator time stepper for Allen--Cahn and Cahn--Hilliard dynamics~\cite{Li2023PhaseFieldDeepONet}, while Mattey and Ghosh~\cite{Mattey2025SNN} use separable networks and prove a discrete energy result. Zheng, Wang, and Yang~\cite{Zheng2026Increment} formulate the neural step over solution increments and analyze optimization and error propagation on a Riemannian manifold. Their increment geometry is distinct from the fixed spatial-coordinate reparametrization considered here.

Other structure-preserving neural approaches make different choices. Energy-dissipative evolutionary DeepONets use a scalar auxiliary variable and preserve a modified discrete energy~\cite{Zhang2024EnergyDeepONet}. Feature-based DNN Galerkin methods use neural features to define trial spaces with semidiscrete energy dissipation, which are then paired with energy-stable time integrators~\cite{Tang2026DNNG}. Exact periodic neural layers are also available~\cite{DongNi2021Periodic}. For time discretization, the present work uses the multi-stage variational extrapolation of Zaitzeff, Esedo\u{g}lu, and Garikipati~\cite{Zaitzeff2020}, later extended to semi-implicit schemes~\cite{Zaitzeff2021} and general metrics~\cite{Han2023Metric}.

Energy stability has a long history in classical numerical methods for phase-field models. Because the energy dissipation law is the defining structural property of a gradient flow, many time discretizations have been designed to inherit it at the discrete level. These include convex splitting schemes that treat the convex and concave parts of the energy implicitly and explicitly~\cite{eyre1998unconditionally}, stabilized semi-implicit schemes that add a linear stabilization term to permit large time steps~\cite{Shen2010}, and exponential time differencing (ETD) schemes that integrate the stiff linear part exactly~\cite{Du2019ETD}. Invariant energy quadratization~\cite{Yang2016IEQ} and scalar auxiliary variable~\cite{Shen2018SAV} reformulations yield linear schemes with unconditional dissipation of a modified discrete energy. A related Lagrange multiplier approach~\cite{Cheng2020Lagrange} enforces dissipation of the original energy and requires an additional scalar nonlinear solve. These methods are efficient, and many are supported by rigorous error analysis, but they typically operate on a fixed spatial mesh or basis. The neural-network discretization considered here keeps the same minimizing-movements structure while replacing the spatial discretization with a mesh-free network ansatz, with energies and metric terms still evaluated on fixed quadrature grids.

This work addresses a scale mismatch between the network coordinates and the solution features in the EVNN inner optimization. A phase-field profile varies across an interface layer whose width is set by the small interface parameter, so an accurate trial function must produce order-one changes over a region far narrower than the domain. A coordinate network with generically initialized first-layer weights represents such thin transitions inefficiently during training. To remove the mismatch, we propose the Variable-Scaled EVNN (VS-EVNN), which evaluates each updated-state network using spatial coordinates centered on the domain and scaled by a factor proportional to the interface parameter. At the same time, the sampled energy, the quadrature, and the movement penalties of the minimizing-movements objective are unchanged. A change of coordinates in the energy shows that this choice normalizes the gradient coefficient of the transformed functional. At a fixed time step and spatial resolution, the scaled representation gives more accurate solutions than the baseline EVNN in our tests. We then combine the scheme with the high-order multi-stage variational time integration of~\cite{Zaitzeff2020} and with adaptive time-stepping within the same minimizing-movements structure. On the high-order side, the paper contributes the network realization of the multi-stage construction, in which each stage is solved by a separate warm-started network and the earlier stages are incorporated into the loss as fixed grid values. This realization extends to the mobility-weighted discrete $H^{-1}$ metric for Cahn--Hilliard flows. The paper also identifies which parts of the exact-solve energy-stability argument survive inexact network minimization, and it notes a sign correction for one coefficient in the six-stage third-order matrix in the preprint version of~\cite{Zaitzeff2020}. A set of one- and two-dimensional Allen--Cahn and Cahn--Hilliard experiments characterizes the relative contributions of temporal discretization error and the non-temporal error floor. The same scaled representation operates in the $H^{-1}$ metric with the mass constraint enforced by projection.

\section{Gradient-flow background and the EVNN discretization}

\subsection{Gradient flows and the choice of metric} Let \(\X\) be a Hilbert space with inner product \(\ip{\cdot}{\cdot}\) and norm \(\norm{\cdot}\), and let
\[
\E : \X \to (-\infty,+\infty]
\]
be an energy functional. The formal gradient flow of \(\E\) is
\begin{equation}
\partial_t \phi(t) = -\grad \E\bigl(\phi(t)\bigr),
\qquad t>0.
\label{eq:abstract-gf}
\end{equation}
When \(\E\) is Fr\'echet differentiable, \(\grad \E(\phi)\in \X\) is defined by
\[
D\E(\phi)[w] = \ip{\grad \E(\phi)}{w}
\qquad \text{for all } w\in \X.
\]
The choice of \(\X\) (equivalently, of the inner product used to identify the derivative \(D\E\) with a gradient) is part of the model. The same energy functional generates different dynamics in different metrics~\cite{Ambrosio2008}.

At the PDE level, the same structure is often written in variational derivative form. For a spatial field \(\phi(x,t)\) on a sufficiently smooth domain \(\Omega\), if
\[
\E(\phi)=\int_\Omega f(x,\phi,\grad \phi)\,\dd x,
\]
then taking \(\X=L^2(\Omega)\), with \(\ip{u}{v}=\int_\Omega uv\,\dd x\), identifies \(\grad\E(\phi)\) with the variational derivative \(\frac{\delta\E}{\delta\phi}\), and the \(L^2\)-gradient flow reads
\begin{equation}
\partial_t \phi = -\frac{\delta \E}{\delta \phi}.
\end{equation}
A standard example is the Allen--Cahn equation:
\[
\E(\phi)=\int_\Omega \left(\frac12 |\grad \phi|^2 + F(\phi)\right)\dd x,
\qquad
\frac{\delta \E}{\delta \phi} = -\Delta \phi + F'(\phi),
\]
so the gradient flow is
\[
\partial_t \phi = \Delta \phi - F'(\phi).
\]
Thus, the Allen--Cahn equation is the gradient flow of the diffuse-interface energy in \(\X=L^2(\Omega)\).

With a mobility operator \(K\), the flow takes the more general form
\begin{equation}
\partial_t \phi = -K\,\frac{\delta \E}{\delta \phi}.
\label{eq:mobility-gf}
\end{equation}
When \(K\) is symmetric and positive definite, \eqref{eq:mobility-gf} is again a gradient flow of the same energy, now in the Hilbert space \(\X\) whose inner product is weighted by \(K^{-1}\), namely \(\ip{u}{v}_{\X}=\ip{K^{-1}u}{v}_{L^2}\). When \(K\) is only positive semidefinite, the inner product is defined on the range of \(K\) through the pseudoinverse, \(\ip{u}{v}=\ip{K^{\dagger}u}{v}_{L^2}\). For the Laplacian mobility below, with periodic or homogeneous Neumann boundary conditions, this range is the mean-zero subspace. For example, in Cahn--Hilliard,
\[
\partial_t \phi = \Delta \mu,
\qquad
\mu = \frac{\delta \E}{\delta \phi},
\]
the mobility is \(K=-\Delta\) (with periodic or homogeneous Neumann boundary conditions), and the equation is the gradient flow of the same diffuse-interface energy in \(\X=H^{-1}(\Omega)\), with the homogeneous \(H^{-1}\) inner product defined on mean-zero functions by
\[
\ip{u}{v}_{H^{-1}} = \ip{(-\Delta)^{-1}u}{v}_{L^2},
\qquad
\norm{u}_{H^{-1}}^2 = \int_\Omega \bigl|\grad(-\Delta)^{-1}u\bigr|^2\,\dd x.
\]
Because Cahn--Hilliard conserves the spatial mean of \(\phi\), the dynamics evolve in an affine subspace of fixed mean. Differences such as \(\phi-\phi^n\) are therefore mean-zero, and their \(H^{-1}\) norms are well defined.

The choice of metric carries over to the time discretization. In the minimizing-movements scheme, the metric term \(\frac{1}{2\tau}\norm{\phi-\phi^n}^2\) must be measured in the \(\X\)-norm of the flow. This is the \(L^2\) norm for Allen--Cahn and the \(H^{-1}\) norm for Cahn--Hilliard-type flows~\eqref{eq:mobility-gf}~\cite{Ambrosio2008}. Throughout the paper, unsubscripted \(\norm{\cdot}\) and \(\ip{\cdot}{\cdot}\) denote the norm and inner product of the underlying space \(\X\). We write an explicit subscript, as in \(\norm{\cdot}_{L^2}\), when a concrete space is meant.

\subsection{Minimizing-movements scheme}
\label{subsec:minimizing-movements}

The minimizing-movements scheme is the variational implicit Euler discretization of \eqref{eq:abstract-gf}, also known as the proximal-map iteration. Given an initial state \(\phi^0\in \X\) and a time step \(\tau>0\), define \(\phi^{n+1}\) recursively by
\begin{equation} \label{eq:minimizing_movement}
 \phi^{n+1} \in \argmin_{\phi\in \X}
 \left\{
 \Phi_\tau^n(\phi)
 :=
 \frac{1}{2\tau}\norm{\phi-\phi^n}^2 + \E(\phi)
 \right\}.
\end{equation}
The first term penalizes motion away from the previous state, whereas the second drives the evolution downhill in energy. The metric term is a discrete dissipation rate, and the variational derivative of the energy supplies the generalized force, so the objective encodes the Onsager force--flux--mobility structure of the flow and realizes the Onsager variational principle at the time-discrete level~\cite{wang2020generalized}. For the abstract statement, the existence of a minimizer follows from the direct method when the objective is proper, coercive, and sequentially weakly lower semicontinuous on a weakly closed admissible subset of \(\X\). For minimization over the full finite-dimensional state space, continuity and coercivity of the sampled objective suffice.

Taking \(\phi=\phn\) as a competitor in \eqref{eq:minimizing_movement} immediately yields the discrete energy law.

\begin{proposition}[Energy Dissipation]
\label{prop:energy_dissipation} The minimizing-movements scheme satisfies
\begin{equation}
\E(\phnp) + \frac{1}{2\tau}\norm{\phnp - \phn}^2 \leq \E(\phn).
\end{equation}
In particular, $\E(\phnp) \leq \E(\phn)$ for all $n \geq 0$.
\end{proposition}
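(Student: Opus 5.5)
The plan is the competitor argument the paper already points to: compare the value of the objective $\Phi_\tau^n$ at the minimizer $\phnp$ with its value at the admissible point $\phi=\phn$. Since $\phnp$ is defined by \eqref{eq:minimizing_movement} as a global minimizer of $\Phi_\tau^n$ over $\X$, and $\phn\in\X$, we have
\[
\Phi_\tau^n(\phnp)\le \Phi_\tau^n(\phn).
\]
Evaluating the right-hand side, the metric term vanishes because $\norm{\phn-\phn}=0$, so $\Phi_\tau^n(\phn)=\E(\phn)$. The left-hand side is exactly $\E(\phnp)+\frac{1}{2\tau}\norm{\phnp-\phn}^2$, which gives the claimed inequality. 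The monotonicity statement $\E(\phnp)\le\E(\phn)$ follows by dropping the nonnegative term $\frac{1}{2\tau}\norm{\phnp-\phn}^2$, since $\tau>0$.

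The only points needing care are the well-posedness issues I would state as standing assumptions rather than prove.

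\textbf{Finiteness of the energy.} I would assume $\E(\phi^0)<\infty$. Then by induction every $\E(\phn)$ is finite, because the inequality bounds $\E(\phnp)$ by $\E(\phn)$. This also ensures the comparison does not involve $+\infty$ on both sides.

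\textbf{Existence of the iterate.} I would assume $\phnp$ exists; the direct-method hypotheses mentioned just before the proposition provide this.

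\textbf{The $H^{-1}$ case.} Here the minimization is over the affine subspace of fixed mean. The competitor $\phn$ lies in that admissible set, so the same argument applies verbatim.

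There is no genuine obstacle. The one subtlety worth flagging is that the argument uses \emph{global} minimality, or at least $\Phi_\tau^n(\phnp)\le\Phi_\tau^n(\phn)$. For an inexactly optimized network iterate this must be verified separately, and that is presumably where the later discussion of inexact solves enters.
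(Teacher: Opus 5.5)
Your proof is correct and is the same as the paper's: compare $\Phi_\tau^n(\phnp)$ with the competitor value $\Phi_\tau^n(\phn)=\E(\phn)$, then drop the nonnegative movement term to get monotonicity. Your side remarks match how the paper uses the result. It also notes that $\phn\in\mathcal{M}_{m_0}$ is admissible in the Cahn--Hilliard case, and it handles inexact network solves separately in Remark~\ref{rem:stability-network}.
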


\begin{proof}
By definition, $\phnp$ minimizes the functional $\Phi_\tau^n$ of \eqref{eq:minimizing_movement}. Since $\Phi_\tau^n(\ph)=\E(\ph)+\frac{1}{2\tau}\norm{\ph-\phn}^2$, evaluating at $\ph=\phn$ as a competitor gives
\[
\E(\phnp) + \frac{1}{2\tau}\norm{\phnp - \phn}^2 = \Phi_\tau^n(\phnp) \leq \Phi_\tau^n(\phn) = \E(\phn),
\]
which is the stated inequality. Discarding the nonnegative metric term yields $\E(\phnp)\leq\E(\phn)$.
\end{proof}

The scheme is first-order accurate in time for an abstract finite-dimensional gradient system, which, in our setting, serves as an intermediate, exact spatial discretization. This formulation also avoids imposing an artificial global Lipschitz assumption on the unbounded Laplacian in the continuum Allen--Cahn equation.

\begin{theorem}[First-order temporal convergence for the discrete gradient system]
\label{thm:first_order}
Let $\X_h\simeq\mathbb{R}^N$ be equipped with a discrete inner product, and let $\E_h\in C^1(\X_h)$ be the corresponding discrete energy. Let $\ph_h\in C^2([0,T];\X_h)$ solve
\[
\partial_t\phi_h(t)=-\grad\E_h(\phi_h(t)).
\]
Assume that $\grad\E_h$ is Lipschitz continuous on a set $B_h\subset\X_h$ that contains the exact trajectory $\{\phi_h(t):0\le t\le T\}$ and the discrete iterates $\{\phi_h^n\}$,
\[
\norm{\grad\E_h(u)-\grad\E_h(w)} \le L_h\,\norm{u-w}
\qquad \text{for all } u,w\in B_h.
\]
Let $\{\phi_h^n\}$ be generated from $\phi_h^0=\phi_h(0)$ by the discrete minimizing-movements step
\[
\phi_h^{n+1}\in
\argmin_{\phi\in\X_h}
\left\{
\frac{1}{2\tau}\norm{\phi-\phi_h^n}^2+\E_h(\phi)
\right\},
\]
assuming that a minimizer exists at every step. Then, for every $\tau>0$ satisfying $\tau L_h\le 1/2$,
\begin{equation}
\max_{0\le n\le \lfloor T/\tau\rfloor}\,
\norm{\phi_h(t_n)-\phi_h^n}
\le
\frac{M_{2,h}\,T\,e^{2L_hT}}{2}\,\tau,
\qquad
M_{2,h}:=\sup_{t\in[0,T]}\norm{\partial_{tt}\phi_h(t)} .
\label{eq:first-order-bound}
\end{equation}
\end{theorem}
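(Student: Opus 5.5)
The plan is to reduce the variational step to the implicit Euler scheme and then run a standard consistency-plus-stability argument. Only the Lipschitz bound on $B_h$ is used, never a global one.

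\textbf{Step 1 (Euler--Lagrange equation).} Since $\X_h\simeq\mathbb{R}^N$ and the minimization is unconstrained over the whole space, $\phi_h^{n+1}$ is an interior minimizer of the $C^1$ functional $\phi\mapsto\frac{1}{2\tau}\norm{\phi-\phi_h^n}^2+\E_h(\phi)$. Its derivative must therefore vanish. Identifying the derivative with a gradient through the same discrete inner product that defines $\grad\E_h$ gives the implicit Euler relation
\[
\phi_h^{n+1}=\phi_h^n-\tau\,\grad\E_h(\phi_h^{n+1}).
\]
The existence of a minimizer is assumed, and no uniqueness is needed, because the argument below applies to any choice of minimizer.

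\textbf{Step 2 (consistency).} Write $t_n=n\tau$. Since $\partial_t\phi_h(t_{n+1})=-\grad\E_h(\phi_h(t_{n+1}))$, the exact solution satisfies
\[
\phi_h(t_{n+1})=\phi_h(t_n)-\tau\,\grad\E_h(\phi_h(t_{n+1}))+r_n,
\qquad
r_n:=\phi_h(t_{n+1})-\phi_h(t_n)-\tau\,\partial_t\phi_h(t_{n+1}).
\]
A Taylor expansion about $t_{n+1}$ with integral remainder, $r_n=\int_{t_n}^{t_{n+1}}(s-t_n)\,\partial_{tt}\phi_h(s)\,\dd s$, gives $\norm{r_n}\le M_{2,h}\tau^2/2$.

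\textbf{Step 3 (stability and induction).} Let $e_n=\phi_h(t_n)-\phi_h^n$, so that $e_0=0$. Subtracting the two relations gives
\[
e_{n+1}=e_n-\tau\bigl(\grad\E_h(\phi_h(t_{n+1}))-\grad\E_h(\phi_h^{n+1})\bigr)+r_n.
\]
Both arguments lie in $B_h$, so the Lipschitz hypothesis yields
\[
(1-\tau L_h)\norm{e_{n+1}}\le\norm{e_n}+\frac{M_{2,h}\tau^2}{2}.
\]
For $0\le x\le 1/2$ we have $1/(1-x)\le 1+2x\le e^{2x}$, so the condition $\tau L_h\le1/2$ gives
\[
\norm{e_{n+1}}\le e^{2L_h\tau}\Bigl(\norm{e_n}+\frac{M_{2,h}\tau^2}{2}\Bigr).
\]
Unrolling from $e_0=0$ gives
\[
\norm{e_n}\le\frac{M_{2,h}\tau^2}{2}\sum_{k=1}^{n}e^{2L_hk\tau}\le\frac{M_{2,h}\tau^2}{2}\,n\,e^{2L_ht_n}.
\]
For $n\le\lfloor T/\tau\rfloor$ we have $n\tau\le T$ and $t_n\le T$, which yields \eqref{eq:first-order-bound}.

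\textbf{Main obstacle.} The only delicate point is the implicit step in Step 3. The term $\tau L_h\norm{e_{n+1}}$ appears on the right-hand side and must be absorbed into the left, which is exactly where $\tau L_h\le1/2$ enters and produces the factor $e^{2L_hT}$ rather than $e^{L_hT}$. A second point is that the Lipschitz estimate is applied only to pairs consisting of a point on the exact trajectory and a discrete iterate, both of which lie in $B_h$ by hypothesis. This is what removes any need for a global Lipschitz bound on $\grad\E_h$.
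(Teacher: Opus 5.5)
Your proposal is correct and follows the paper's proof essentially line for line: the Euler--Lagrange equation identifies the step as implicit Euler, you Taylor-expand about $t_{n+1}$, you absorb $\tau L_h\norm{e_{n+1}}$ using $(1-\tau L_h)^{-1}\le e^{2\tau L_h}$, and you unroll from $e_0=0$. One cosmetic slip is that your definition of $r_n$ actually gives $r_n=-\int_{t_n}^{t_{n+1}}(s-t_n)\,\partial_{tt}\phi_h(s)\,\dd s$, but since only $\norm{r_n}\le M_{2,h}\tau^2/2$ is used, nothing changes.
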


\begin{proof}

The Euler--Lagrange equation of the discrete step,
\begin{equation}
\frac{\phi_h^{n+1}-\phi_h^n}{\tau}
+\grad\E_h(\phi_h^{n+1})=0,
\label{eq:euler-lagrange}
\end{equation}
identifies the scheme as implicit Euler.

Since $\phi_h^{n+1}$ minimizes the differentiable discrete functional over $\X_h$, it is a critical point, and the Euler--Lagrange equation \eqref{eq:euler-lagrange} holds exactly.

A Taylor expansion with integral remainder about $t_{n+1}$ gives
\[
\phi_h(t_n)=\phi_h(t_{n+1})-\tau\,\partial_t\phi_h(t_{n+1})+R_h^n,
\qquad
\norm{R_h^n}\le\frac{\tau^2}{2}\,M_{2,h},
\]
and substituting $\partial_t\phi_h(t_{n+1})=-\grad\E_h(\phi_h(t_{n+1}))$ yields
\[
\phi_h(t_{n+1})
=
\phi_h(t_n)-\tau\,\grad\E_h(\phi_h(t_{n+1}))-R_h^n .
\]

Set $\xi^n:=\phi_h(t_n)-\phi_h^n$. Subtracting \eqref{eq:euler-lagrange} from the identity above gives
\[
\xi^{n+1}
=
\xi^n
-\tau\Bigl(\grad\E_h(\phi_h(t_{n+1}))
-\grad\E_h(\phi_h^{n+1})\Bigr)
-R_h^n .
\]
The Lipschitz bound on $B_h$ then gives
\[
\norm{\xi^{n+1}}
\le
\norm{\xi^n}+\tau L_h\norm{\xi^{n+1}}
+\frac{\tau^2}{2}M_{2,h}.
\]
For $\tau L_h\le 1/2$, we have $(1-\tau L_h)^{-1}\le 1+2\tau L_h\le e^{2\tau L_h}$, hence
\[
\norm{\xi^{n+1}}
\le
e^{2\tau L_h}
\Bigl(\norm{\xi^n}+\frac{\tau^2}{2}M_{2,h}\Bigr).
\]
Since $\xi^0=0$, iterating gives, for $n\tau\le T$,
\[
\norm{\xi^n}
\le
\frac{\tau^2}{2}M_{2,h}\sum_{k=1}^{n} e^{2k\tau L_h}
\le
\frac{\tau^2}{2}M_{2,h}\,n\,e^{2L_hT}
\le
\frac{M_{2,h}\,T\,e^{2L_hT}}{2}\,\tau,
\]
which is \eqref{eq:first-order-bound}.
\end{proof}

\subsection{Mass conservation and the discrete \texorpdfstring{$H^{-1}$}{H-1} metric for Cahn--Hilliard flows}
\label{subsec:ch-metric}

For the Cahn--Hilliard equation with constant mobility $M>0$, the abstract scheme \eqref{eq:minimizing_movement} covers conserved dynamics once the state space and the metric are chosen accordingly:
\begin{equation}
\partial_t \phi = M\Delta\mu,
\qquad
\mu = \frac{\delta \E}{\delta \phi} = -\varepsilon^2\Delta\phi + F'(\phi),
\label{eq:ch-pde}
\end{equation}
posed on a periodic domain $\Omega$ with the Ginzburg--Landau energy
\begin{equation}
\E(\phi)=\int_\Omega \Bigl(\frac{\varepsilon^2}{2}|\grad\phi|^2 + F(\phi)\Bigr)\dd x,
\qquad
F(\phi)=\frac14(\phi^2-1)^2 .
\label{eq:ch-energy}
\end{equation}
Integrating \eqref{eq:ch-pde} over $\Omega$ shows that the total mass is conserved,
\begin{equation}
\frac{\dd}{\dd t}\int_\Omega \phi\,\dd x = \int_\Omega M\Delta\mu\,\dd x = 0,
\end{equation}
so the natural state space is the affine mass manifold
\begin{equation}
\mathcal{M}_{m_0} = \Bigl\{\phi : \int_\Omega \phi\,\dd x = m_0|\Omega|\Bigr\},
\qquad
m_0 = \frac{1}{|\Omega|}\int_\Omega \phi_0\,\dd x .
\end{equation}
The minimizing-movements step for Cahn--Hilliard is the constrained problem
\begin{equation}
\phi^{n+1}\in\argmin_{\phi\in\mathcal{M}_{m_0}}
\Bigl\{\frac{1}{2M\tau}\norm{\phi-\phi^n}_{H^{-1}}^2 + \E(\phi)\Bigr\},
\label{eq:ch-minmove}
\end{equation}
where the constant mobility is absorbed into the metric as the factor $1/M$. Proposition~\ref{prop:energy_dissipation} applies verbatim, since its proof uses only the Hilbert-space structure and the competitor $\phi=\phi^n\in\mathcal{M}_{m_0}$. The scheme therefore dissipates the energy unconditionally and, by construction, conserves the mass exactly at every step.

The metric term in \eqref{eq:ch-minmove} requires the $H^{-1}$ norm of the mean-zero increment $\phi-\phn$. This norm is characterized through a Poisson problem. For a mean-zero function $f$, define $\psi_f$ by
\begin{equation}
-\Delta\psi_f = f \ \text{in } \Omega,
\qquad
\int_\Omega \psi_f\,\dd x = 0,
\end{equation}
with periodic boundary conditions. Then $\psi_f=(-\Delta)^{-1}f$ and
\begin{equation}
\norm{f}_{H^{-1}}^2
= \int_\Omega f\,\psi_f\,\dd x
= \int_\Omega |\grad\psi_f|^2\,\dd x .
\end{equation}

\subsection{Neural-network approximation and the baseline EVNN}
\label{sec:nn_approximation}

We approximate the solution $\ph$ by a coordinate network. At a given time level, the state is represented as a function of the spatial coordinates alone,
\begin{equation}
\phNN(x) = \mathrm{NN}_\theta(x), \qquad x\in\Omega\subset\mathbb{R}^d,
\end{equation}
where $\theta$ denotes the network parameters. Retraining the network at every time step is inherent to this sequential representation, and the schemes below exploit this structure.

The baseline EVNN method~\cite{Hu2024EVNN} uses a coordinate-based residual network (ResNet) with tanh activations to approximate $\ph$. At each time step, we solve the minimizing-movements problem
\begin{equation}
\theta^{n+1} \in \argmin_\theta \left\{
\E(\phNN) + \frac{1}{2\tau}\norm{\phNN - \phn}^2
\right\}
\end{equation}
using limited-memory Broyden--Fletcher--Goldfarb--Shanno (L-BFGS) optimization. Here $\norm{\cdot}$ denotes the norm on the underlying space $\X$, namely the $L^2$ norm for Allen--Cahn and the mobility-weighted $H^{-1}$ norm of Section~\ref{subsec:ch-metric} for Cahn--Hilliard. The initial condition is fitted via
\begin{equation}
\theta^0 \in \argmin_\theta \norm{\phNN - \ph_0}_{L^2}^2.
\end{equation}

For the Cahn--Hilliard step, the mass constraint in \eqref{eq:ch-minmove} is enforced explicitly via a mean-value correction to the network output. The raw output $\mathrm{NN}_\theta$ is replaced by $\phNN=\mathrm{NN}_\theta - \overline{\mathrm{NN}_\theta} + m_0$, where the overline denotes the mean over the quadrature grid. This projects the sampled ansatz onto the discrete counterpart of $\mathcal{M}_{m_0}$ exactly, and the continuum mass is conserved to the accuracy of the quadrature rule. The baseline EVNN loss for the Cahn--Hilliard step is then the sampled version of
\begin{equation}
\mathcal{L}(\theta)
= \frac{1}{2M\tau}\int_\Omega (\phNN-\phn)\,\psi_\theta\,\dd x + \E(\phNN),
\qquad
-\Delta\psi_\theta = \phNN-\phn,
\quad
\int_\Omega \psi_\theta\,\dd x = 0,
\end{equation}
which is the restriction of \eqref{eq:ch-minmove} to the network manifold.

\begin{remark}[Total error and the limits of the temporal estimate]
\label{rem:total-error}
Theorem~\ref{thm:first_order} controls the temporal error of exact minimizing-movements iterates for a fixed spatial discretization. To distinguish the other contributions, let $R_h\phi(t_n)$ denote the exact PDE solution sampled on the comparison grid, let $\phi_h(t_n)$ be the exact solution of a compatible semidiscrete gradient system, let $\phi_h^n$ be its exact time-discrete iterate, and let $\phi_{\theta,h}^{\,n}$ be the sampled network iterate. The triangle inequality gives
\begin{equation}
\begin{aligned}
\norm{R_h\phi(t_n)-\phi_{\theta,h}^{\,n}}_h
\le{}\underbrace{\norm{R_h\phi(t_n)-\phi_h(t_n)}_h}_{\text{spatial discretization}}
+\underbrace{\norm{\phi_h(t_n)-\phi_h^n}_h}_{\text{temporal discretization}}
+\underbrace{\norm{\phi_h^n-\phi_{\theta,h}^{\,n}}_h}_{\text{network approximation}}.
\end{aligned}
\label{eq:total-error}
\end{equation}
Here $\norm{\cdot}_h$ denotes the discrete $L^2$ comparison norm. At fixed spatial resolution, equivalence of finite-dimensional norms transfers the first-order estimate in the flow metric to this comparison norm, so Theorem~\ref{thm:first_order} bounds the middle term for the first-order scheme. 
\end{remark}
\section{Variable-scaled EVNN (VS-EVNN)}

The baseline EVNN used in Section~\ref{sec:nn_approximation} advances the solution by one first-order minimizing-movements solve per time step. Here we implement the multi-stage variational-extrapolation construction. 

\subsection{Variable scaling of the network coordinates}
\label{subsec:variable-scaling}\label{subsec:scaled-network-class}
Phase-field solutions vary across transition layers whose width is set by the coefficient of the gradient term in the energy. Defined directly on the physical domain, a coordinate network must produce order-one changes in the state over regions much narrower than the domain itself. At the same time, its Xavier-initialized first layer generates features that vary on the domain scale. The VS-EVNN solver therefore changes the coordinates presented to the network. For a fixed center \(c\) and a fixed scale \(s>0\), the trial functions are
\begin{equation}
U_\theta(x)=\mathrm{NN}_\theta(\bar{x}),
\qquad
\bar{x}=\frac{x-c}{s},
\label{eq:scaled-coordinate-network}
\end{equation}
where \(\mathrm{NN}_\theta\) is the coordinate network of Section~\ref{sec:nn_approximation}, realized in our experiments as a fully connected residual network. The same \(c\) and \(s\) are used at every time step and stage. The updated-state network \(U_\theta\) plays the role of the network iterate \(\phi_\theta\) of Section~\ref{sec:nn_approximation}. We switch to the symbol \(U\) here because the multi-stage schemes below carry several intermediate states per time step. The implementation takes \(s=\sqrt{\kappa}\), where \(\kappa\) is the coefficient of \(\frac12|\grad\phi|^2\) in the energy. Every test problem in this paper has \(\kappa=\varepsilon^2\), so \(s\) equals the interface parameter \(\varepsilon\). For the one-dimensional Allen--Cahn problem, this gives \(s=\sqrt{10^{-3}}\approx3.2\times10^{-2}\). The remaining values of \(\varepsilon\) are listed with each problem. The center is \(c=0\) on \([-1,1]\) and \(c=(\pi,\pi)\) on \([0,2\pi)^2\).

A change of variables in the energy explains this choice of scale. Consider, in \(d\) spatial dimensions,
\[
\E(\phi)=\int_{\Omega}\left[\frac{\kappa}{2}|\grad_x\phi|^2+F(\phi)\right]\dd x,
\qquad
\phi(x)=\psi(\bar{x}),
\qquad
x=c+s\bar{x}.
\]
Since \(\grad_x\phi=s^{-1}\grad_{\bar{x}}\psi\) and \(\dd x=s^d\,\dd\bar{x}\), we obtain
\begin{equation}
\E(\phi)
=s^d\int_{\bar\Omega}
\left[
\frac{\kappa}{2s^2}\,|\grad_{\bar{x}}\psi|^2
+F(\psi)
\right]\dd\bar{x},
\qquad
\bar\Omega=\frac{\Omega-c}{s}.
\label{eq:scaled-energy}
\end{equation}
Choosing \(s=\sqrt{\kappa}\) gives the gradient term of the transformed integrand a unit coefficient. In the scaled coordinates, a transition layer of physical width proportional to \(\sqrt{\kappa}\) has order-one width, so the network approximates a profile with order-one derivatives.

The solver samples the physical coordinates \(x\), evaluates the physical energy, the movement penalties, and the quadrature weights on the physical grid, and automatic differentiation applies the chain-rule factor \(1/s\) through \eqref{eq:scaled-coordinate-network} wherever \(\grad_x U_\theta\) is required.

\begin{proposition}[Scaled and unscaled coordinate networks coincide]
\label{prop:equiv-coordinate}
Fix \(s>0\) and \(c\in\mathbb{R}^d\), and let the first layer of \(\mathrm{NN}_\theta\) be an affine map \(z\mapsto Wz+b\) with unconstrained weights \(W\) and bias \(b\). Then the trial classes
\[
\mathcal{A}
=\bigl\{x\mapsto \mathrm{NN}_\theta(x)\bigr\}
\qquad\text{and}\qquad
\mathcal{A}_{s,c}
=\Bigl\{x\mapsto \mathrm{NN}_\theta\bigl(\tfrac{x-c}{s}\bigr)\Bigr\}
\]
contain the same functions.
\end{proposition}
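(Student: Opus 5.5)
The plan is to show both inclusions \(\mathcal{A}_{s,c}\subseteq\mathcal{A}\) and \(\mathcal{A}\subseteq\mathcal{A}_{s,c}\) by an explicit reparametrization of the first layer alone. Write \(\mathrm{NN}_\theta = G_{\theta'}\circ L_{W,b}\), where \(L_{W,b}(z)=Wz+b\) is the first affine layer and \(G_{\theta'}\) collects every subsequent operation: the first activation, the residual blocks, and the output layer. The central observation is that the coordinate map \(x\mapsto (x-c)/s\) is itself affine and invertible. Composing it with the first layer therefore only produces another affine map. Because \(W\) and \(b\) are unconstrained, that new affine map is again an admissible first layer.

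For \(\mathcal{A}_{s,c}\subseteq\mathcal{A}\), take any \(\theta=(W,b,\theta')\) and compute
\[
\mathrm{NN}_\theta\Bigl(\tfrac{x-c}{s}\Bigr)
= G_{\theta'}\Bigl(\tfrac{1}{s}Wx+\bigl(b-\tfrac{1}{s}Wc\bigr)\Bigr)
= \mathrm{NN}_{\tilde\theta}(x),
\]
with \(\tilde\theta=(W/s,\ b-Wc/s,\ \theta')\). For the reverse inclusion, start from \(\theta=(W,b,\theta')\) and set \(\hat\theta=(sW,\ b+Wc,\ \theta')\). Then
\[
\mathrm{NN}_{\hat\theta}\Bigl(\tfrac{x-c}{s}\Bigr)
= G_{\theta'}\bigl(W(x-c)+b+Wc\bigr)
= \mathrm{NN}_\theta(x).
\]
These identities hold for every \(x\in\mathbb{R}^d\), so the two sets of functions are equal. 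The maps \(\theta\mapsto\tilde\theta\) and \(\theta\mapsto\hat\theta\) are mutually inverse linear bijections of parameter space.

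The only point needing care is the residual architecture. One must check that the input \(z\) enters \(\mathrm{NN}_\theta\) only through the first affine layer. If some skip connection fed the raw input \(z\) directly into a later block, the factorization through \(L_{W,b}\) would fail. I would state explicitly that in the residual network of Section~\ref{sec:nn_approximation} the skips act on hidden states after the input layer. Under that convention, which is implicit in the hypothesis of the proposition, the argument above is complete.

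I would also add a remark that the equivalence concerns only the trial classes. Xavier initialization and the optimization dynamics differ between the two parametrizations, since \(W\) is rescaled by \(s\). This is why the scaling can still matter in practice, even though it adds no approximation power.
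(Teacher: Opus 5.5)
Your proof is correct and is essentially the paper's argument. The paper also absorbs the affine coordinate map into the first layer, via $W_s=sW_0$, $b_s=b_0+W_0c$ and the inverse $W_0=W_s/s$, $b_0=b_s-W_sc/s$. These are exactly your $\hat\theta$ and $\tilde\theta$, with all later layers shared. Your explicit caveat that the raw input must enter only through the first affine layer is a useful clarification: it spells out what the paper's phrase ``all subsequent layers are shared'' implicitly assumes.
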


\begin{proof}
Given an unscaled first layer \(W_0x+b_0\), the scaled first layer with \(W_s=sW_0\) and \(b_s=b_0+W_0c\) satisfies \(W_s\frac{x-c}{s}+b_s=W_0x+b_0\). Conversely, a scaled first layer \(W_s\frac{x-c}{s}+b_s\) equals the unscaled layer with \(W_0=W_s/s\) and \(b_0=b_s-W_sc/s\). All subsequent layers are shared.
\end{proof}

The implementation pairs the input scaling with a compensating initialization. After Xavier initialization of the base network, the first-layer weights are multiplied by \(s\). The initial preactivations at input magnitudes of \(1/s\) then remain at the scale for which the initialization was designed, avoiding additional saturation caused solely by the input rescaling. During training, the first layer is unconstrained, so Proposition~\ref{prop:equiv-coordinate} still applies to the representable set, but the optimizer follows a different path in parameter space because the reparametrization changes the gradient geometry as seen by L-BFGS.

\subsection{High-order multi-stage schemes} The minimizing-movements update of Section~\ref{subsec:minimizing-movements} is only first-order accurate in time. To raise the temporal order while retaining the variational structure, we adopt the \emph{variational extrapolation} framework of Zaitzeff, Esedo\u{g}lu, and Garikipati~\cite{Zaitzeff2020}, which composes several minimizing-movements stages per time step and penalizes the distance of each stage to all previously computed stages through quadratic movement-limiter terms. Our networks minimize the full energy $\E$ implicitly at every stage, so the fully implicit framework of~\cite{Zaitzeff2020} is the relevant one here. Each stage requires the same type of updated-state solve as the baseline scheme, a minimization of the energy plus quadratic movement terms. VS-EVNN uses the scaled-coordinate state network of Section~\ref{subsec:variable-scaling} for every stage problem.

Definition~\ref{def:k_stage} and Theorem~\ref{thm:multistage_stability} below are imported from~\cite{Zaitzeff2020}, while Theorem~\ref{thm:high_order} restates the formal one-step consistency calculation of~\cite[Proposition~3.1 and \S 4]{Zaitzeff2020} under the smoothness and exact-solve hypotheses listed in the theorem. The additions specific to the present work are the network form of the stage solves in \eqref{eq:network-stage-loss}, and the analysis of the stability guarantee under inexact network minimization in Remark~\ref{rem:stability-network}.

\begin{definition}[$k$-Stage Variational Scheme {\cite{Zaitzeff2020}}]\label{def:k_stage}
Starting from $U_0=\phi^n$, a $k$-stage scheme computes intermediate states $U_1,\ldots,U_k$ via the variational problems
\begin{equation}
U_m
\in
\argmin_{\phi\in\X}
\left(
\E(\phi)
+
\sum_{i=0}^{m-1}
\frac{\gamma_{m,i}}{2\tau}
\norm{\phi-U_i}^2
\right),
\qquad m=1,\ldots,k,
\label{eq:k_stage_problem}
\end{equation}
and the updated solution is taken as $\phi^{n+1}=U_k$. The lower-triangular matrix $\gamma=(\gamma_{m,i})$ collects the scheme coefficients. The column index is zero-based, so row $m$ of a displayed matrix lists $\gamma_{m,0},\ldots,\gamma_{m,m-1}$.
\end{definition}

Throughout this subsection, we write
\begin{equation}
S_m := \sum_{i=0}^{m-1}\gamma_{m,i}, \qquad m=1,\ldots,k,
\label{eq:row-sum}
\end{equation}
for the row sums of $\gamma$. The coefficient matrices used in this work have $S_m>0$, which makes each stage objective coercive in $\phi$ for the energies considered here, since the Ginzburg--Landau energy is bounded below. The quadratic terms jointly grow like $\tfrac{S_m}{2\tau}\norm{\phi}^2$ as $\norm{\phi}\to\infty$. Individual coefficients $\gamma_{m, i}$ may be negative. A stage may carry a negative weight on the distance to one earlier state, as long as the remaining anchors penalize movement strongly enough~\cite{Zaitzeff2020}.

The implementation directly minimizes the multi-anchor stage objective in \eqref{eq:k_stage_problem}, and this network realization of the stage solver is specific to the present work. Stage $m$ is represented by its own scaled-coordinate state network $U_{m,\theta_m}$, warm-started from the preceding stage, and the earlier stages enter the loss only through their values on the quadrature grid, detached from the differentiation graph. The sampled stage loss is
\begin{equation}
\mathcal{L}_{m,h}(\theta_m)
=
\E_h(\widetilde U_{m,\theta_m})
+
\sum_{i=0}^{m-1}
\frac{\gamma_{m,i}}{2\tau}
\norm{\widetilde U_{m,\theta_m}-\widetilde U_i}_{\X,h}^{2}
+
P_{\mathrm{BC},h}(\widetilde U_{m,\theta_m}),
\label{eq:network-stage-loss}
\end{equation}
where $\E_h$ is the sampled energy and $\norm{\cdot}_{\X,h}$ is the discrete counterpart of the flow metric on the quadrature grid, that is, the grid $L^2$ norm for the Allen--Cahn flows and, for the Cahn--Hilliard flows, the mobility-weighted discrete $H^{-1}$ norm $\norm{v}_{\X,h}^2=\frac{1}{M}\norm{v}_{H^{-1},h}^2$, computed by inverting the discrete Laplacian symbol in Fourier space with the zero mode excluded. Here $\widetilde U=U$ in the Allen--Cahn cases while $\widetilde U$ denotes the mean-projected state of Section~\ref{sec:nn_approximation} for Cahn--Hilliard, and the boundary penalty $P_{\mathrm{BC},h}$ is present only in the one-dimensional Allen--Cahn driver, where it is a weight $\lambda_{\mathrm{BC}}$ times the mean squared mismatch between the network trace and the boundary data over the two endpoint nodes. The stage problems are solved in this raw updated-state form. The accepted final-stage network becomes the network of the new time level, so the representation does not grow with the number of time steps.

A central result of~\cite{Zaitzeff2020} is that the multi-stage construction preserves the unconditional energy dissipation of the single-stage scheme established in Proposition~\ref{prop:energy_dissipation}, provided an explicit algebraic condition on $\gamma$ holds, as stated in the following theorem.

\begin{theorem}[Unconditional energy stability {\cite[Theorem~2.1]{Zaitzeff2020}}]
\label{thm:multistage_stability}
Given the coefficients $\gamma_{m,i}$ of Definition~\ref{def:k_stage}, define auxiliary quantities by the backward recursion (from $m=k$ down to $m=1$)
\begin{equation}
\tilde\gamma_{m,i} = \gamma_{m,i} - \sum_{j=m+1}^{k} \tilde\gamma_{j,i}\,\frac{\tilde S_{j,m}}{\tilde S_{j,j}},
\qquad
\tilde S_{j,m} = \sum_{i=0}^{m-1}\tilde\gamma_{j,i},
\label{eq:stability-aux}
\end{equation}
where the sum over $j$ is understood to be empty for $m=k$. If
\begin{equation}
\tilde S_{m,m} > 0 \qquad \text{for } m=1,\ldots,k,
\label{eq:stability-cond}
\end{equation}
then the $k$-stage scheme is unconditionally energy-stable. For every $\tau>0$ and every $n\ge 0$,
\begin{equation}
\E(\phi^{n+1}) \le \E(\phi^n),
\end{equation}
provided each stage minimization in \eqref{eq:k_stage_problem} is solved exactly.
\end{theorem}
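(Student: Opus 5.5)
The strategy is the competitor argument behind Proposition~\ref{prop:energy_dissipation}, applied once per stage, followed by an algebraic elimination that makes the movement terms combine into a nonnegative quantity. The recursion \eqref{eq:stability-aux} is exactly the bookkeeping of that elimination.

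For each stage $m$, exactness of the solve lets us compare $U_m$ with the competitor $\phi=U_{m-1}$ in \eqref{eq:k_stage_problem}. This gives
\[
\E(U_m)+\sum_{i=0}^{m-1}\frac{\gamma_{m,i}}{2\tau}\norm{U_m-U_i}^2\le \E(U_{m-1})+\sum_{i=0}^{m-2}\frac{\gamma_{m,i}}{2\tau}\norm{U_{m-1}-U_i}^2 .
\]
Summing over $m=1,\dots,k$ telescopes the energies to $\E(U_k)-\E(U_0)$. It leaves a quadratic form $Q(U_0,\dots,U_k)$ in the pairwise distances, which must be shown nonnegative. The sign of individual $\gamma_{m,i}$ cannot be used here, since they may be negative.

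A cruder competitor inequality does not suffice, so I would also use the Euler--Lagrange equations. Exact minimizers are critical points, so
\[
\grad\E(U_m)=-\tau^{-1}\sum_{i<m}\gamma_{m,i}(U_m-U_i).
\]
With these relations, one can write $U_m$ in terms of $U_{m-1}$, the earlier stages and the gradient term. Hilbert-space identities such as $\norm{a-b}^2=\norm{a}^2-2\ip{a}{b}+\norm{b}^2$, applied with all states recentered at $U_{m-1}$, then let one rewrite the distance terms of later stages.

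The main step is an induction from $m=k$ downward. At stage $j$, the anchor terms involving $U_m$ with $m<j$ are replaced by projections: the weighted combination $\sum_{i<j}\tilde\gamma_{j,i}U_i$ is split into its part through $U_m$, with weight $\tilde S_{j,m}/\tilde S_{j,j}$, and the remainder. This produces the modified coefficients $\tilde\gamma_{m,i}=\gamma_{m,i}-\sum_{j>m}\tilde\gamma_{j,i}\tilde S_{j,m}/\tilde S_{j,j}$.

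After the elimination, $Q$ should reduce to a sum of completed squares of the form $\frac{\tilde S_{m,m}}{2\tau}\norm{U_m-\bar U_m}^2$, where $\bar U_m$ is the $\tilde\gamma$-weighted average of the earlier stages. Under \eqref{eq:stability-cond} each such term is nonnegative, which gives $\E(U_k)\le\E(U_0)$, i.e.\ $\E(\phi^{n+1})\le\E(\phi^n)$ for every $\tau>0$.

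I expect the main obstacle to be verifying that the completion of squares matches the backward recursion exactly, with no leftover cross terms. I would first check the identity for $k=2$ by hand and then formalize the induction. Since the theorem is imported from \cite[Theorem~2.1]{Zaitzeff2020}, I would ultimately follow that paper's computation for the general case.
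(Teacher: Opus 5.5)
Your skeleton matches the argument the paper relies on: compare stage $m$ with the competitor $U_{m-1}$, telescope, then show the leftover form $Q$ is nonnegative by completing squares from $U_k$ downward with pivots $\tilde S_{m,m}$. Your guessed end form of $Q$ is also correct. There are two problems: the step ``a cruder competitor inequality does not suffice, so I would also use the Euler--Lagrange equations'' is wrong, and the key identity is left undone.

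On the first point, the summed competitor inequalities already give $\E(U_k)+\frac{1}{2\tau}Q\le\E(U_0)$, with $Q$ depending only on the stages. Condition \eqref{eq:stability-cond} makes $Q$ nonnegative for \emph{every} configuration $(U_0,\dots,U_k)$, not just for minimizers. Bringing in $\grad\E(U_m)$ has three costs. It needs a differentiability assumption that the statement, with $\E:\X\to(-\infty,+\infty]$, does not make. It introduces terms $\ip{\grad\E(U_m)}{\cdot}$ that have no sign for the nonconvex double-well. And it destroys what Remark~\ref{rem:stability-network} depends on: only $J_m(U_m)\le J_m(U_{m-1})$ enters, which is why the bound survives inexact network minimization. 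Drop that step.

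The identity you defer to the reference can be verified directly, with no induction. Set $\bar U_m:=\tilde S_{m,m}^{-1}\sum_{i<m}\tilde\gamma_{m,i}U_i$; the claim is
\[
Q=\sum_{m=1}^{k}\Bigl[\sum_{i<m}\gamma_{m,i}\norm{U_m-U_i}^2-\sum_{i<m-1}\gamma_{m,i}\norm{U_{m-1}-U_i}^2\Bigr]
=\sum_{m=1}^{k}\tilde S_{m,m}\norm{U_m-\bar U_m}^2 .
\]
Both sides are quadratic forms in $(U_0,\dots,U_k)$ that are invariant under a common translation, since the weights in $\bar U_m$ sum to one. Hence their coefficient matrices have zero row sums, and it suffices to match the coefficient of $\ip{U_m}{U_i}$ for $i<m$.

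\begin{itemize}
\item On the left, half of that coefficient is $\gamma_{m+1,i}-\gamma_{m,i}$, with $\gamma_{k+1,\cdot}:=0$.
\item On the right, it is $-\tilde\gamma_{m,i}+\sum_{j>m}\tilde\gamma_{j,m}\tilde\gamma_{j,i}/\tilde S_{j,j}$.
\item Substitute \eqref{eq:stability-aux} for $\tilde\gamma_{m,i}$ and use $\tilde S_{j,m}+\tilde\gamma_{j,m}=\tilde S_{j,m+1}$. The right side becomes $-\gamma_{m,i}+\tilde\gamma_{m+1,i}+\sum_{j>m+1}\tilde\gamma_{j,i}\tilde S_{j,m+1}/\tilde S_{j,j}$.
\item Applying \eqref{eq:stability-aux} at row $m+1$ collapses the last two terms to $\gamma_{m+1,i}$.
\end{itemize}

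So under \eqref{eq:stability-cond} you get $Q\ge 0$ and the theorem, in the sharper form $\E(U_k)+\frac{1}{2\tau}\sum_m\tilde S_{m,m}\norm{U_m-\bar U_m}^2\le\E(U_0)$.

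One small slip in your description: $\tilde S_{j,m}/\tilde S_{j,j}$ is the weight that $\bar U_j$ places on $U_0,\dots,U_{m-1}$, not on the stages ``through $U_m$''.
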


Condition \eqref{eq:stability-cond} can be checked for any candidate matrix by running the recursion \eqref{eq:stability-aux} backward. Both coefficient matrices used in this work satisfy it. We verified this numerically from the tabulated entries of the three-stage second-order matrix and the six-stage third-order matrix in Table~\ref{tab:high_order_coefficients}.

Separate algebraic conditions on $\gamma$ govern the order of accuracy, analogous to Runge--Kutta order conditions. Taylor expansion of the stage Euler--Lagrange equations shows that each stage admits an expansion of the form $U_m=U_0-\beta_{1,m}\,\tau\,\grad\E(U_0)+O(\tau^2)$, whose coefficients $\beta_{j,m}$ obey the recursion (with $\beta_{j,0}=0$ and the row sums $S_m$ of \eqref{eq:row-sum})
\begin{subequations}\label{eq:order-recursion}
\begin{equation}
\beta_{1,m}=\frac{1}{S_m}\Bigl[1+\sum_{i=1}^{m-1}\gamma_{m,i}\beta_{1,i}\Bigr],
\qquad
\beta_{2,m}=\frac{1}{S_m}\Bigl[\beta_{1,m}+\sum_{i=1}^{m-1}\gamma_{m,i}\beta_{2,i}\Bigr],
\label{eq:order-recursion-a}
\end{equation}
\begin{equation}
\beta_{3,m}=\frac{1}{S_m}\Bigl[\beta_{2,m}+\sum_{i=1}^{m-1}\gamma_{m,i}\beta_{3,i}\Bigr],
\qquad
\beta_{4,m}=\frac{1}{S_m}\Bigl[\frac{\beta_{1,m}^2}{2}+\sum_{i=1}^{m-1}\gamma_{m,i}\beta_{4,i}\Bigr].
\label{eq:order-recursion-b}
\end{equation}
\end{subequations}
The scheme is first-order accurate if and only if $\beta_{1,k}=1$. It is second-order accurate if, in addition, $\beta_{2,k}=1/2$. It is third-order accurate if, further, $\beta_{3,k}=\beta_{4,k}=1/6$.

\begin{theorem}[Formal one-step consistency for exact stage solves {\cite[Proposition~3.1 and \S 4]{Zaitzeff2020}}]
\label{thm:high_order}
Let $U_0=\phn$, let $\phi$ denote the exact solution of the gradient flow with $\phi(t_n)=U_0$, and assume that $\phi$ and the energy $\E$ are sufficiently smooth and that each stage minimization in Definition~\ref{def:k_stage} is solved exactly along a smooth, locally unique stage branch. If the coefficient matrix $\gamma=(\gamma_{m, i})$ satisfies the stability condition \eqref{eq:stability-cond} and the order conditions above through second order, then the one-step error satisfies $\norm{U_k-\phi(t_n+\tau)}=O(\tau^{3})$ with $k=3$ stages. If $\gamma$ satisfies the order conditions through third order, then $\norm{U_k-\phi(t_n+\tau)}=O(\tau^{4})$ with $k=6$ stages. The corresponding exact-stage schemes therefore have formal temporal orders of 2 and 3, respectively.
\end{theorem}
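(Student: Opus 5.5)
The plan is a formal Taylor expansion in \(\tau\) of the stage Euler--Lagrange equations, compared with the Taylor expansion of the exact flow. Write \(g=\grad\E\), \(g_0=g(U_0)\) and \(H=D^2\E(U_0)\). Along a smooth branch, the exact solution satisfies
\[
\phi(t_n+\tau)=U_0-\tau g_0+\frac{\tau^2}{2}Hg_0-\frac{\tau^3}{6}\bigl(H^2g_0-D^3\E(U_0)[g_0,g_0]\bigr)+O(\tau^4).
\]
Here I use \(\partial_{tt}\phi=Hg\) and \(\partial_t^3\phi=-H^2g-D^3\E[g,g]\), with signs to be checked.

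Each stage \(U_m\) minimizes a smooth coercive objective, since \(S_m>0\). On the assumed locally unique smooth branch it therefore satisfies
\[
S_mU_m-\sum_{i<m}\gamma_{m,i}U_i+\tau\,g(U_m)=0 .
\]
I posit the ansatz
\[
U_m=U_0-\beta_{1,m}\tau g_0+\beta_{2,m}\tau^2Hg_0-\tau^3\bigl(\beta_{3,m}H^2g_0-\beta_{4,m}D^3\E[g_0,g_0]\bigr)+O(\tau^4).
\]
I insert it into the stage equation and expand \(g(U_m)=g_0+H(U_m-U_0)+\tfrac12D^3\E[U_m-U_0,U_m-U_0]+\dots\). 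Matching the coefficients of the elementary differentials \(g_0\), \(Hg_0\), \(H^2g_0\) and \(D^3\E[g_0,g_0]\) should reproduce exactly the recursions \eqref{eq:order-recursion-a}--\eqref{eq:order-recursion-b}, with \(\beta_{j,0}=0\) coming from \(U_0\). The \(\beta_{1,m}^2/2\) source in \(\beta_{4,m}\) arises from the quadratic term \(\tfrac12D^3\E[\beta_{1,m}\tau g_0,\beta_{1,m}\tau g_0]\). The \(\beta_{1,m}\) and \(\beta_{2,m}\) sources arise from \(H\) acting on the previous-order terms of \(U_m\) itself.

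The implicit function theorem justifies that the expansion is genuine rather than merely formal. At \(\tau=0\) the stage equation has Jacobian \(S_m I\), which is invertible, so the solution \(U_m(\tau)\) near \(U_0\) is smooth in \(\tau\). Induction on \(m\) then gives the expansion with an \(O(\tau^4)\) remainder. I read "sufficiently smooth" as \(\E\in C^4\) near \(U_0\) and \(\phi\in C^4\).

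Comparing the expansion of \(U_k\) with that of the exact solution term by term finishes the argument. The conditions \(\beta_{1,k}=1\) and \(\beta_{2,k}=1/2\) kill the errors through \(\tau^2\), giving \(O(\tau^3)\) with \(k=3\). Adding \(\beta_{3,k}=\beta_{4,k}=1/6\) kills the \(\tau^3\) terms, giving \(O(\tau^4)\) with \(k=6\). The stability hypothesis \eqref{eq:stability-cond} plays no role in the local expansion; it only ensures, via Theorem~\ref{thm:multistage_stability}, that the iterates stay in a bounded energy sublevel set. That boundedness is what makes the standard "local error \(O(\tau^{p+1})\) implies global order \(p\)" statement formal.

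The main obstacle is bookkeeping at third order. I need to confirm that only the two elementary differentials \(H^2g_0\) and \(D^3\E[g_0,g_0]\) appear. Their coefficients in the stage equation must decouple into exactly the \(\beta_3\) and \(\beta_4\) recursions, and the exact flow's coefficients must both be \(1/6\) with matching signs. I will fix the sign conventions for the \(\tau^3\) terms first by checking the single-stage case \(k=1\), \(\gamma_{1,0}=1\) (implicit Euler). That case should give \(\beta_{j,1}=1\) for \(j=1,2,3\) and \(\beta_{4,1}=1/2\), which are not the exact values \(1/2\), \(1/6\), \(1/6\), consistent with first order.
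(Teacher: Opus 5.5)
Your route is the one the paper uses. The paper gives no self-contained proof: it cites Zaitzeff et al.\ and sketches exactly this Taylor expansion of the stage Euler--Lagrange equations, which yields the $\beta$-recursion \eqref{eq:order-recursion}, followed by a term-by-term comparison with the exact flow.

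There is, however, a sign error in both of your third-order displays. You correctly state $\partial_t^3\phi=-H^2g-D^3\E[g,g]$. The exact expansion therefore carries $-\frac{\tau^3}{6}\bigl(H^2g_0+D^3\E(U_0)[g_0,g_0]\bigr)$, not $-\frac{\tau^3}{6}\bigl(H^2g_0-D^3\E[g_0,g_0]\bigr)$. Likewise, the stage ansatz must read $U_m=U_0-\beta_{1,m}\tau g_0+\beta_{2,m}\tau^2Hg_0-\tau^3\bigl(\beta_{3,m}H^2g_0+\beta_{4,m}D^3\E[g_0,g_0]\bigr)+O(\tau^4)$.

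With your sign, matching the $D^3\E[g_0,g_0]$ coefficient in $S_m\delta_m-\sum_{i=1}^{m-1}\gamma_{m,i}\delta_i+\tau g(U_0+\delta_m)=0$, where $\delta_m=U_m-U_0$, gives $S_m\beta_{4,m}=-\beta_{1,m}^2/2+\sum_{i=1}^{m-1}\gamma_{m,i}\beta_{4,i}$. That is not the paper's recursion, and your implicit-Euler check would return $\beta_{4,1}=-1/2$. Expanding $U_1=U_0-\tau g(U_1)$ directly gives $U_1=U_0-\tau g_0+\tau^2Hg_0-\tau^3H^2g_0-\frac{\tau^3}{2}D^3\E[g_0,g_0]+O(\tau^4)$, which confirms the corrected sign and $\beta_{4,1}=1/2$. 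Because the same slip appears in both displays, your final condition $\beta_{4,k}=1/6$ comes out right anyway.

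Your implicit-function argument also needs a caveat. It makes the expansion genuinely rigorous only when $\E$ is $C^4$ with bounded derivatives near $U_0$, for instance for a fixed spatial discretization, and even then the $O(\tau^4)$ constants degrade with the stiffness. For the continuum Ginzburg--Landau energy, $\grad\E$ is unbounded on $L^2$, so the Jacobian argument on $\X$ does not apply. This is why the statement is labelled formal and takes the smooth, locally unique stage branch as a hypothesis rather than proving it.
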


No unconditionally energy-stable two-stage second-order scheme of the form \eqref{eq:k_stage_problem} exists~\cite{Zaitzeff2020}, so $k=3$ is minimal. Theorem~\ref{thm:high_order} is stated in terms of unnormalized one-step defects, obtained by Taylor expansion of the stage Euler--Lagrange equations~\cite{Zaitzeff2020}, so an order-$p$ scheme carries a defect of $O(\tau^{p+1})$ over a single step. The corresponding coefficient matrices are given in Table~\ref{tab:high_order_coefficients}.

\begin{table}[H]
\centering
\caption{Coefficient matrices $\gamma$ for the first-, second-, and third-order multi-stage schemes~\cite{Zaitzeff2020}.}
\label{tab:high_order_coefficients}
\begin{tabular}{ccc}
\toprule
Order & Stages & $\gamma$ matrix \\
\midrule
1 & 1 & $[1]$ \\
2 & 3 & $\begin{bmatrix} 5 & 0 & 0 \\ -2 & 6 & 0 \\ -2 & 3/14 & 44/7 \end{bmatrix}$ \\
3 & 6 & Eq.~\eqref{eq:gamma_decimal} \\
\bottomrule
\end{tabular}
\end{table}

For the third-order six-stage scheme, the update from $\phi^n$ to $\phi^{n+1}$ uses Definition~\ref{def:k_stage} with time step $\tau$ and the coefficient matrix $\gamma$ of~\cite{Zaitzeff2020}. In our single-precision implementation, we evaluate the coefficients in decimal form, accurate to the digits shown,
\begin{equation}
\label{eq:gamma_decimal}
\gamma
\approx
\begin{bmatrix}
11.1666667 & 0 & 0 & 0 & 0 & 0 \\
-7.5 & 19.4285714 & 0 & 0 & 0 & 0 \\
-1.05 & -4.75 & 13.9761905 & 0 & 0 & 0 \\
1.8 & 0.047619 & -7.8333333 & 13.8 & 0 & 0 \\
6.2 & -7.1666667 & -1.3333333 & 1.625 & 11.5238095 & 0 \\
-2.8333333 & 4.6875 & 2.4577341 & -11.551686 & 6.6801207 & 11.9455237
\end{bmatrix}.
\end{equation}
The exact corrected coefficients satisfy the third-order conditions. The displayed decimals satisfy them only up to coefficient-rounding errors. We retain the displayed precision to limit these errors.

\begin{remark}[Energy stability in the network setting]
\label{rem:stability-network}
Theorem~\ref{thm:multistage_stability} controls the energy only at full time steps; intermediate stages need not decrease it monotonically. In the sampled network implementation, define
\[
\mathcal G_h(U):=\E_h(U)+P_{\mathrm{BC},h}(U),\qquad
J_{m,h}(U):=\mathcal G_h(U)+\sum_{i=0}^{m-1}\frac{\gamma_{m,i}}{2\tau}\norm{U-U_i}_{\X,h}^2,
\]
where all states are mean-projected in the Cahn--Hilliard cases. Provided the same admissible trial class, quadrature, metric, and boundary penalty are used at every stage, the preceding stage is an admissible competitor. If the computed stages satisfy
\[
J_{m,h}(U_m)\le J_{m,h}(U_{m-1}),\qquad m=1,\ldots,k,
\]
the algebraic argument of~\cite[Theorem~2.1]{Zaitzeff2020} gives $\mathcal G_h(U_k)\le\mathcal G_h(U_0)$ without requiring global minimization. For the single-stage scheme, the required check is
\[
\mathcal G_h(U_1)+\frac{1}{2\tau}\norm{U_1-U_0}_{\X,h}^2\le\mathcal G_h(U_0).
\]
When the boundary penalty is absent or vanishes identically, $\mathcal G_h = \ E_h$. 
\end{remark}

\subsection{Adaptive time stepping}
\label{subsec:time-adaptive}
We adjust the time step using a heuristic relative-change controller to improve efficiency. The step shrinks during rapidly evolving periods, where accuracy is at risk, and grows during slowly evolving periods, such as coarsening, where computation can be saved.

After the $(n+1)$-th time step is computed, we measure the relative change of the solution by
\begin{equation}
e_{n+1} = \frac{\norm{\phi^{n+1} - \phi^{n}}_{L^2}}{\norm{\phi^{n+1}}_{L^2}}.
\end{equation}
Here both norms are the discrete $L^2$ norms on the quadrature grid, for the $L^2$ and the $H^{-1}$ flows alike. The quantity $e_{n+1}$ is large when the solution changes rapidly and small once the dynamics slow.

The value of $e_{n+1}$ is then compared with a prescribed tolerance $\mathrm{tol}$ to decide whether the step is accepted. If $e_{n+1} \ge \mathrm{tol}$ and the current step size $\tau_n > \tau_{\min}$, the step is rejected, the step size is reduced, and the solution is recomputed at the current time level. If $e_{n+1} < \mathrm{tol}$, the result is accepted and the step size is updated for the next step. If the step size has already reached the prescribed minimum $\tau_{\min}$, no further reduction is made, and the step is accepted.

The updated time step is given by
   \begin{equation}
\tau_{\mathrm{ada}} = \rho \sqrt{\frac{\mathrm{tol}}{\max(e_{n+1},\delta_e)}} \cdot \tau_{n},
\label{eq:tau_ada}
\end{equation}
where $\tau_{n}$ is the current time step, $\rho\in(0,1)$ is a safety factor, and $\mathrm{tol}$ is the reference tolerance. We use the square-root exponent for all adaptive runs. When $e_{n+1}$ is well below the tolerance, the factor $\rho\sqrt{\mathrm{tol}/e_{n+1}}$ exceeds $1$, and the step size increases. As $e_{n+1}$ approaches the tolerance, the factor tends toward $\rho < 1$. The accepted-step update is in equilibrium at $e_{n+1}=\rho^2\,\mathrm{tol}$, where the factor equals one, so the controller regulates the activity indicator around $\rho^2\,\mathrm{tol}$ rather than $\mathrm{tol}$ itself.

\section{Numerical examples}
\label{sec:numerics}

We organize the experiments by question rather than by equation. The questions concern comparisons with the EVNN baseline and a space--time PINN, the accuracy of the multi-stage schemes relative to the non-temporal error floor, and adaptive time stepping.

\subsection{Test problems and implementation setup}

\subsubsection{Test problems}
\label{subsec:test-problems}

\paragraph{One-dimensional Allen--Cahn equation.} We consider the one-dimensional Allen--Cahn equation as the \(L^2\) gradient flow
\begin{equation}
\partial_t \phi = -\frac{\delta \E}{\delta \phi},
\end{equation}
associated with the free energy functional $ \E(\phi) = \int_{-1}^{1} \left( \frac{\varepsilon^2}{2} (\partial_x\phi)^2 + \frac{5}{4}(\phi^2 - 1)^2 \right) \dd x, $ where the interface parameter $\varepsilon$ satisfies $\varepsilon^2 = 0.001$. The double-well prefactor here is $5/4$ rather than the $1/4$ of \eqref{eq:ch-energy}, but the scaling argument of Section~\ref{subsec:variable-scaling} applies to a generic double-well $F$. The corresponding partial differential equation reads
\begin{subequations}\label{eq:ac1d-problem}
\begin{equation}
\partial_t \phi - 0.001 \, \partial_x^2 \phi + 5\phi^3 - 5\phi = 0,
\qquad x \in [-1, 1], \; t \in [0, 1],
\end{equation}
with Dirichlet boundary conditions
\begin{equation}
\phi(-1, t) = \phi(1, t) = -1, \qquad t \in [0, 1],
\end{equation}
and the initial condition
\begin{equation}
\phi(x, 0) = x^2 \cos(\pi x), \qquad x \in [-1, 1].
\end{equation}
\end{subequations}

\paragraph{Two-dimensional Allen--Cahn equation.} We consider the Allen--Cahn equation as the $L^2$ gradient flow of the Ginzburg--Landau free energy:
\begin{equation}
\partial_t \phi
=
\lambda
\left(
\varepsilon^2 \Delta \phi - \phi^3 + \phi
\right),
\qquad
(x,y)\in \Omega=[0,2\pi)^2 .
\end{equation}
We impose periodic boundary conditions. The free energy functional is given by
\begin{equation}
\E(\phi)
=
\int_{\Omega}
\left(
\frac{\varepsilon^2}{2}|\grad \phi|^2
+
\frac{1}{4}(\phi^2-1)^2
\right)
\dd x\,\dd y .
\end{equation}
We take the interface parameter $\varepsilon=0.08$ and the relaxation parameter $\lambda=1$. 

\paragraph{Star-shaped interface.} The initial condition is a star-shaped profile:
\begin{equation}
\phi_0(x,y)
=
\tanh
\left(
\frac{R(\vartheta)-r}{\varepsilon\sqrt{2}}
\right),
\qquad
R(\vartheta)=R_0+A\cos(6\vartheta),
\qquad
r=\sqrt{(x-\pi)^2+(y-\pi)^2},
\end{equation}
where $\vartheta$ is the polar angle about the domain center $(\pi,\pi)$, with $R_0=1.5$ and $A=0.4$, so the interface of $\phi_0$ is a six-pointed star.

\paragraph{Randomly placed circles.} The initial condition is generated by placing $N_c$ circles with random centers and radii within the domain. The phase field associated with the $i$th circle is
\begin{equation}
q_i(x,y) = \tanh\!\left( \frac{R_i - r_i(x,y)}{\varepsilon\sqrt{2}} \right),
\qquad i=1,\ldots,N_c,
\end{equation}
where $R_i$ is its radius and $r_i(x,y)=\sqrt{(x-x_i)^2+(y-y_i)^2}$ is the Euclidean (not toroidal) distance to its center $(x_i,y_i)$. The radii are uniformly sampled from $[0.5, 0.8]$. The centers are uniformly distributed in $[L/4, 3L/4]^2$ with $L=2\pi$, so every circle keeps a margin of at least $\pi/2 - 0.8 \approx 0.77$ from the boundary. This margin is much larger than the interface parameter $\varepsilon=0.08$, and the initial interfaces stay strictly inside the domain. The resulting initial condition is the pointwise maximum $\phi_0(x,y)=\max_{1\le i\le N_c} q_i(x,y)$. Unless stated otherwise, we use $N_c = 5$ circles.

\paragraph{One-dimensional Cahn--Hilliard equation.} We take the one-dimensional Cahn--Hilliard equation on $\Omega=[-1,1]$ with periodic boundary conditions,
\begin{equation}
\partial_t\phi = M\,\partial_{xx}\mu,
\qquad
\mu = -\varepsilon^2\partial_{xx}\phi + \phi^3 - \phi,
\end{equation}
which is the $H^{-1}$ gradient flow of the Ginzburg--Landau energy \eqref{eq:ch-energy}, discretized by the VS-EVNN scheme in the $H^{-1}$ metric of Section~\ref{subsec:ch-metric}. The initial condition is
\begin{equation}
\phi(x,0)=\cos(\pi x), \qquad x\in[-1,1],
\end{equation}
whose mean is zero, so the dynamics evolve on the mass manifold $\mathcal{M}_{m_0}$ with $m_0=0$. Two parameter regimes are tested. The coarsening regime uses mobility $M=2\times10^{-4}$ and interface parameter $\varepsilon=0.01$ and runs to $T_{\text{final}}=1000$. The sharp-interface regime uses $M=1$ and the much smaller interface parameter $\varepsilon=0.002$ and is resolved on a finer grid $N=4096$.

\paragraph{Two-dimensional Cahn--Hilliard equation.} We solve the Cahn--Hilliard equation on the periodic domain $\Omega=[0,2\pi)^2$,
\begin{equation}
\partial_t\phi = M\Delta\mu,
\qquad
\mu = -\varepsilon^2\Delta\phi + \phi^3 - \phi,
\end{equation}
with constant mobility $M=0.01$ and interface parameter $\varepsilon=0.08$. The energy is the Ginzburg--Landau functional \eqref{eq:ch-energy}. The initial condition is the pointwise maximum of $N_c=5$ circular phase-field profiles with randomly generated centers and radii using seed 42, exactly as in the randomly-placed-circles construction above, and the final time is $T_{\text{final}}=100$.

\subsubsection{Implementation details and reproducibility}
\label{subsec:implementation}

All EVNN and VS-EVNN drivers use a width-64 ResNet with three two-layer residual blocks, tanh activation, and Xavier-uniform weight initialization. The input and output dimensions are $1\to1$ in one dimension and $2\to1$ in two dimensions. The VS-EVNN drivers evaluate the coordinates through \eqref{eq:scaled-coordinate-network}, and after the Xavier initialization, the first-layer weights are multiplied by $s$ to compensate for the $1/s$ input magnitude, following Section~\ref{subsec:scaled-network-class}. The architecture is therefore matched between the EVNN and VS-EVNN comparison paths.

\paragraph{Stage representation and inner solvers.} Every stage is an updated-state network warm-started from the preceding stage, with the first stage warm-started from the accepted network of the current time level. Earlier stages are evaluated on the common quadrature grid and detached. The loss is the multi-anchor objective \eqref{eq:network-stage-loss}, so the representation does not grow with the number of time steps. The inner solver is L-BFGS with a strong-Wolfe line search, stopping at the iteration cap or once the gradient or parameter change falls below the configured tolerances. The two-dimensional Allen--Cahn drivers allow $500$ to $1000$ iterations, and the Cahn--Hilliard drivers precede L-BFGS with an Adam warm-up, namely $500$ Adam iterations at learning rate $10^{-4}$ followed by at most $500$ L-BFGS iterations in the fixed-step drivers, and $100$ Adam iterations followed by at most $100$ L-BFGS iterations in the adaptive two-dimensional driver. The mean-value correction of Section~\ref{sec:nn_approximation} enforces the mass constraint in the Cahn--Hilliard drivers.

\paragraph{Quadrature and differentiation.} The one-dimensional Allen--Cahn drivers use an endpoint-inclusive uniform grid on $[-1,1]$ with spacing $\Delta x = 2/(N-1)$, compute spatial derivatives by automatic differentiation through the coordinate network, and evaluate the energy and movement terms by Riemann sums of the form $\sum(\cdot)\,\Delta x$. The two-dimensional Allen--Cahn drivers use automatic differentiation on endpoint-excluded $128\times128$ grids on $[0,2\pi)^2$. Both Cahn--Hilliard families use endpoint-excluded periodic grids ($N=1024$ for $\varepsilon=0.01$ and $N=4096$ for $\varepsilon=0.002$ in one dimension, $128\times128$ in two dimensions) and evaluate the energy and the $H^{-1}$ movement terms with second-order finite-difference Laplacian symbols and FFTs rather than network derivatives. In every case, the grid specifies the resolution of the energy quadrature and collocation, not a count of mesh degrees of freedom.

All four reference solutions are computed by semi-implicit (IMEX) schemes of first order in time, treating the stiff linear operator implicitly and the cubic nonlinearity explicitly. Their accuracy comes from over-resolving the time step rather than from raising the order. The one-dimensional Allen--Cahn reference discretizes the diffusion term using the three-point Laplacian, solves the resulting tridiagonal system for the interior unknowns, treats the term $-5(u^3-u)$ explicitly, and holds the Dirichlet data at $ x=-1$. It runs on $2049$ endpoint-inclusive points with time step $10^{-4}$. The one-dimensional Cahn--Hilliard reference advances $(I+\tau\,M\varepsilon^2 D_4)\phi^{n+1}=\phi^n+\tau\,M D_2(\phi^3-\phi)$ on $1024$ endpoint-excluded points with default time step $10^{-5}$, and restores the mean after every step. Here, $D_2$ is the periodic three-point Laplacian, and $ D_4 = D_2^2$. The two-dimensional references are Fourier IMEX kernels on the $128\times128$ grid. The Allen--Cahn kernel solves $(1-\tau\widehat{L})\widehat\phi^{\,n+1}=\widehat{\bigl(\phi^n-\tau\lambda(\phi^n)^3\bigr)}$ with $\widehat{L}=\lambda(1-\varepsilon^2|k|^2)$, and the Cahn--Hilliard kernel solves $(1+\tau M\varepsilon^2|k|^4)\widehat\phi^{\,n+1}=\widehat\phi^{\,n}-\tau M|k|^2\,\widehat{\bigl((\phi^n)^3-\phi^n\bigr)}$ with the mean likewise restored after every step. Here $\widehat{\,\cdot\,}$ is the discrete Fourier transform. The two-dimensional Cahn--Hilliard reference uses the default time step $10^{-2}$.

\paragraph{Precision, hardware, seeds, and initial-condition fitting.} The neural parameters and sampled tensors are single-precision (float32), while the NumPy and SciPy reference and FFT paths use double precision. All experiments are implemented in PyTorch, and the neural runs are reported on an NVIDIA RTX 3090 GPU. Unless stated otherwise, the reported results use seed 3407 for the one-dimensional Allen--Cahn experiments and seed 42 for all other experiments. Table~\ref{tab:2d-seeds} reports the additional seeds 1--5. For the initial-condition fitting, the network is pretrained with Adam for at most 5000 epochs per round, with the learning rate decaying by a factor of 0.7 every 1000 steps, and then refined with L-BFGS. If the fitting target is missed, up to three additional rounds (two in the two-dimensional Cahn--Hilliard drivers) are run at a tenfold-reduced rate. The one-dimensional Allen--Cahn drivers start from learning rate $10^{-2}$ and the other drivers from $10^{-3}$. The $L_\infty$ fitting targets range from $10^{-6}$ for the one-dimensional VS-EVNN drivers to $10^{-2}$ for the two-dimensional Cahn--Hilliard drivers, and each driver issues a warning if its target is not met.

\paragraph{PINN baseline.} The PINN baseline is a monolithic space--time network taking $(x,t)$ as input. The one-dimensional PINN drivers use a width-128 network with five residual blocks and tanh activation, wider and deeper than the VS-EVNN stage networks, and an unweighted sum of the PDE, initial-condition, and boundary losses. The drivers use $10\,000$ interior, $512$ initial, and $256$ boundary points, drawn once rather than resampled. The interior points are Latin-hypercube samples for the Allen--Cahn driver and uniform random samples for the Cahn--Hilliard driver, and the initial and boundary points are linearly spaced. The Allen--Cahn path trains with Adam for $50\,000$ iterations and the Cahn--Hilliard path for $20\,000$, with no causal weighting, adaptive sampling, time marching, or L-BFGS refinement, and the Cahn--Hilliard boundary loss matches only part of the periodic closure of the fourth-order strong form.

\subsection{Spatial coordinate scaling versus the EVNN baseline} We first test whether the scaled-coordinate representation (VS-EVNN) improves accuracy over the EVNN baseline at a fixed time step and spatial resolution. We compare the two methods on the one-dimensional Allen--Cahn and Cahn--Hilliard problems, and then show a case where the unscaled-coordinate network (EVNN) fails outright.

\subsubsection{One-dimensional Allen--Cahn equation} We compare the first-order VS-EVNN and EVNN schemes on the one-dimensional Allen--Cahn problem of Section~\ref{subsec:test-problems}. Figure~\ref{fig:1d-spatial-compare} shows the computed solutions of the two methods on a 1024-point grid at the common time step $\tau =6.25\times 10^{-3}$, and Table~\ref{tab:1d-evnn-vs} reports their relative $L^2$ errors at $t=1$ for several spatial grids and time steps. For a fixed time step and spatial grid, VS-EVNN is far more accurate than the baseline EVNN under the reported optimizer settings.

\begin{figure}[H]
\centering
\begin{subfigure}[t]{0.8\textwidth}
\centering
\includegraphics[width=\linewidth]{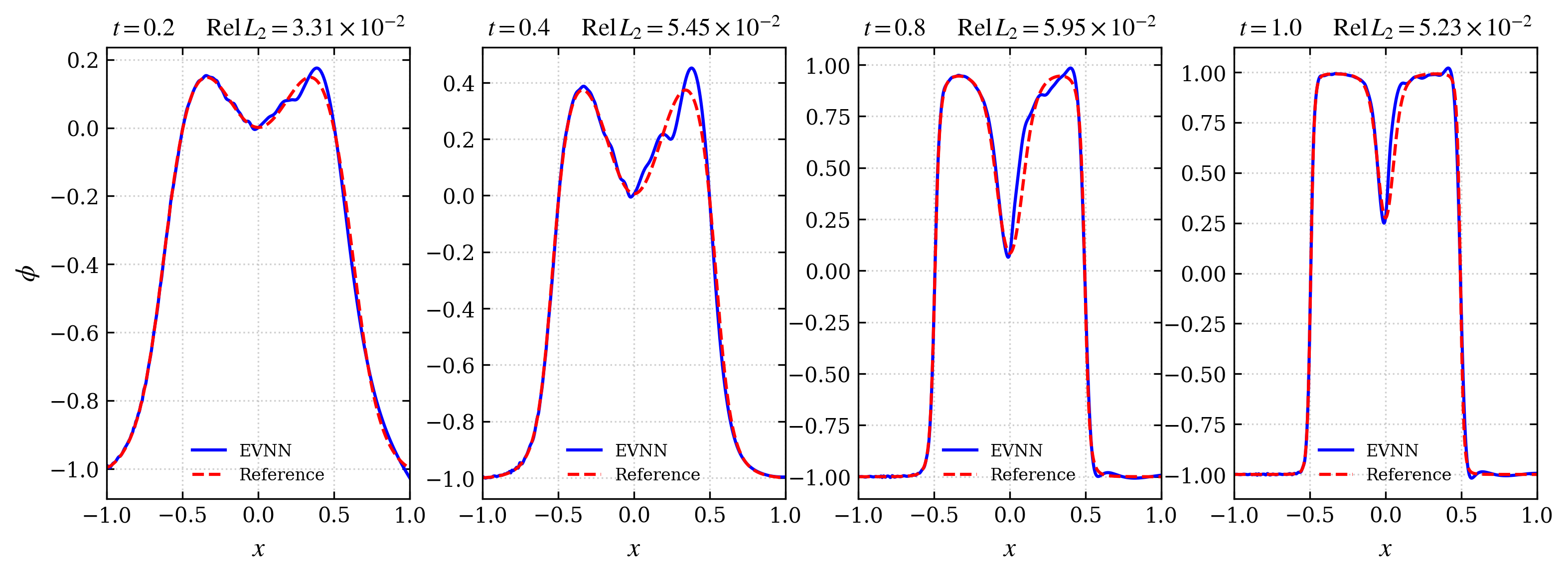}
\caption{EVNN, grid 1024, $\tau=6.25 \times 10^{-3}$}
\end{subfigure}

\begin{subfigure}[t]{0.8\textwidth}
\centering
\includegraphics[width=\linewidth]{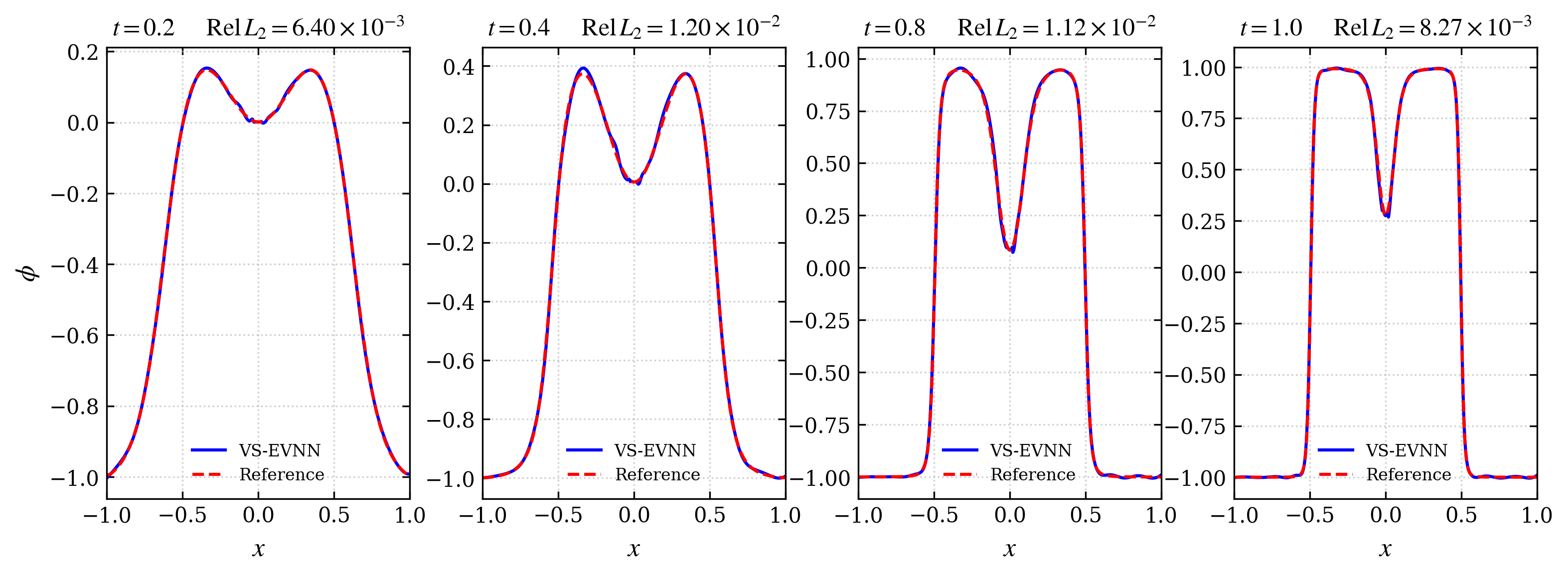}
\caption{VS-EVNN, grid 1024, $\tau=6.25 \times 10^{-3}$}
\end{subfigure}
\caption{One-dimensional Allen--Cahn equation. First-order EVNN versus VS-EVNN on a 1024-point grid at the same time step $\tau=6.25 \times 10^{-3}$. Each panel shows the computed solution (solid) and the reference (dashed) at $t=0.2, 0.4, 0.8, 1.0$.}
\label{fig:1d-spatial-compare}
\end{figure}
\begin{table}[H]
\centering
\caption{Relative $L^2$ errors of the first-order EVNN and VS-EVNN schemes at $t=1$ for the one-dimensional Allen--Cahn problem, under different spatial grids and time steps. The last column is the ratio of the EVNN error to the VS-EVNN error.}
\label{tab:1d-evnn-vs}
\begin{tabular}{c c c c c}
\toprule
 & & \multicolumn{2}{c}{Relative $L^2$ error} & \\
\cmidrule(lr){3-4}
Grid $N$ & $\tau$ & EVNN & VS-EVNN & EVNN/VS-EVNN \\
\midrule
512  & $6.25\times 10^{-3}$ & $7.14\times10^{-2}$ & $1.57\times10^{-2}$ & $4.55$ \\
1024 & $6.25\times 10^{-3}$ & $5.23\times10^{-2}$ & $8.27\times10^{-3}$ & $6.32$ \\
2048 & $6.25\times 10^{-3}$ & $3.44\times10^{-2}$ & $9.03\times10^{-3}$ & $3.81$ \\
1024 & $1.25\times 10^{-2}$ & $6.49\times10^{-2}$ & $1.71\times10^{-2}$ & $3.80$ \\
\bottomrule
\end{tabular}
\end{table}

\subsubsection{One-dimensional Cahn--Hilliard equation near the steady state} The conserved setting repeats the comparison in the $H^{-1}$ metric, using the coarsening regime of Section~\ref{subsec:test-problems}. Table~\ref{tab:1d-ch-evnn-vs} reports the relative $L^2$ errors of the two methods near the steady state at $T=1000$ for the steps $\tau=50$ and $\tau=25$. For the second- and third-order schemes, VS-EVNN is substantially more accurate at the same step. At $\tau=25$, the third-order EVNN error ($3.37\times10^{-3}$) is more than four times the VS-EVNN error ($7.31\times10^{-4}$). The advantage of coordinate scaling is evident in both metrics tested under the reported optimizer settings. It is most visible near the steady state, where the solution consists of well-formed interface profiles at the scale the scaled network represents efficiently.

\begin{table}[H]
\centering
\caption{Relative $L^2$ errors of the first-, second-, and third-order EVNN and VS-EVNN schemes at steps $\tau=50$ and $\tau=25$ for the one-dimensional Cahn--Hilliard problem ($M=2\times10^{-4}$, $\varepsilon=0.01$), measured at $ T=1000$.}
\label{tab:1d-ch-evnn-vs}
\begin{tabular}{c c c c c}
\toprule
 & \multicolumn{2}{c}{$\tau=50$} & \multicolumn{2}{c}{$\tau=25$} \\
\cmidrule(lr){2-3}\cmidrule(lr){4-5}
Order & EVNN & VS-EVNN & EVNN & VS-EVNN \\
\midrule
1 & $1.71\times10^{-3}$ & $1.69\times10^{-3}$ & $1.13\times10^{-3}$ & $9.29\times10^{-4}$ \\
2 & $9.37\times10^{-4}$ & $6.25\times10^{-4}$ & $2.65\times10^{-3}$ & $6.81\times10^{-4}$ \\
3 & $1.39\times10^{-3}$ & $5.00\times10^{-4}$ & $3.37\times10^{-3}$ & $7.31\times10^{-4}$ \\
\bottomrule
\end{tabular}
\end{table}

\subsubsection{An EVNN optimization failure mode} Table~\ref{tab:1d-ch-evnn-vs} reports accuracy at steps where both optimizations succeed. The unscaled optimization can also fail outright. At $\tau=12.5$, the smallest step tested for this problem, the first-order EVNN reaches an unphysical steady state, with order-one solution errors and a spurious extra interface near $x=0.5$ that persists to the final time, while the first-order VS-EVNN at the same step tracks the reference correctly (Figure~\ref{fig:1d-ch-tau125}). The EVNN energy correspondingly stalls above the reference decay.

\begin{figure}[H]
\centering
\begin{subfigure}[t]{0.85\textwidth}
\centering
\includegraphics[width=\linewidth]{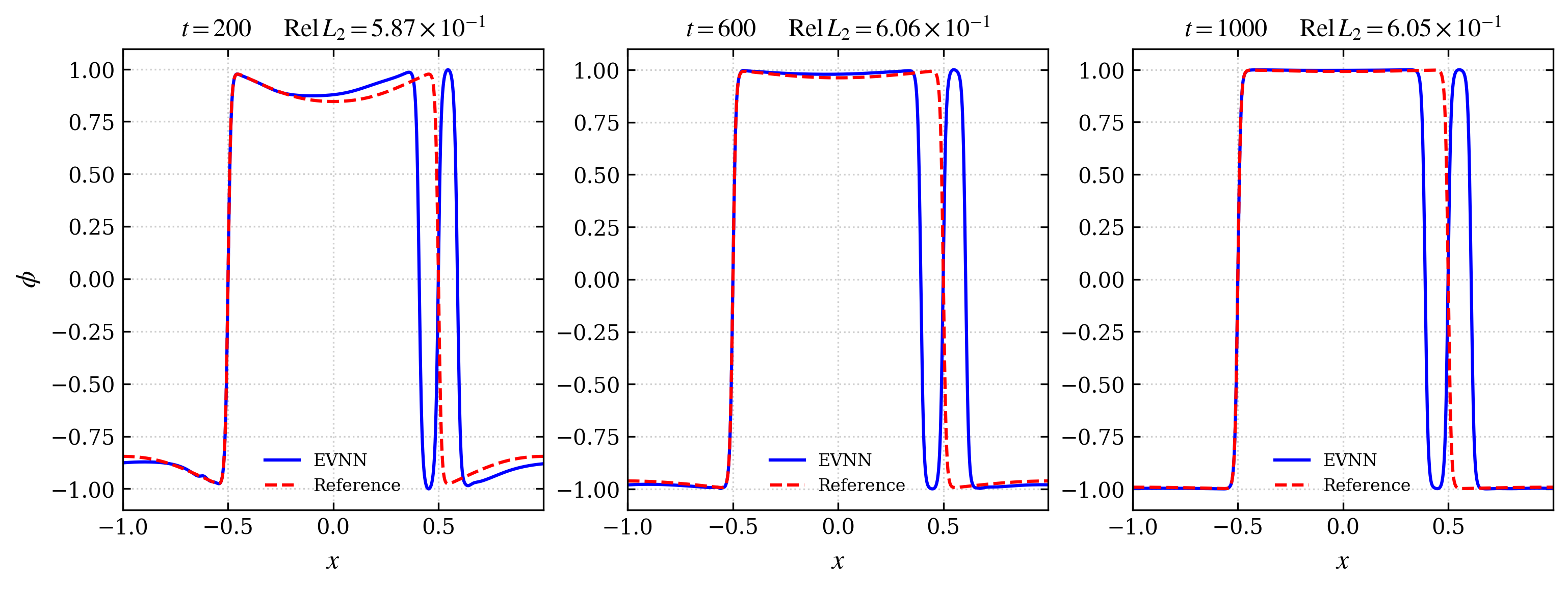}
\caption{First-order EVNN}
\end{subfigure}

\begin{subfigure}[t]{0.85\textwidth}
\centering
\includegraphics[width=\linewidth]{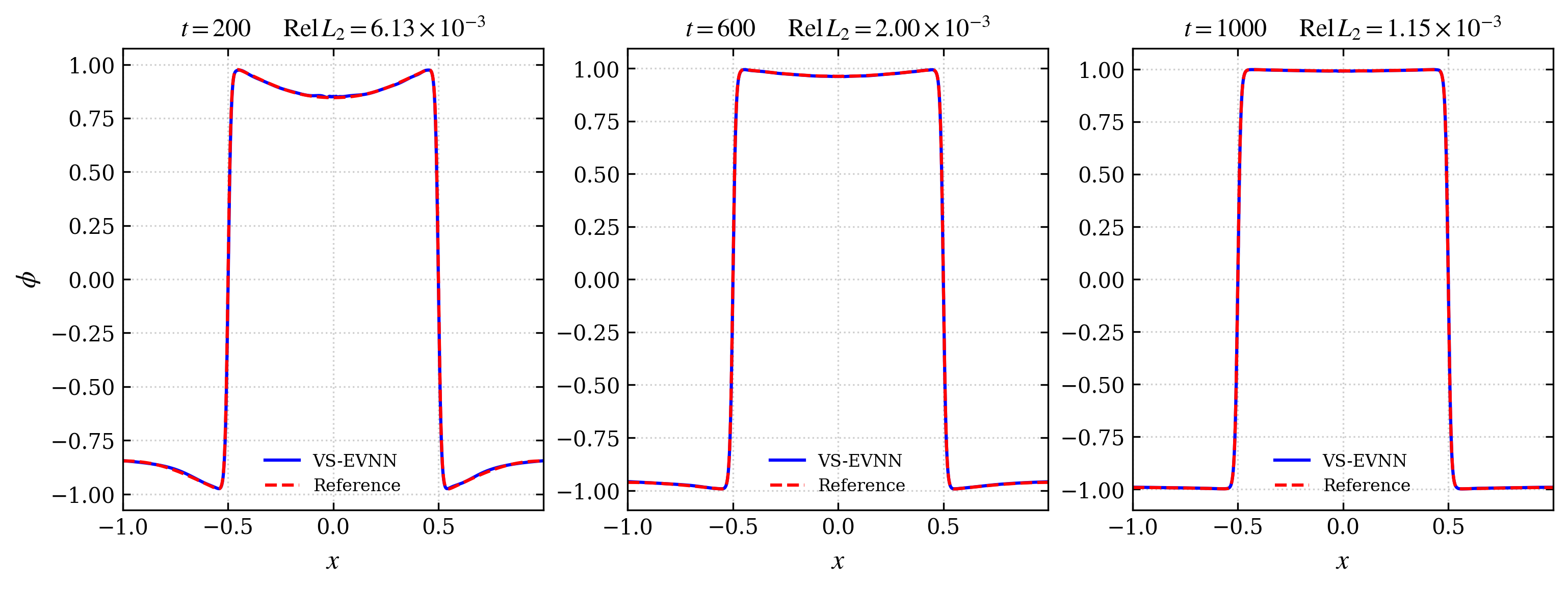}
\caption{First-order VS-EVNN}
\end{subfigure}
\caption{One-dimensional Cahn--Hilliard equation ($\varepsilon=0.01$, $N=1024$, $\tau=12.5$). Computed solutions (solid) and reference solutions (dashed) for the first-order EVNN and VS-EVNN. The EVNN optimization converges to an unphysical steady state, while VS-EVNN tracks the reference.}
\label{fig:1d-ch-tau125}
\end{figure}

\subsection{Comparison with a space--time PINN baseline}

For the one-dimensional Allen--Cahn problem, Figures~\ref{fig:1d-l2} and~\ref{fig:1d-energy-l2} track the relative $L^2$ error of the solution and the relative energy error over time (spatial grid 1024). The VS-EVNN schemes are one to three orders of magnitude more accurate than this basic PINN baseline. The PINN error grows steadily and reaches $O(1)$ by $t=1$, while the sequential variational time stepping keeps the error essentially flat (Figure~\ref{fig:1d-l2}). The benefit of higher order is mainly evident in the energy (Figure~\ref{fig:1d-energy-l2}). The third-order scheme keeps the relative energy error near $10^{-3}$ throughout. The second-order scheme drifts slightly upward, and the first-order scheme fluctuates around $10^{-2}$.

\begin{figure}[H]
\centering
\begin{subfigure}[t]{0.48\textwidth}
\centering
\includegraphics[width=\linewidth]{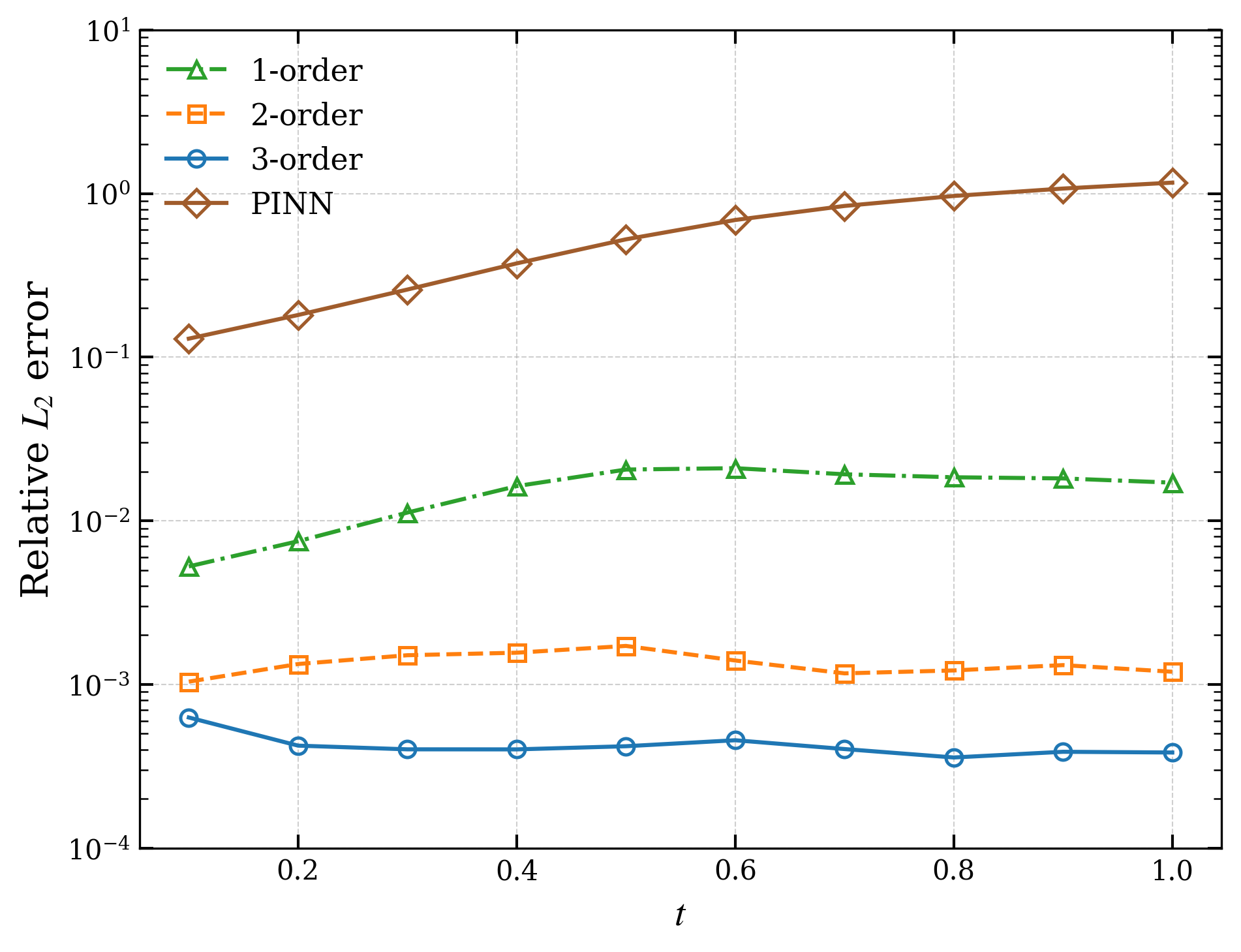}
\caption{Relative $L^2$ error}
\label{fig:1d-l2}
\end{subfigure}\hfill
\begin{subfigure}[t]{0.48\textwidth}
\centering
\includegraphics[width=\linewidth]{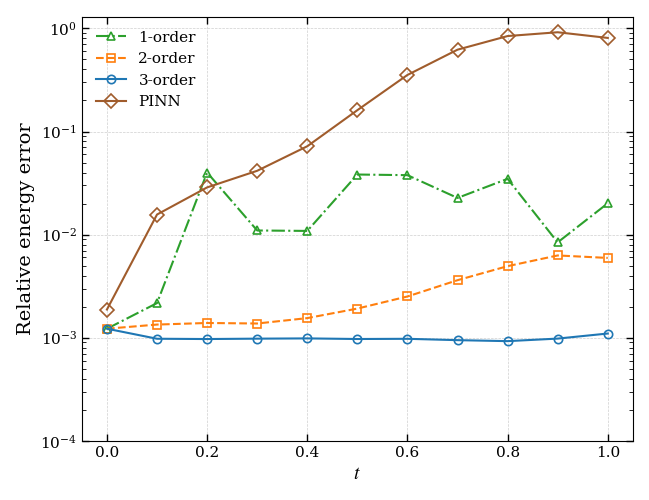}
\caption{Relative energy error}
\label{fig:1d-energy-l2}
\end{subfigure}
\caption{One-dimensional Allen--Cahn equation (spatial grid 1024). (a) Relative $L^2$ error of the solution and (b) relative energy error over time for the first-, second-, and third-order VS-EVNN schemes and the PINN baseline.}
\end{figure}

The Cahn--Hilliard test shows the same ranking of the methods. Figure~\ref{fig:1d-ch-pinn} compares the third-order EVNN and VS-EVNN with the PINN baseline at $\tau=25$ for the one-dimensional Cahn--Hilliard problem in the coarsening regime. Both variational schemes are two to three orders of magnitude more accurate than this basic PINN baseline, whose error saturates near order one almost immediately. The EVNN error levels off at a few times $10^{-3}$, while the VS-EVNN error continues to decrease through the final time, and from $t\approx200$ onward VS-EVNN is consistently the more accurate of the two.

\begin{figure}[H]
\centering
\includegraphics[width=0.48\textwidth]{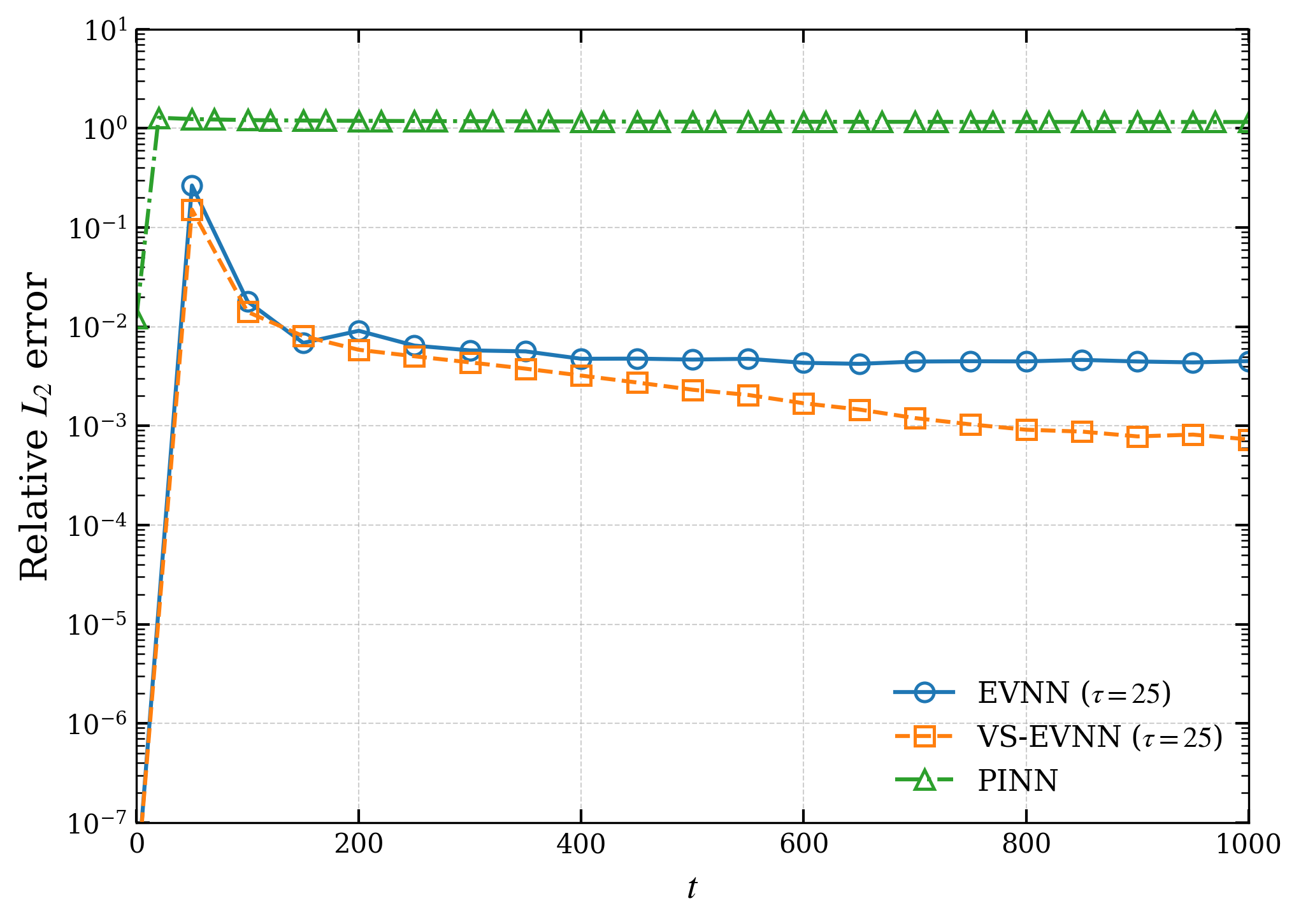}
\caption{One-dimensional Cahn--Hilliard equation ($\varepsilon=0.01$). Relative $L^2$ errors of the third-order EVNN, the third-order VS-EVNN, and the PINN baseline. The variational schemes use $\tau=25$.}
\label{fig:1d-ch-pinn}
\end{figure}

\subsection{Higher-order accuracy and the non-temporal error floor} The experiments in this subsection assess the accuracy gains of the second- and third-order multi-stage schemes at a fixed step. Before the floor is reached, these gains are substantial. For the one-dimensional Allen--Cahn test, the third-order scheme reduces the fixed-step relative $L^2$ error of the first-order scheme by more than one order of magnitude. In the long-horizon runs of Section~\ref{subsec:adaptive-results}, the third-order scheme also maintains the best agreement with the reference energy.

\subsubsection{Temporal convergence for the one-dimensional Allen--Cahn equation} The second- and third-order VS-EVNN solutions on a 1024-point grid at $\tau =1.25\times 10^{-2}$ are visually indistinguishable from the reference at all reported times. Figure~\ref{fig:vsevnn-all-1024-tau00125} summarizes the computed solution and the absolute error of $\phi$ for the first-, second-, and third-order VS-EVNN schemes. In this run, the absolute error decreases as the temporal order increases.

\begin{figure}[H]
\centering
\begin{subfigure}[t]{0.48\textwidth}
\centering
\includegraphics[width=\linewidth]{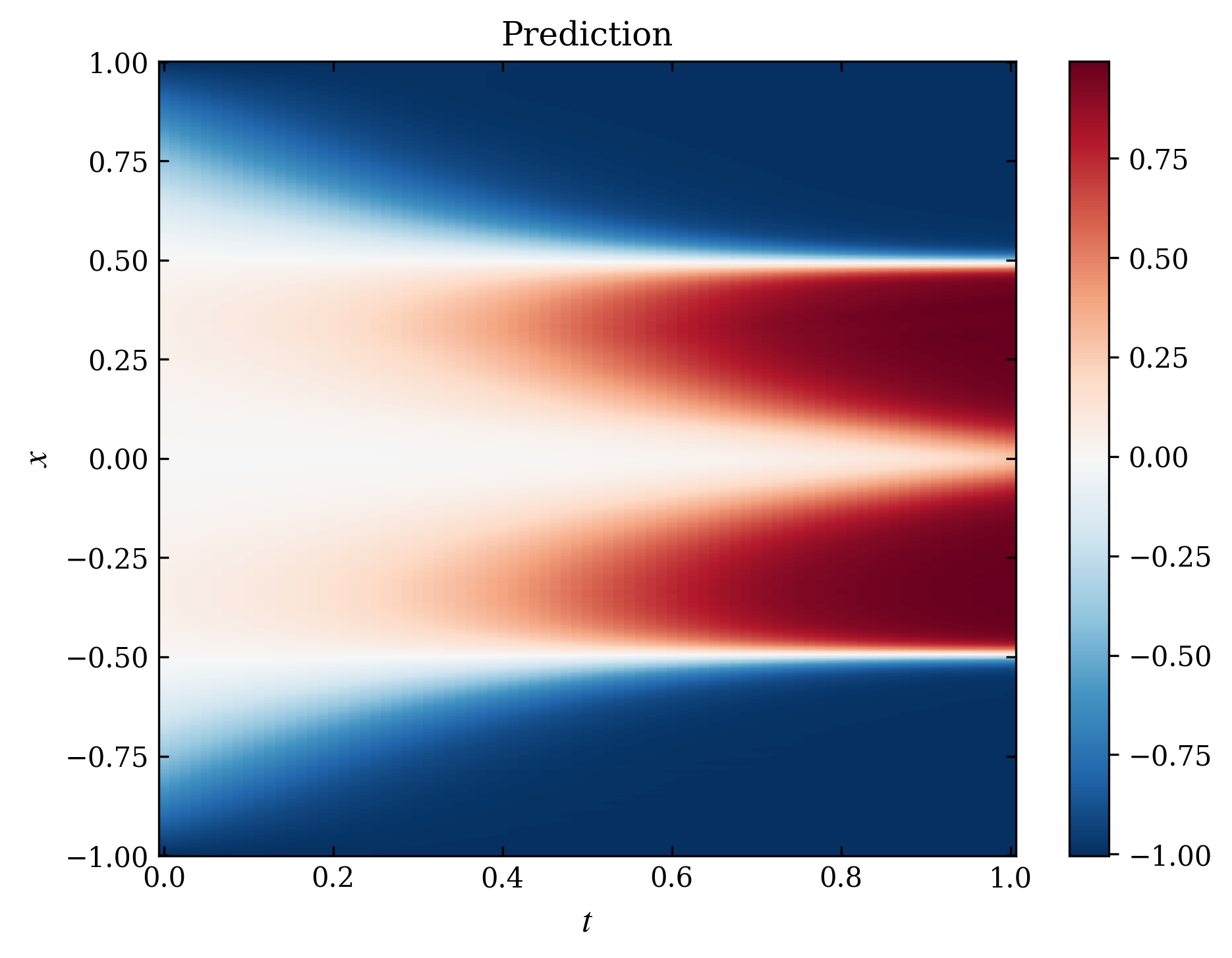}
\caption{Computed solution}
\end{subfigure}
\hfill
\begin{subfigure}[t]{0.48\textwidth}
\centering
\includegraphics[width=\linewidth]{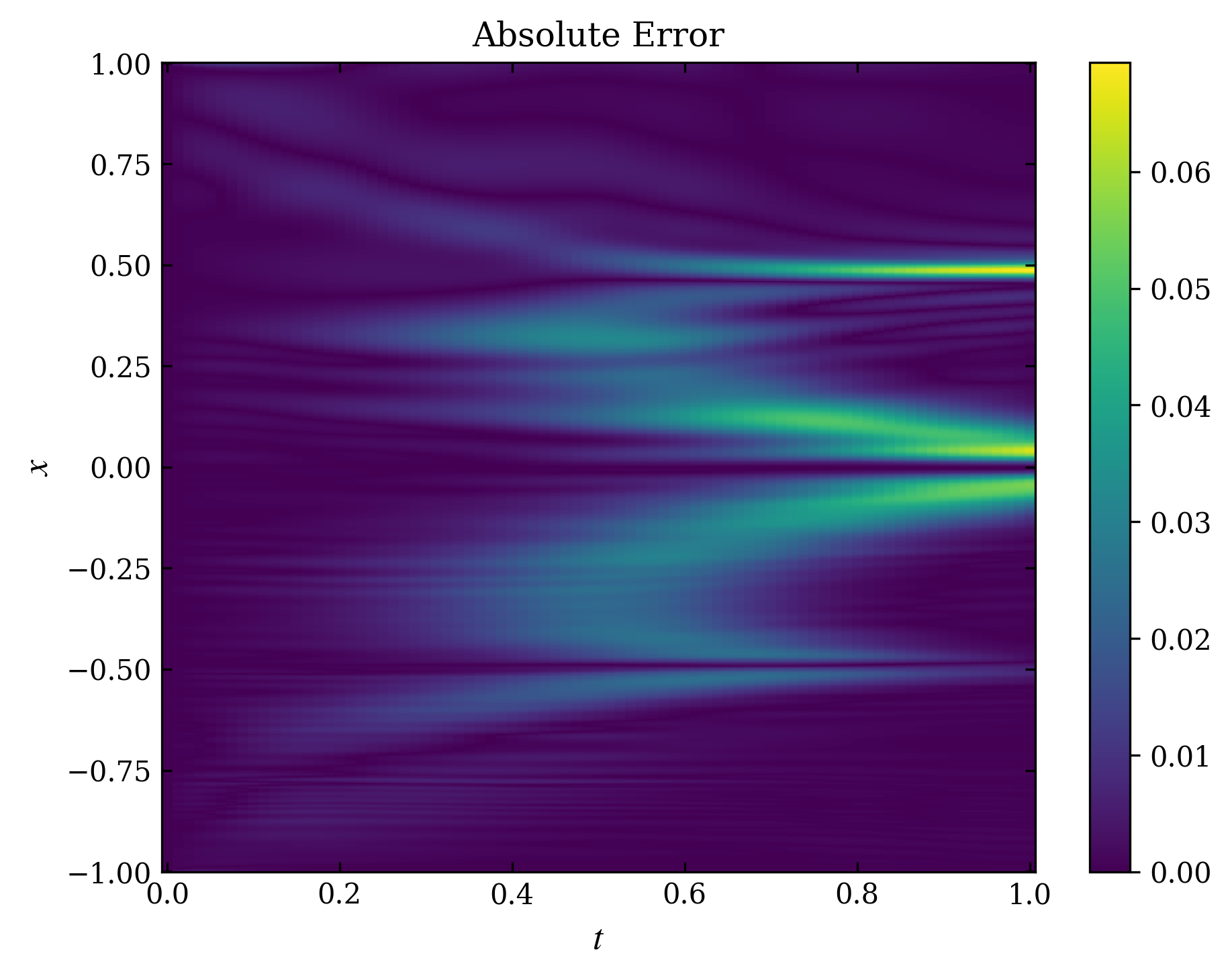}
\caption{First-order error}
\end{subfigure}
\par\bigskip
\begin{subfigure}[t]{0.48\textwidth}
\centering
\includegraphics[width=\linewidth]{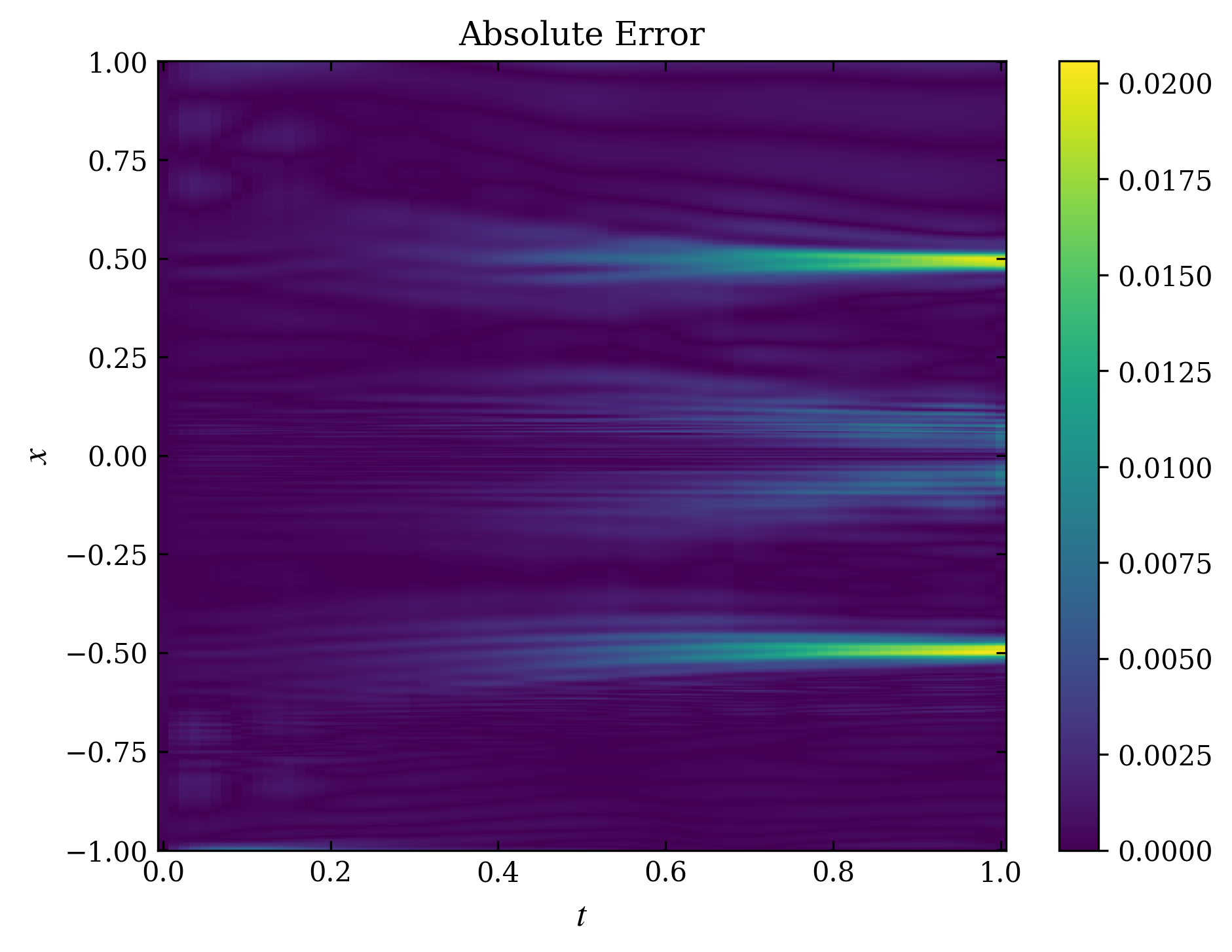}
\caption{Second-order error}
\end{subfigure}
\hfill
\begin{subfigure}[t]{0.48\textwidth}
\centering
\includegraphics[width=\linewidth]{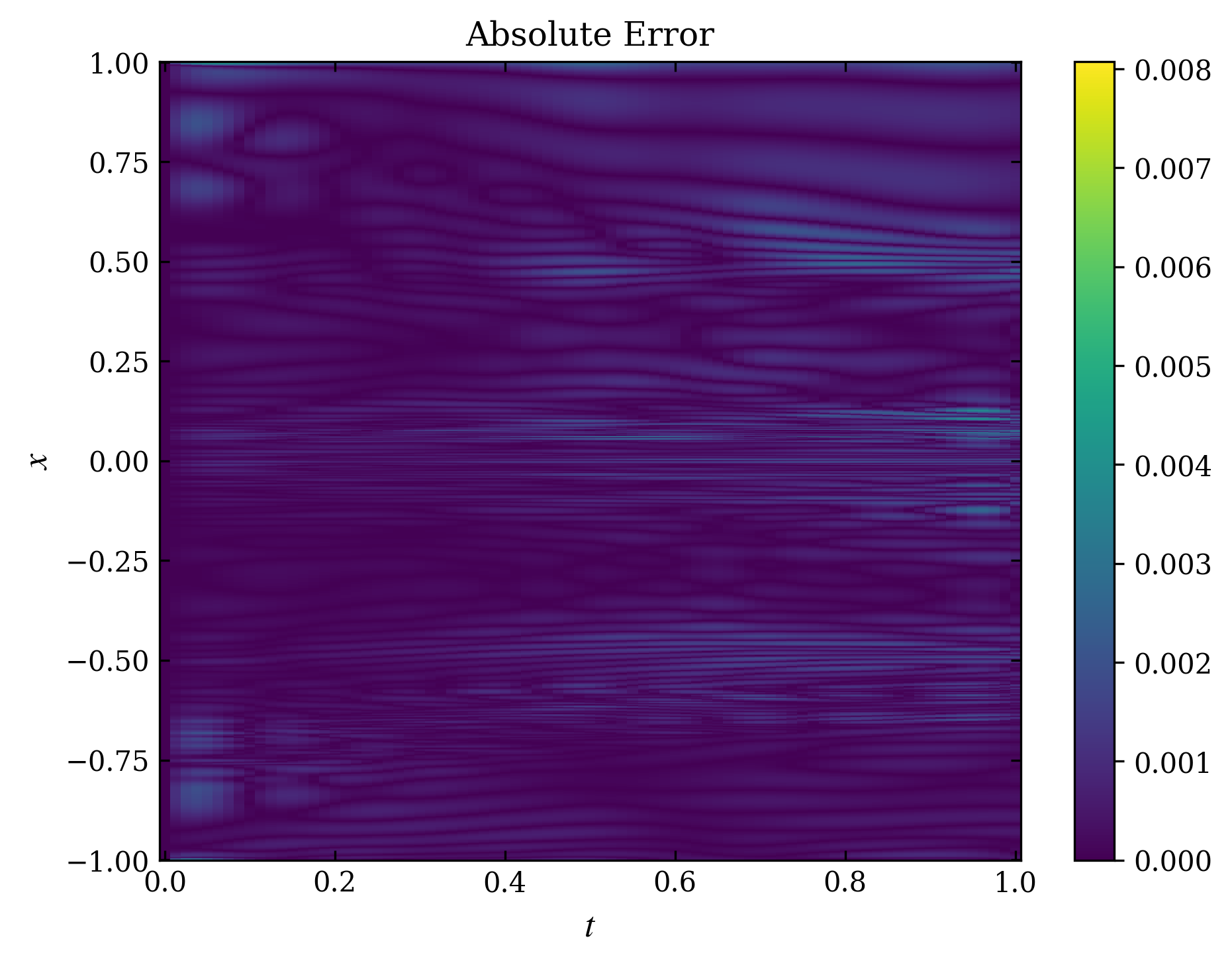}
\caption{Third-order error}
\end{subfigure}
\caption{One-dimensional Allen--Cahn equation (grid 1024, $\tau=1.25\times10^{-2}$). Computed solution and absolute errors for the first-, second-, and third-order VS-EVNN schemes. The top-left panel shows the first-order computed solution. The remaining panels show the absolute error for each scheme, each with its own color scale.}
\label{fig:vsevnn-all-1024-tau00125}
\end{figure}

Figure~\ref{fig:convergence} reports the temporal convergence of the relative $L^2$ error at $t=1$. The first-order scheme exhibits approximately first-order behavior over the tested range of time steps. At any fixed $\tau$, the higher-order schemes are much more accurate. For example, at $\tau=1.25 \times 10^{-2}$, the error drops from $1.71\times10^{-2}$ (first order) to $1.20\times10^{-3}$ (second order) and $7.10\times10^{-4}$ (third order). The third-order curve flattens at smaller time steps, consistent with non-temporal contributions becoming significant. The second-order curve continues to decrease over much of the displayed range.

\begin{figure}[H]
\centering
\includegraphics[width=0.5\textwidth]{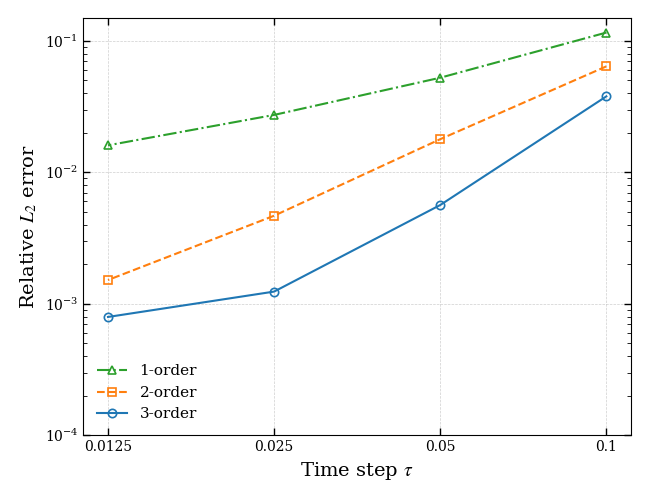}
\caption{One-dimensional Allen--Cahn equation (spatial grid 1024). Temporal convergence of the relative $L^2$ error at $t=1$ for the first-, second-, and third-order VS-EVNN schemes at various time step sizes $\tau$.}
\label{fig:convergence}
\end{figure}

\subsubsection{One-dimensional Cahn--Hilliard equation} In the coarsening regime, we simulate to $T_{\text{final}}=1000$. The dynamics separate the smooth initial profile into a two-phase square-wave pattern with plateaus at $\phi=\pm1$ and interfaces near $x=\pm0.5$, which then relaxes slowly toward the steady state. The small mobility slows the evolution in the chosen time units, and we test large time steps over this long physical-time interval. Figure~\ref{fig:1d-ch-solution} shows the computed solutions of the first-, second-, and third-order VS-EVNN schemes at the step $\tau=100$, which corresponds to only 10 time steps over the entire simulation. It also reports the annotated relative $L^2$ errors at $t=200$, $600$, and $1000$. All three orders capture the profile. The errors decrease in time as the dynamics slow down. 

\begin{figure}[H]
\centering
\begin{subfigure}[t]{0.85\textwidth}
\centering
\includegraphics[width=\linewidth]{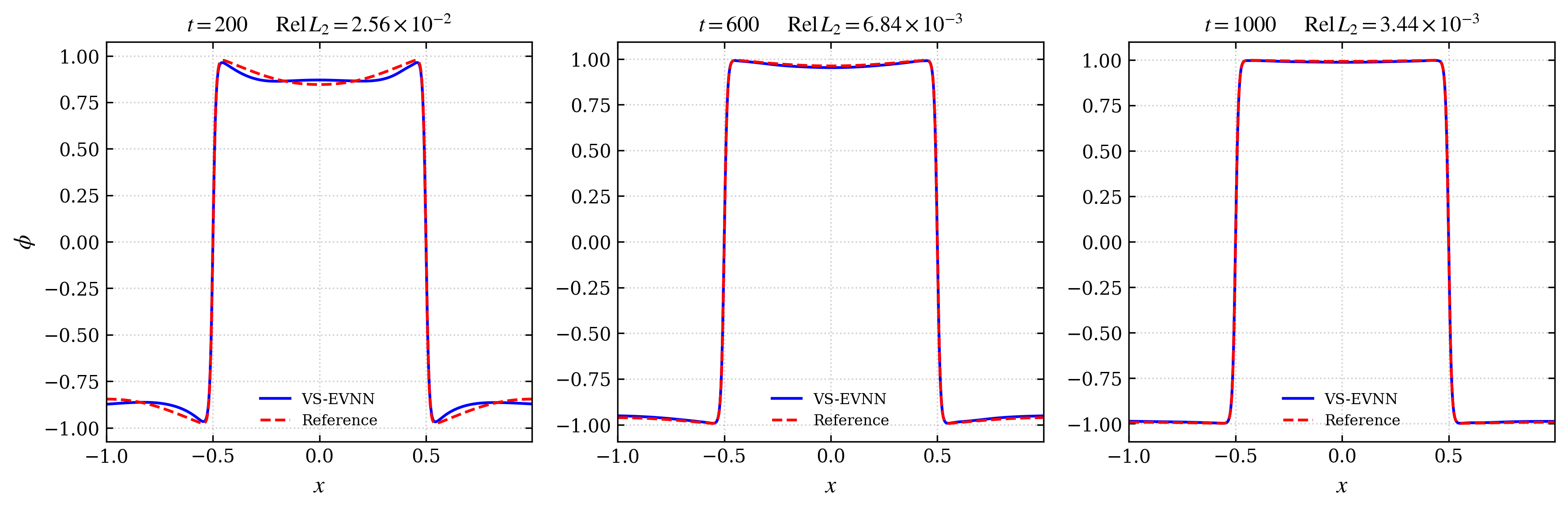}
\caption{First-order VS-EVNN, $\tau=100$}
\end{subfigure}

\begin{subfigure}[t]{0.85\textwidth}
\centering
\includegraphics[width=\linewidth]{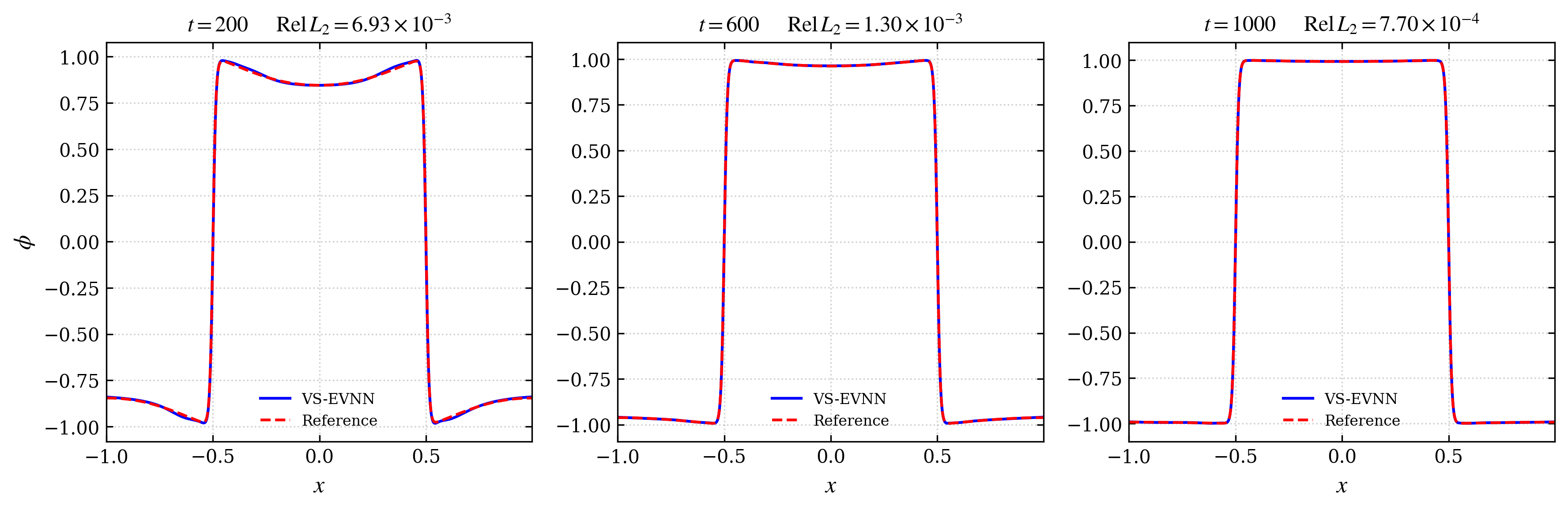}
\caption{Second-order VS-EVNN, $\tau=100$}
\end{subfigure}

\begin{subfigure}[t]{0.85\textwidth}
\centering
\includegraphics[width=\linewidth]{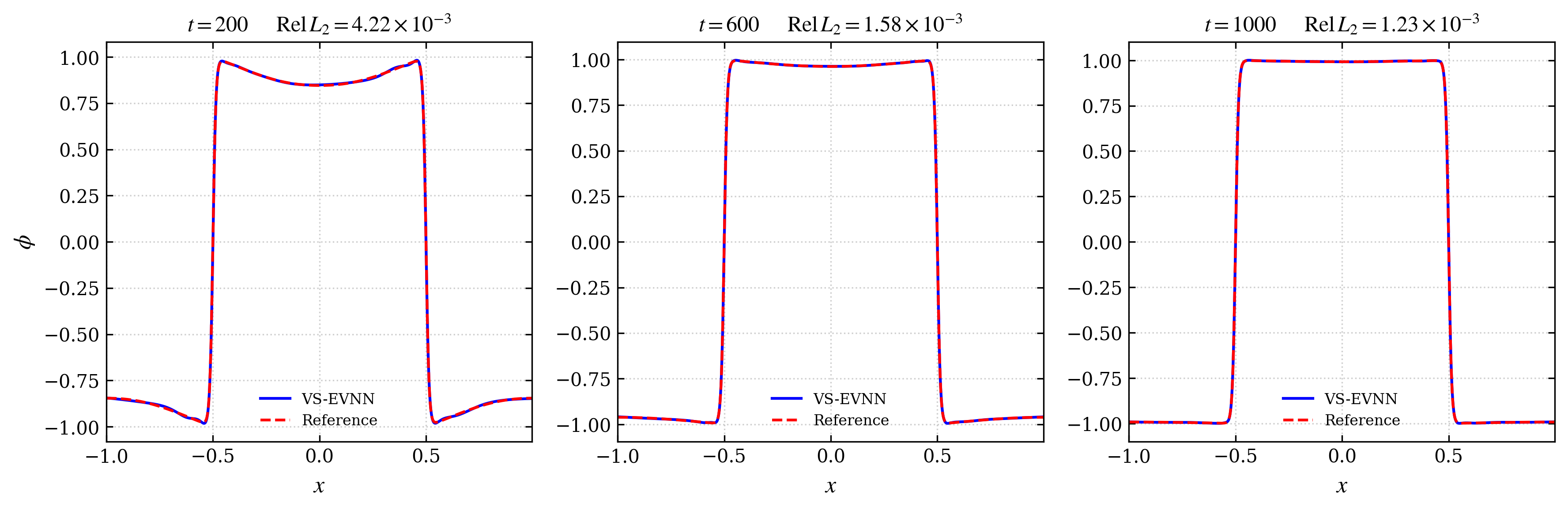}
\caption{Third-order VS-EVNN, $\tau=100$}
\end{subfigure}
\caption{One-dimensional Cahn--Hilliard equation ($M=2\times10^{-4}$, $\varepsilon=0.01$). Computed solutions (solid) and reference (dashed) at $t=200$, $600$, and $1000$ for the first-, second-, and third-order VS-EVNN schemes at $\tau=100$, with the relative $L^2$ errors annotated above each panel.}
\label{fig:1d-ch-solution}
\end{figure}

Figure~\ref{fig:1d-ch-energy} reports the structural diagnostics. The computed energy decays monotonically for all three orders and matches the reference decay. The first-order curve visibly lags the reference during the fast transient ($t\lesssim600$), while the second- and third-order curves overlap it, and the relative energy errors of the higher-order schemes remain well below those of the first-order curve. The mass error stays within $\pm10^{-7}$ throughout, which is the single-precision round-off level, consistent with the exact mean-value projection of Section~\ref{sec:nn_approximation}.

\begin{figure}[H]
\centering
\begin{subfigure}[t]{0.32\textwidth}
\centering
\includegraphics[width=\linewidth]{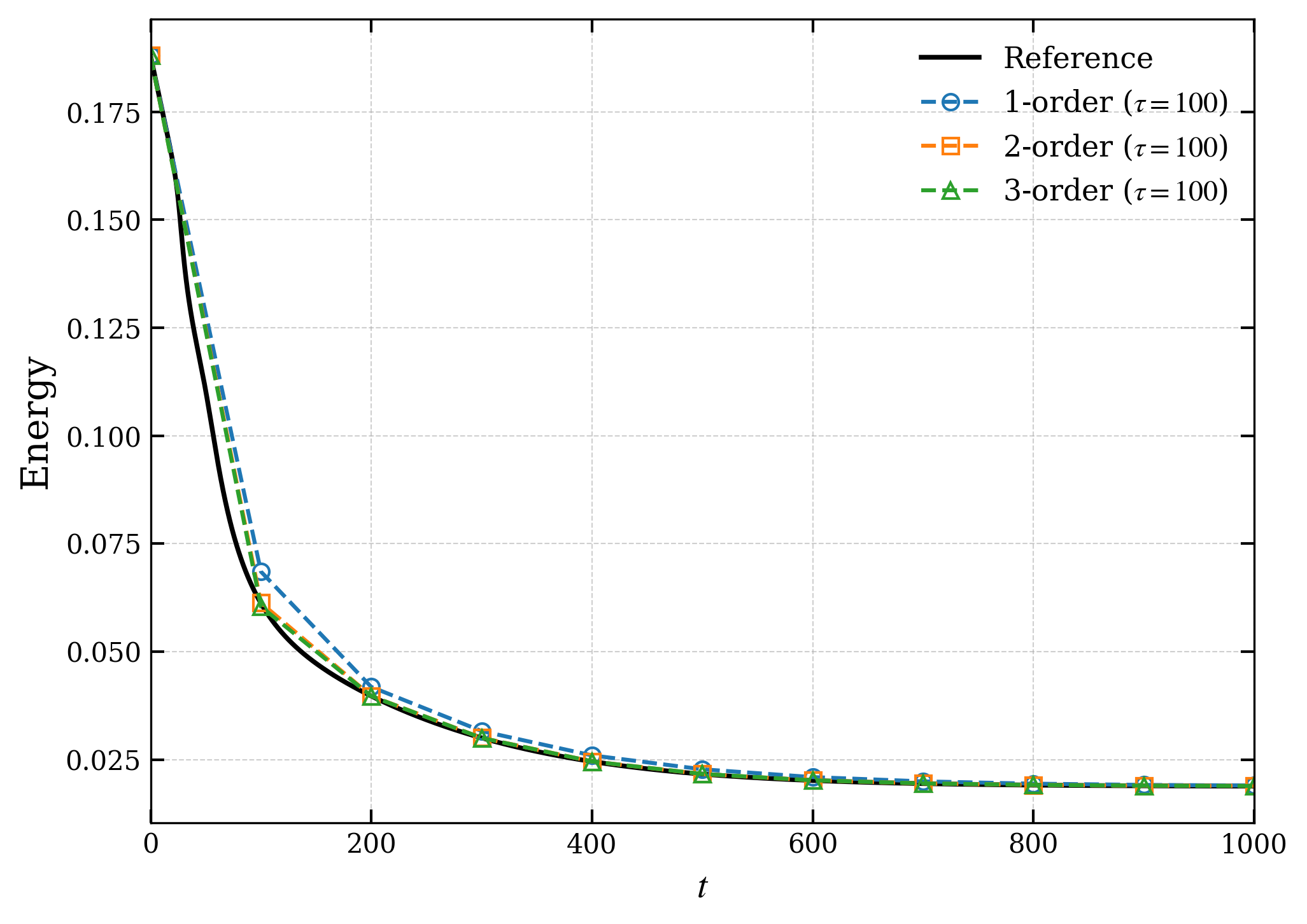}
\caption{Energy evolution}
\end{subfigure}\hfill
\begin{subfigure}[t]{0.32\textwidth}
\centering
\includegraphics[width=\linewidth]{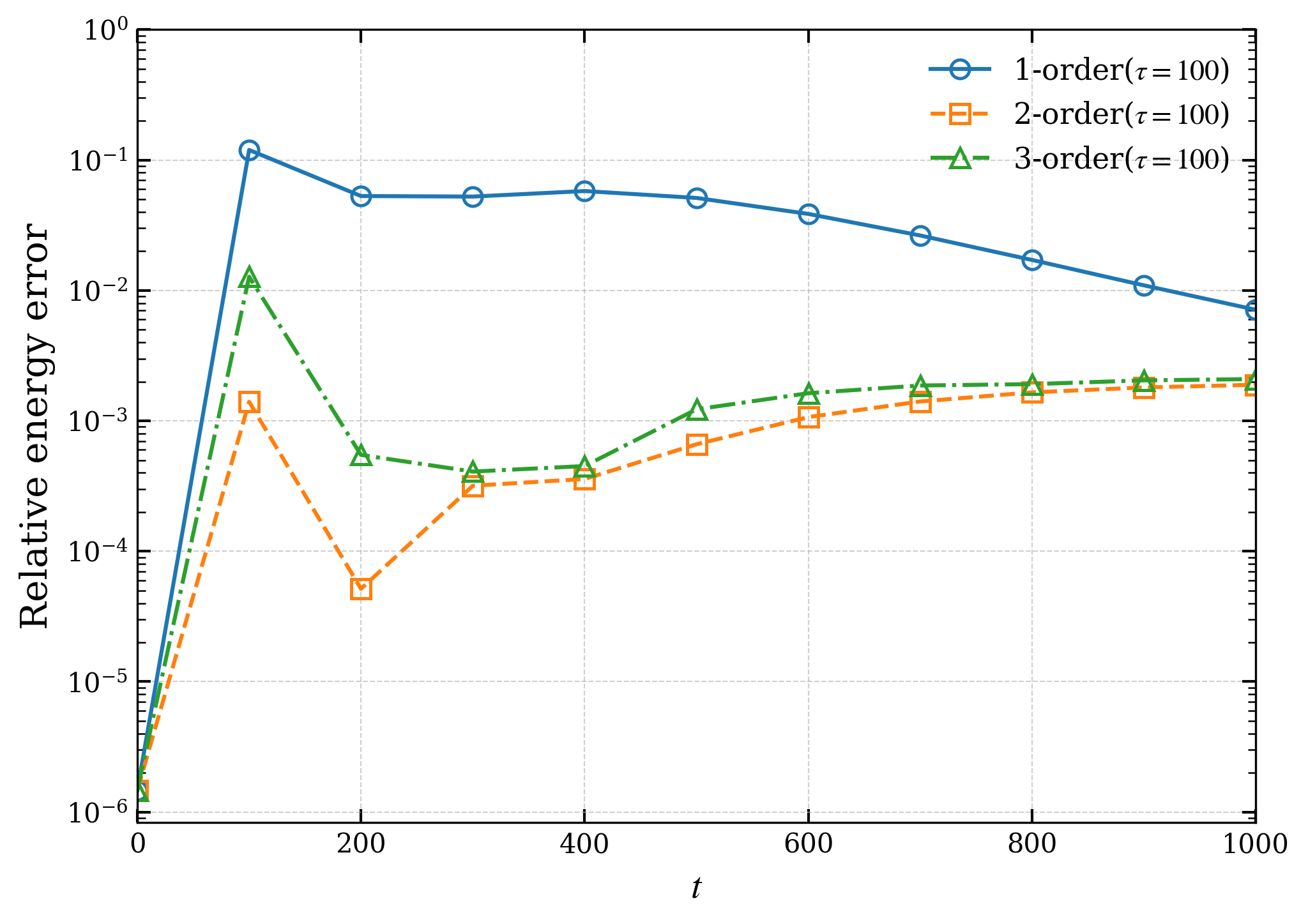}
\caption{Relative energy error}
\end{subfigure}\hfill
\begin{subfigure}[t]{0.32\textwidth}
\centering
\includegraphics[width=\linewidth]{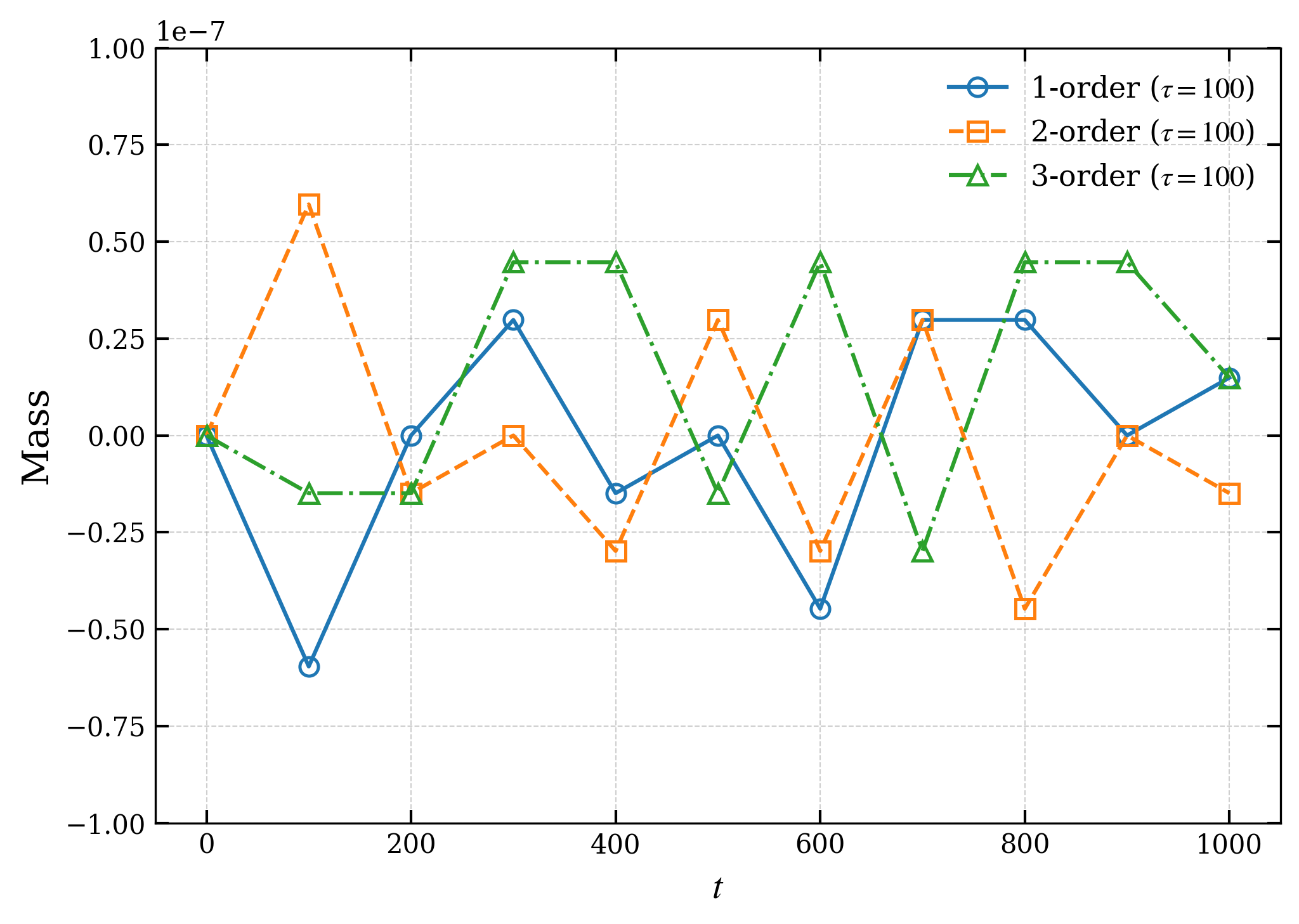}
\caption{Mass error}
\end{subfigure}
\caption{One-dimensional Cahn--Hilliard equation ($M=2\times10^{-4}$, $\varepsilon=0.01$, $\tau=100$). (a) Energy evolution of the first-, second-, and third-order VS-EVNN schemes against the reference. (b) Relative energy error over time. (c) Deviation of the total mass from its initial value, which remains at the single-precision round-off level.}
\label{fig:1d-ch-energy}
\end{figure}

As a more demanding test, we take $M=1$ and $\varepsilon=0.002$ in the sharp-interface regime on the finer grid $N=4096$. Figure~\ref{fig:1d-ch-sharp} shows the third-order VS-EVNN solution at the step $\tau=0.1$. The computed profile is visually close to the reference from $t=0.2$ onward, and the relative $L^2$ error decreases to $1.32\times10^{-3}$ by $t=0.3$. Figure~\ref{fig:1d-ch-sharp-energy} shows these energy histories at the per-order step sizes stated in the caption. The mass error again stays at the round-off level, as in Figure~\ref{fig:1d-ch-energy}.

\begin{figure}[H]
\centering
\includegraphics[width=0.85\textwidth]{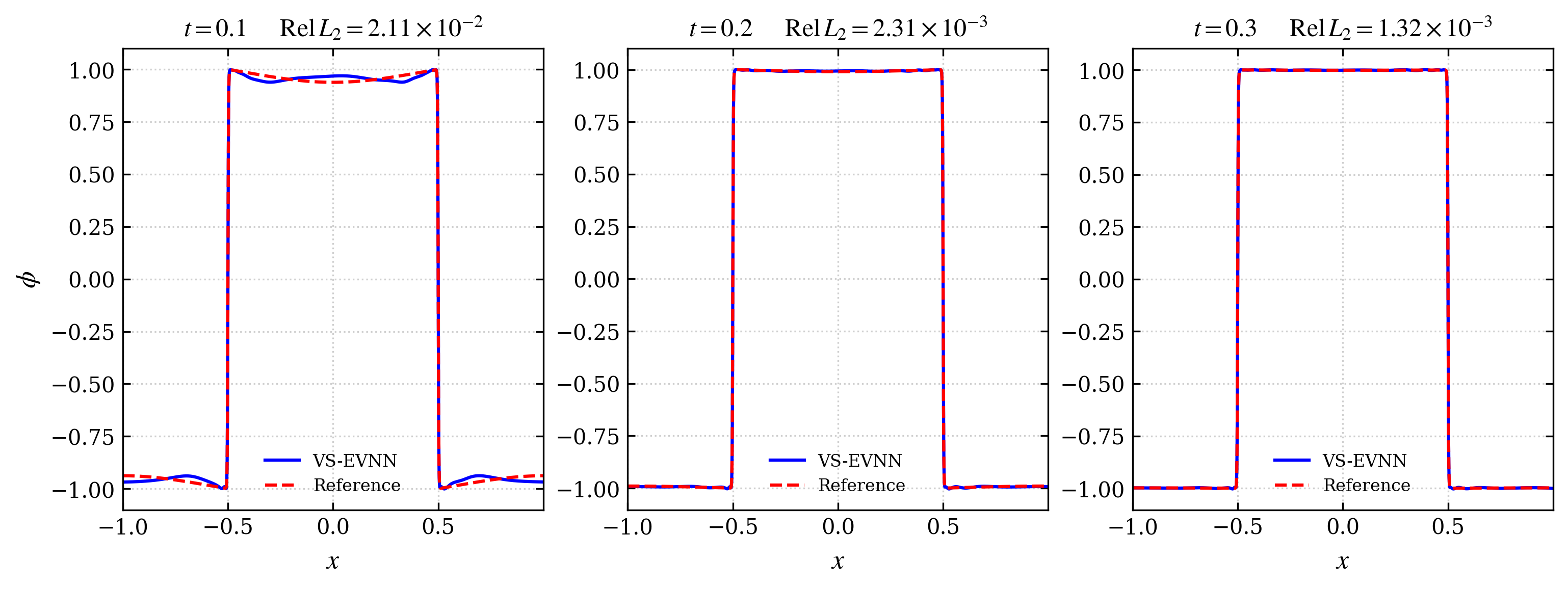}
\caption{One-dimensional Cahn--Hilliard equation with a sharp interface ($M=1$, $\varepsilon=0.002$, $N=4096$). Third-order VS-EVNN solution (solid) at $\tau=0.1$ and reference (dashed) at $t=0.1$, $0.2$, and $0.3$, with the relative $L^2$ errors annotated above each panel.}
\label{fig:1d-ch-sharp}
\end{figure}

\begin{figure}[H]
\centering
\includegraphics[width=0.48\textwidth]{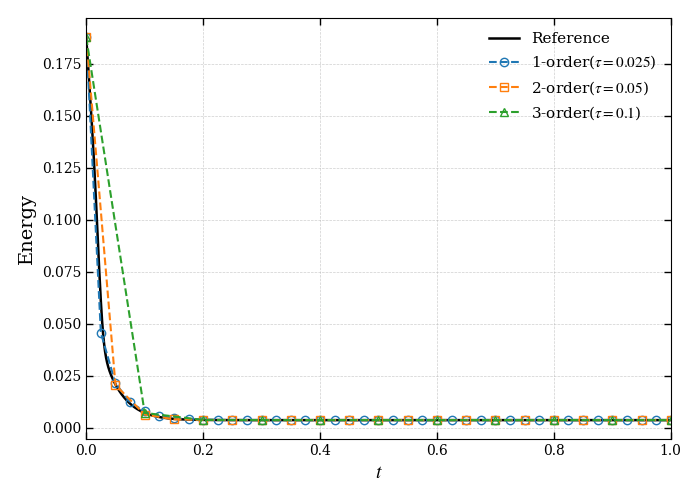}
\caption{One-dimensional Cahn--Hilliard equation with a sharp interface ($M=1$, $\varepsilon=0.002$). Energy evolution of the VS-EVNN schemes against the reference. The first-, second-, and third-order schemes use $\tau=0.025$, $0.05$, and $0.1$, respectively, so this is not a comparison at a common time step.}
\label{fig:1d-ch-sharp-energy}
\end{figure}

\subsubsection{Two-dimensional Allen--Cahn equation} The two-dimensional Allen--Cahn runs use the two initial conditions of Section~\ref{subsec:test-problems} with the time step $\tau = 1$. Fixed-step results are reported up to $T=20$.

\paragraph{Star-shaped interface.} Figure~\ref{fig:2d-star} compares the solutions obtained by the VS-EVNN schemes of different orders, and Figure~\ref{fig:2d-ac-errors}(a,c) reports the corresponding relative $L^2$ and energy errors over time.

\begin{figure}[H]
\centering
\includegraphics[width=0.7\textwidth]{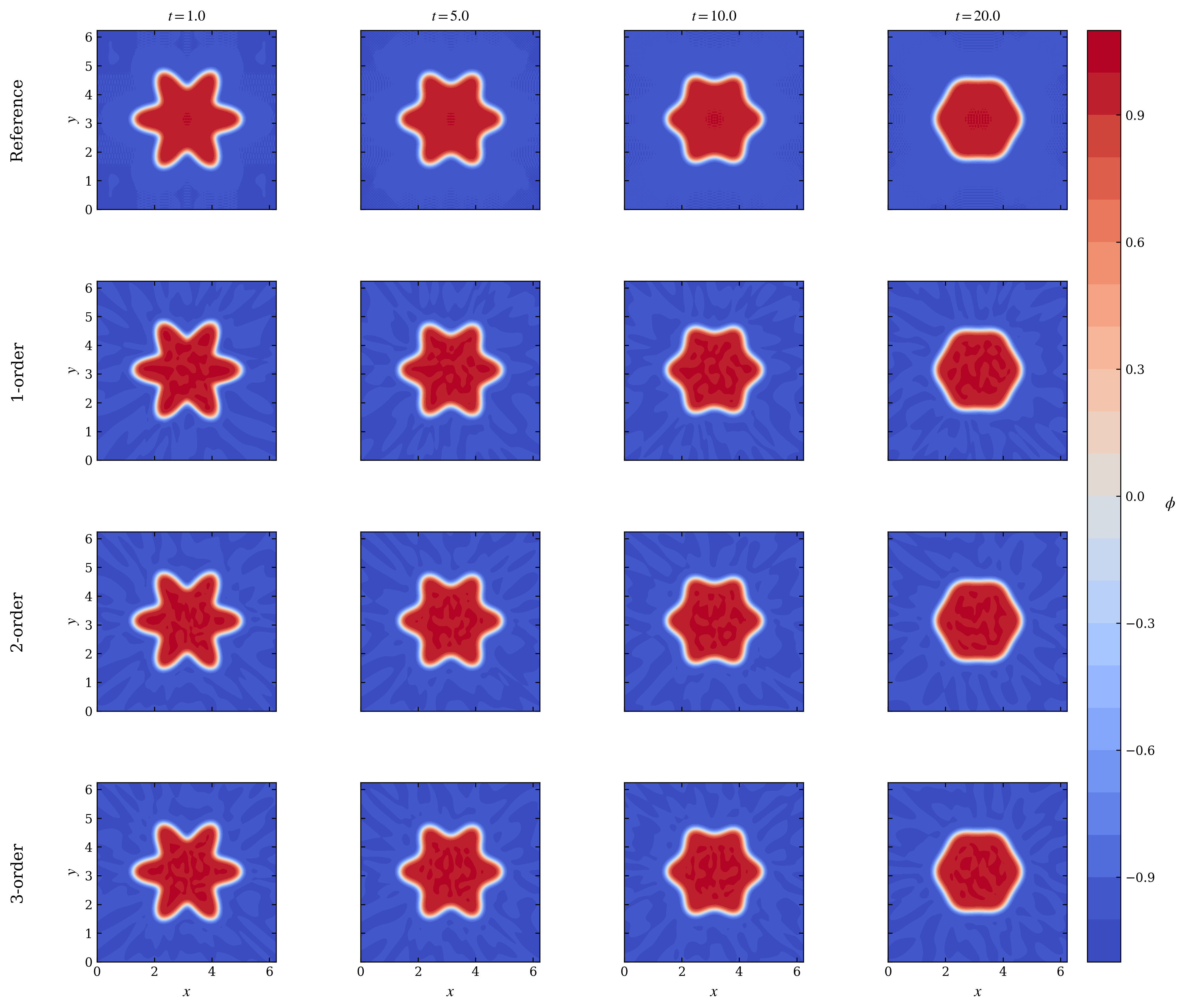}
\caption{Two-dimensional Allen--Cahn equation, star-shaped interface ($\tau=1$). Solution snapshots at $t=1, 5, 10, 20$ for the first-, second-, and third-order VS-EVNN schemes, with the reference in the top row.}
\label{fig:2d-star}
\end{figure}

\begin{figure}[H]
\centering
\begin{subfigure}[t]{0.48\textwidth}
\includegraphics[width=\textwidth]{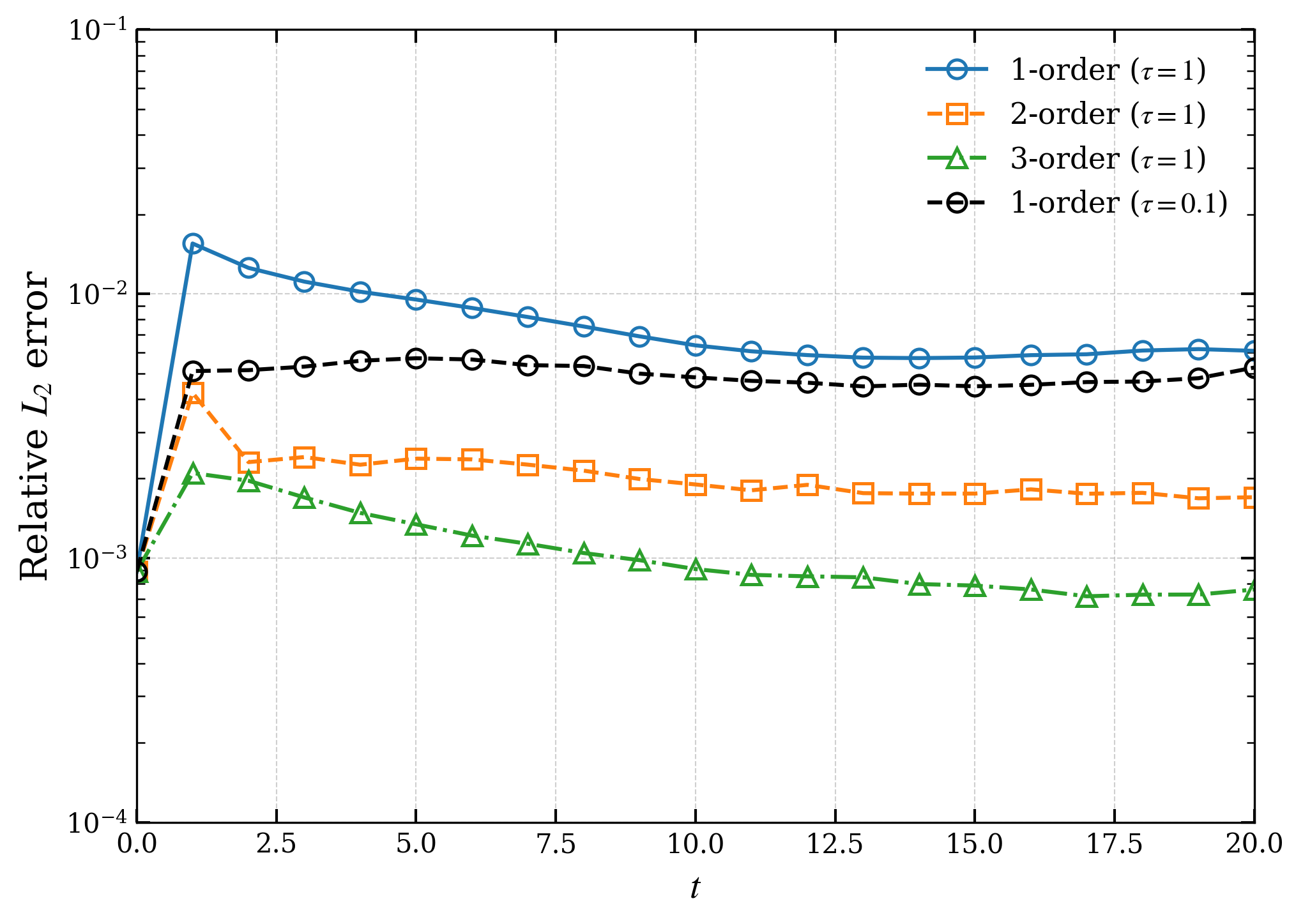}
\caption{Star: relative $L^2$ error}
\label{fig:2d-ac-errors-a}
\end{subfigure}\hfill
\begin{subfigure}[t]{0.48\textwidth}
\includegraphics[width=\textwidth]{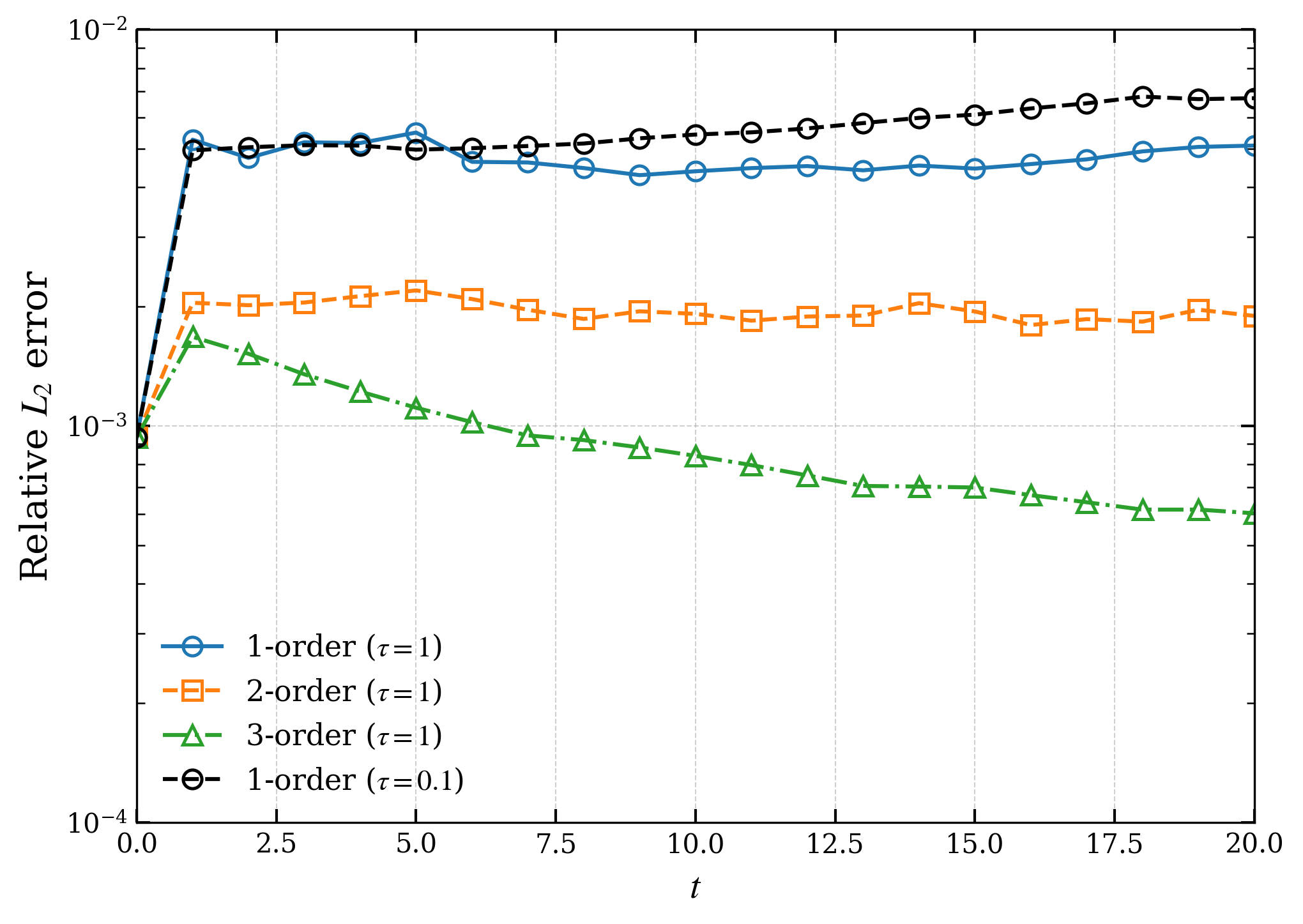}
\caption{Circles: relative $L^2$ error}
\label{fig:2d-ac-errors-b}
\end{subfigure}\hfill

\begin{subfigure}[t]{0.48\textwidth}
\includegraphics[width=\textwidth]{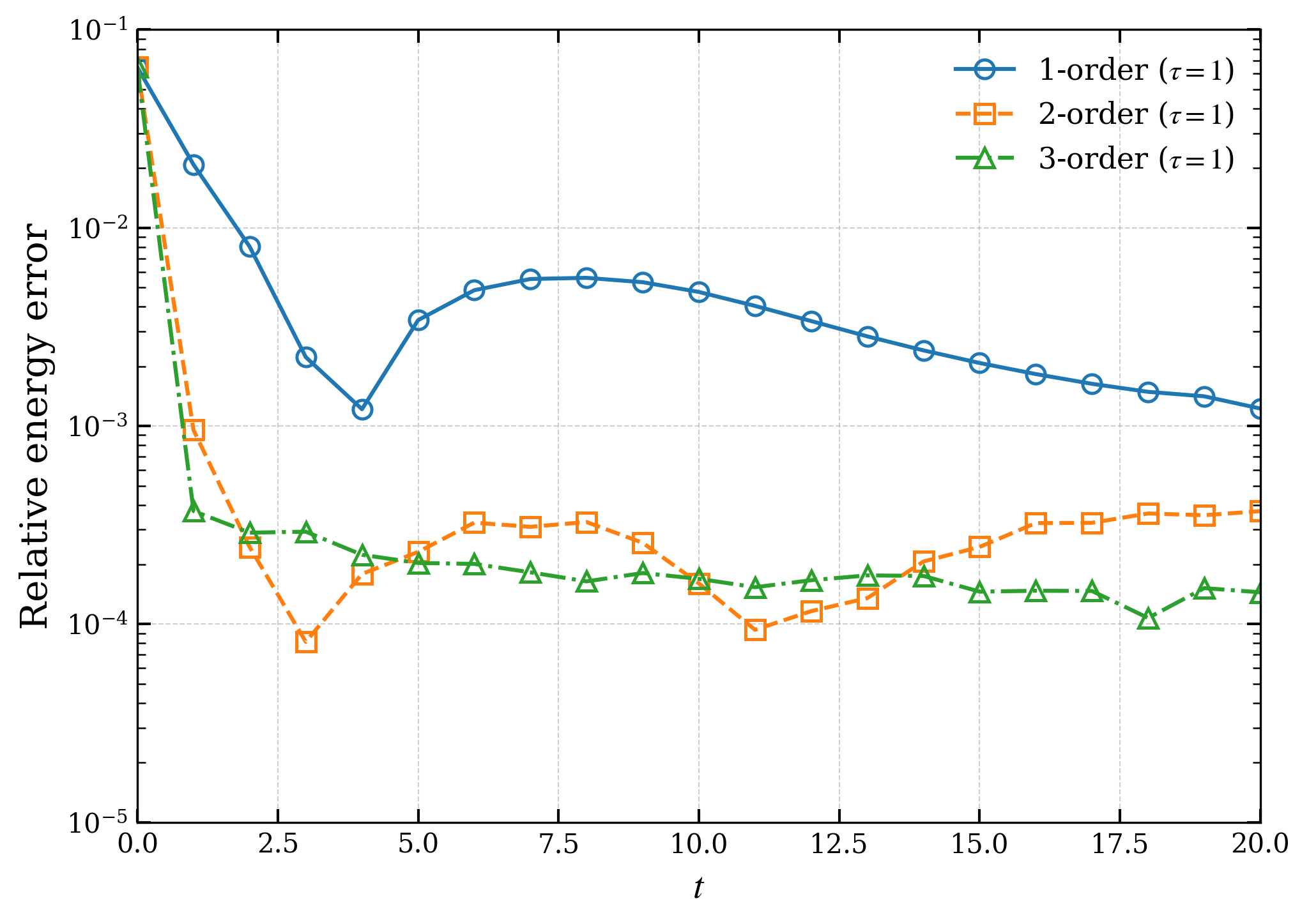}
\caption{Star: relative energy error}
\label{fig:2d-ac-errors-c}
\end{subfigure}\hfill
\begin{subfigure}[t]{0.48\textwidth}
\includegraphics[width=\textwidth]{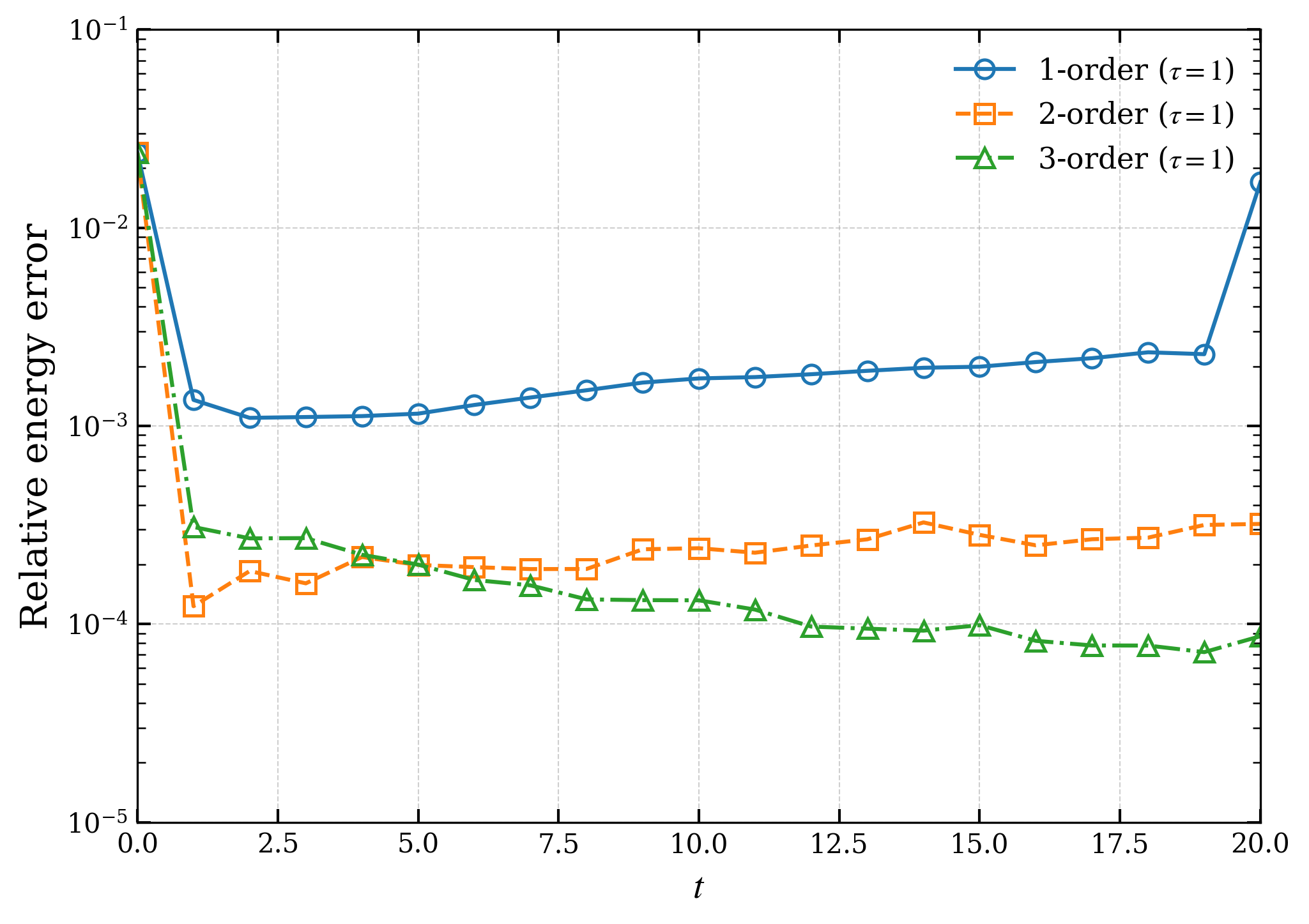}
\caption{Circles: relative energy error}
\label{fig:2d-ac-errors-d}
\end{subfigure}\hfill
\caption{Two-dimensional Allen--Cahn equation ($\tau=1$). Relative $L^2$ errors (top row) and relative energy errors (bottom row) of the first-, second-, and third-order VS-EVNN schemes for the star-shaped interface (a, c) and the randomly placed circles (b, d). Each top-row panel also includes a refined first-order run at $\tau=0.1$ to probe sensitivity to the time step. The bottom-row panels show the three orders only at $\tau=1$.}
\label{fig:2d-ac-errors}
\end{figure}

All three schemes capture the curvature-driven evolution in Figure~\ref{fig:2d-star}. The high-curvature tips of the star retract, and the interface relaxes toward a convex, nearly circular shape by $t=20$. At the snapshot scale, the computed morphologies are close to the reference. The error histories quantify the differences between orders. After the initial transient, the higher-order relative $L^2$ errors generally decrease as the interface becomes smoother, although the error histories are not monotone (Figure~\ref{fig:2d-ac-errors-a}). The high-order VS-EVNN schemes at $\tau=1$ outperform the first-order scheme at the smaller time step $\tau=0.1$.  At $t=1$, the relative $L^2$ error falls from $1.55\times10^{-2}$ for the first-order scheme to $2.10\times10^{-3}$ for the third-order scheme, and the relative energy errors of the higher-order schemes are smaller than those of the first-order scheme over the displayed evolution (Figure~\ref{fig:2d-ac-errors-c}).

\paragraph{Randomly placed circles.}
\label{subsec:2d-random-circles}
Figure~\ref{fig:2d-random} compares the VS-EVNN solutions across orders for the randomly placed circles. The corresponding error histories are shown in Figure~\ref{fig:2d-ac-errors}(b,d). To assess robustness with respect to the random draw, we report in Table~\ref{tab:2d-seeds} the relative $L^2$ errors, averaged over $t \in [0,20]$, for five different random seeds. The headline seed-42 run of the preceding figures is reported separately and is not one of the five.

\begin{figure}[H]
\centering
\includegraphics[width=0.65\textwidth]{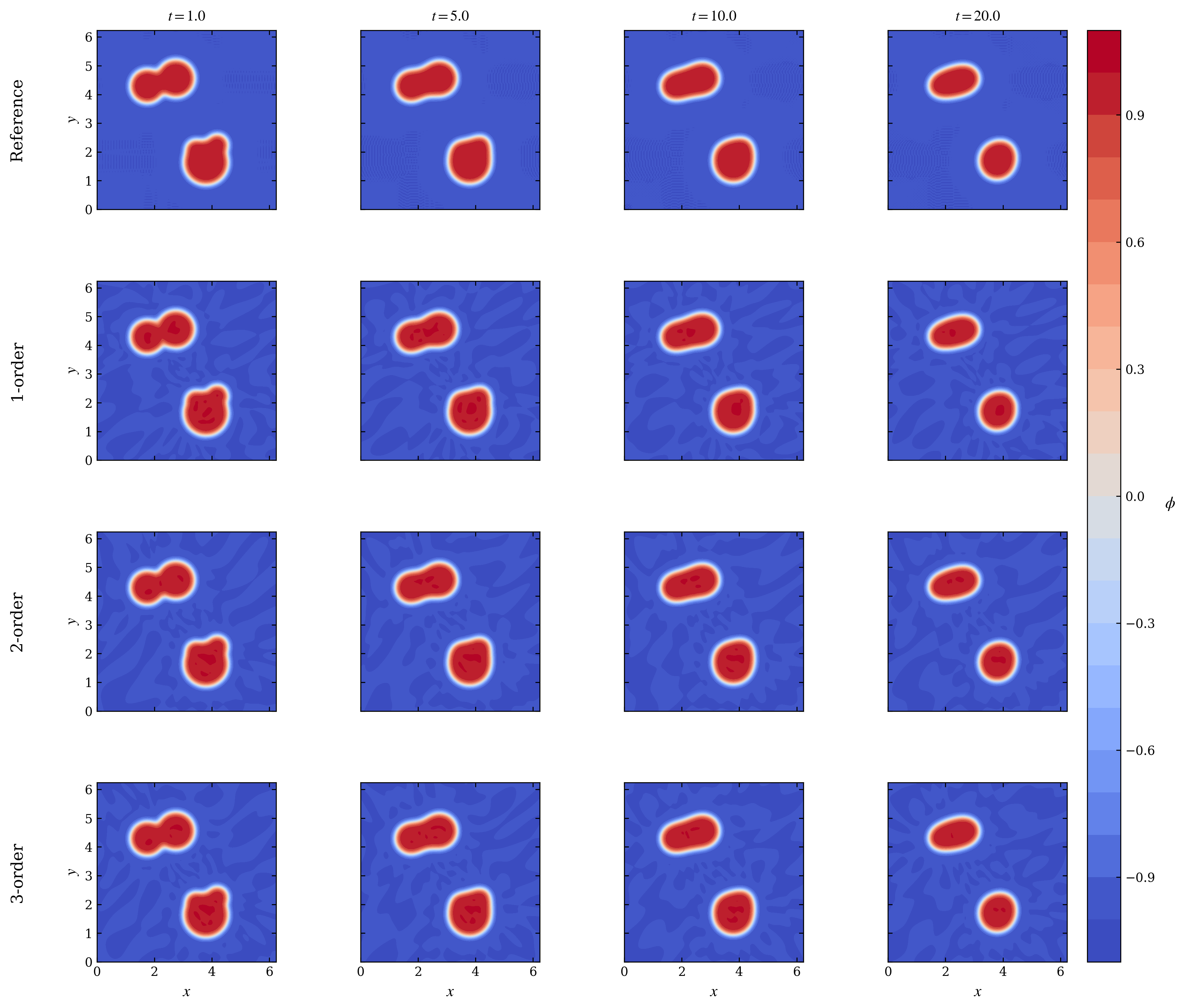}
\caption{Two-dimensional Allen--Cahn equation, randomly placed circles ($\tau=1$). Solution snapshots at $t=1, 5, 10, 20$ for the first-, second-, and third-order VS-EVNN schemes, with the reference in the top row.}
\label{fig:2d-random}
\end{figure}

\begin{table}[H]
\centering
\caption{Two-dimensional Allen--Cahn equation, randomly placed circles. Relative $L^2$ errors under different random seeds (averaged over $t \in [0,20]$, $\tau=1$).}
\label{tab:2d-seeds}
\begin{tabular}{c c c c}
\toprule
Seed & 1st order & 2nd order & 3rd order \\
\midrule
1 & $2.55\times10^{-2}$ & $1.90\times10^{-3}$ & $1.13\times10^{-3}$ \\
2 & $8.89\times10^{-3}$ & $1.41\times10^{-3}$ & $1.16\times10^{-3}$ \\
3 & $7.80\times10^{-3}$ & $1.48\times10^{-3}$ & $1.29\times10^{-3}$ \\
4 & $1.57\times10^{-2}$ & $1.68\times10^{-3}$ & $1.53\times10^{-3}$ \\
5 & $5.39\times10^{-3}$ & $1.87\times10^{-3}$ & $8.71\times10^{-4}$ \\
\midrule
Mean & $1.27\times10^{-2}$ & $1.67\times10^{-3}$ & $1.20\times10^{-3}$ \\
\bottomrule
\end{tabular}
\end{table}

Two effects, both visible in Figure~\ref{fig:2d-random}, drive the coarsening dynamics. An overlapping pair of circles merges into a single convex domain, and the domains slowly shrink under curvature flow. All three schemes reproduce this evolution. The first-order error grows slowly in time, from $4.96\times10^{-3}$ at $t=1$ to $6.72\times10^{-3}$ at $t=20$, while the third-order error decreases to $6.02\times10^{-4}$. After the initial transient, the energy errors of the higher-order schemes are on the order of $10^{-4}$ (Figure~\ref{fig:2d-ac-errors-d}). The seed study in Table~\ref{tab:2d-seeds} indicates that this advantage is robust. Across five random initial conditions, the first-order error varies by a factor of about five, whereas the second- and third-order errors stay within narrow bands around $1.7\times10^{-3}$ and $1.2\times10^{-3}$, respectively.

\subsubsection{Two-dimensional Cahn--Hilliard equation} The two-dimensional Cahn--Hilliard runs use the randomly placed circles initial condition of Section~\ref{subsec:test-problems} with the setup of Section~\ref{subsec:implementation}. The final time is $T_{\text{final}}=100$, and the fixed-step runs use $\tau=20$ for all three orders.

\begin{figure}[H]
\centering
\includegraphics[width=0.65\textwidth]{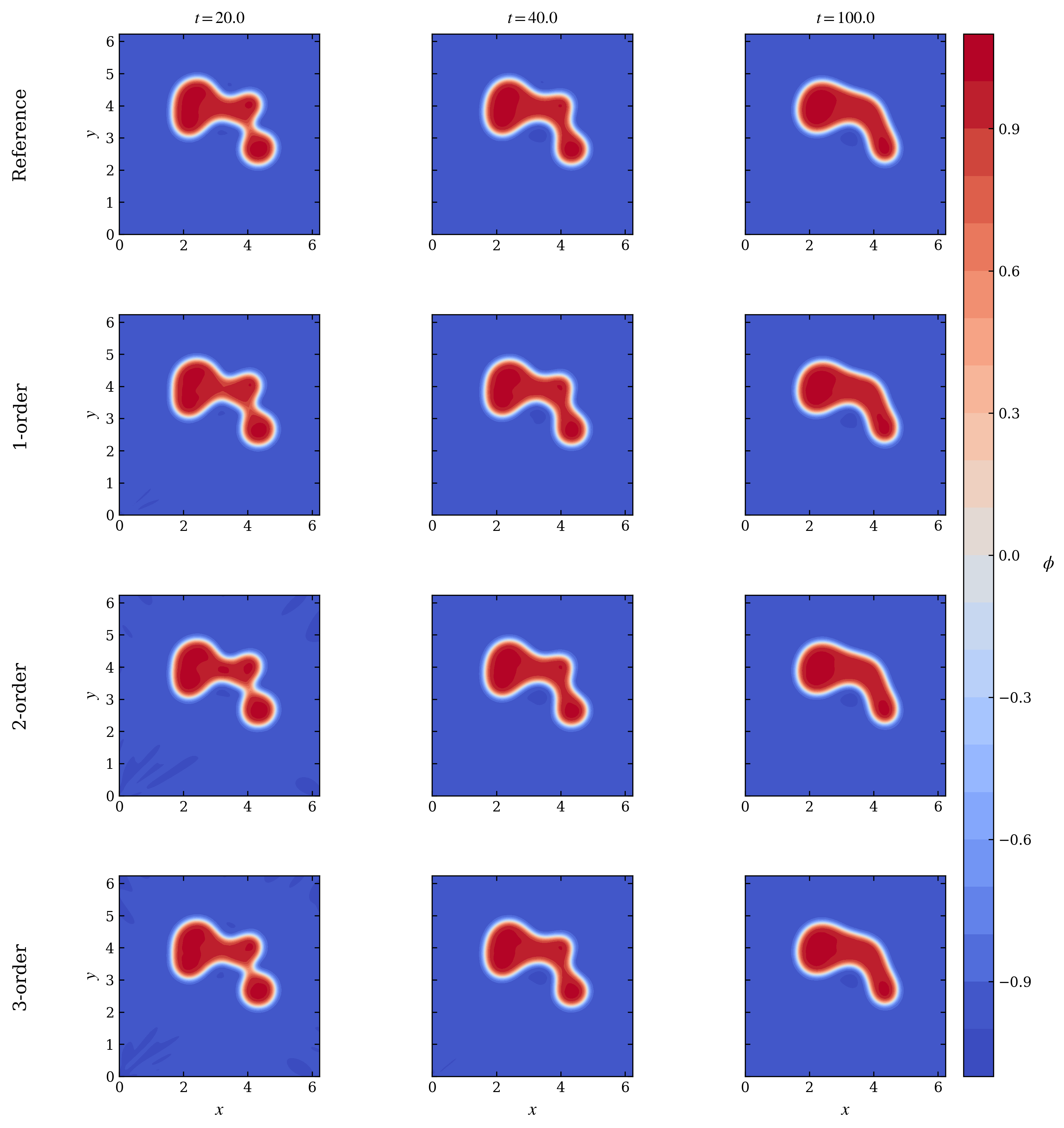}
\caption{Two-dimensional Cahn--Hilliard equation, randomly placed circles ($\tau=20$). Solution snapshots at $t=20$, $40$, and $100$ for the first-, second-, and third-order VS-EVNN schemes, with the reference in the top row.}
\label{fig:2d-ch-random}
\end{figure}

\begin{figure}[H]
\centering
\begin{subfigure}[t]{0.32\textwidth}
\includegraphics[width=\textwidth]{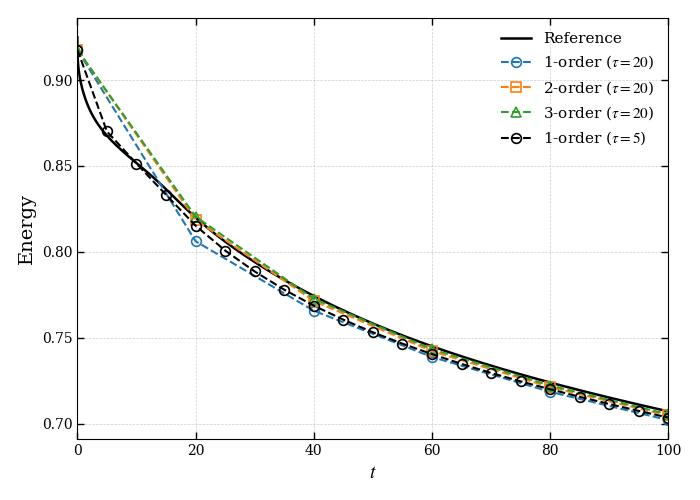}
\caption{Energy evolution}
\end{subfigure}\hfill
\begin{subfigure}[t]{0.32\textwidth}
\includegraphics[width=\textwidth]{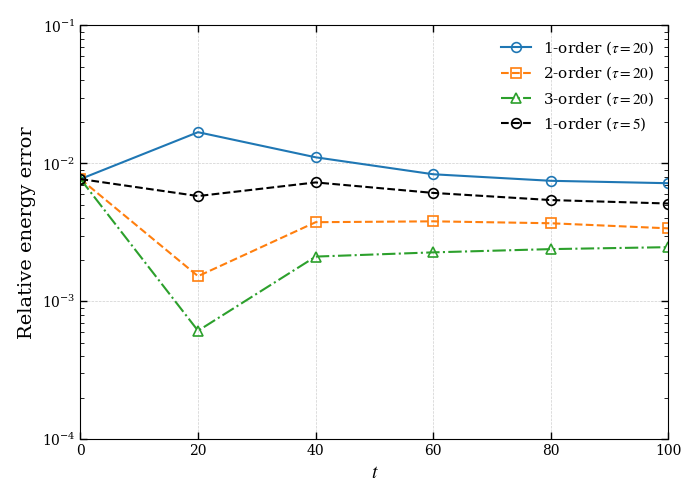}
\caption{Relative energy error}
\end{subfigure}\hfill
\begin{subfigure}[t]{0.32\textwidth}
\includegraphics[width=\textwidth]{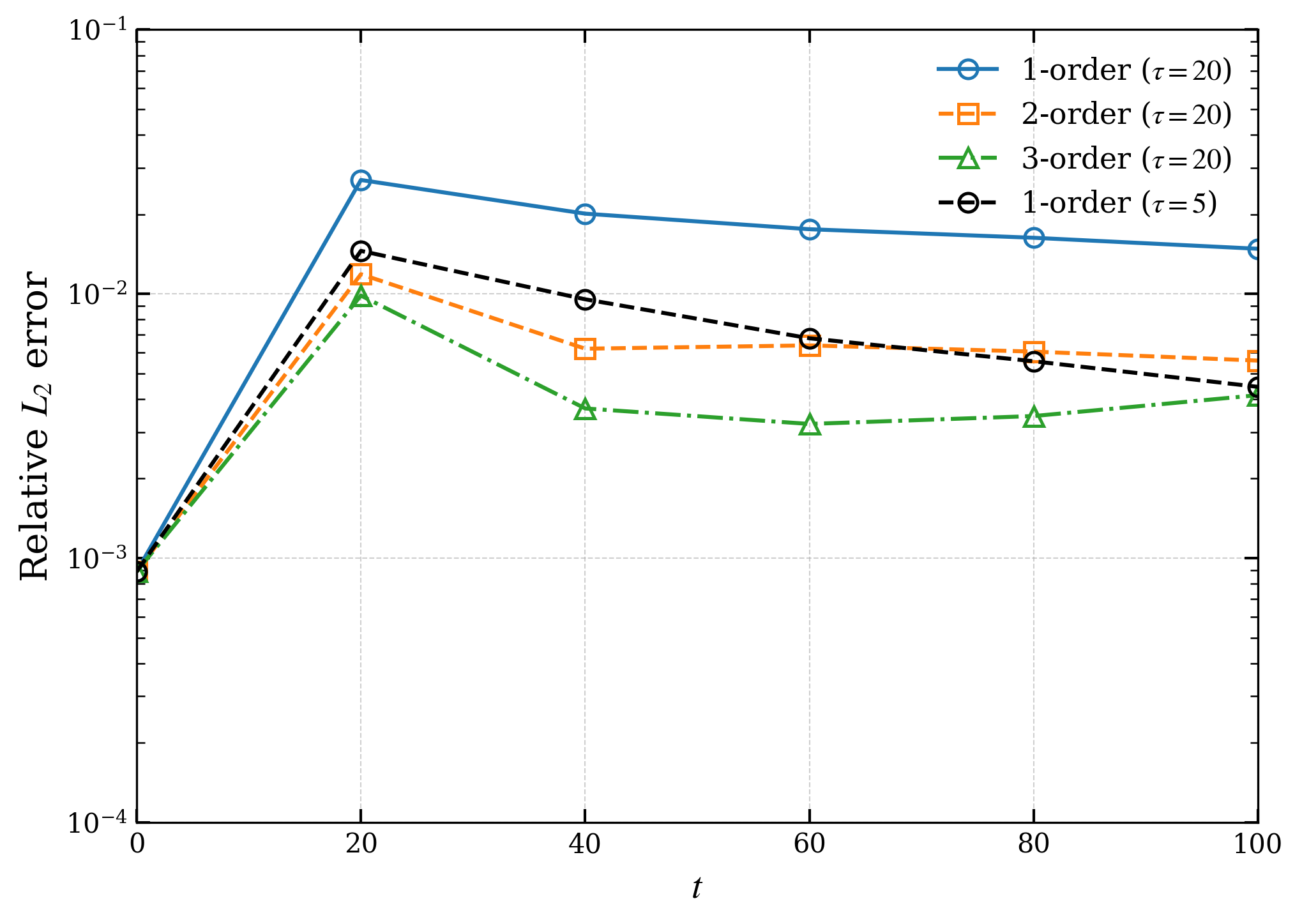}
\caption{Relative $L^2$ error}
\end{subfigure}
\caption{Two-dimensional Cahn--Hilliard equation, randomly placed circles ($\tau=20$). (a) Energy evolution of the first-, second-, and third-order VS-EVNN schemes against the reference. (b) Relative energy error over time. (c) Relative $L^2$ error of the solution. All three panels also include a first-order run at the smaller step $\tau=5$ for comparison.}
\label{fig:2d-ch-random-diag}
\end{figure}

Unlike the Allen--Cahn evolution of Section~\ref{subsec:2d-random-circles}, where isolated domains shrink and vanish under curvature flow, the conserved dynamics coarsen by mass redistribution. In Figure~\ref{fig:2d-ch-random}, the overlapping circles merge into a single domain whose area is determined by the conserved mass, and the small satellite domain is absorbed via $H^{-1}$ transport rather than disappearing pointwise. All three schemes reproduce this evolution at the large step $\tau=20$. The second- and third-order snapshots closely match the reference at the displayed resolution, while the first-order scheme slightly smooths the neck region during the merging phase around $t=40$. The energy decays monotonically for all orders, and the relative energy errors and relative $L^2$ errors of the higher-order schemes are generally smaller than those of the first-order scheme at the same time step (Figure~\ref{fig:2d-ch-random-diag}).

\subsection{Adaptive time stepping and long-time integration}
\label{subsec:adaptive-results}

\subsubsection{Tolerance study for the one-dimensional Allen--Cahn equation} We apply the adaptive strategy of Section~\ref{subsec:time-adaptive} to the first-order VS-EVNN. Figure~\ref{fig:1d-timestep} plots the adaptively selected time step for tolerances $\mathrm{tol}\in\{0.05, 0.02, 0.01\}$, while Figure~\ref{fig:1d-timestep-order} compares the time-step histories of the first-, second-, and third-order VS-EVNN at the fixed $\mathrm{tol}=0.01$. The higher-order schemes admit much larger steps. Table~\ref{tab:efficiency_accuracy} reports the accepted step counts and the relative $L^2$ errors for the adaptive runs. As $\mathrm{tol}$ decreases, the number of accepted steps increases and the relative errors decrease. At $\mathrm{tol}=0.01$, the higher-order schemes need fewer accepted steps than the first-order scheme and reach smaller relative $L^2$ errors. For all tolerances, the adaptive step size is small during the initial period of rapid change in the solution and then generally increases over time, with intermediate reductions.

\begin{figure}[H]
\centering
\begin{subfigure}[t]{0.48\textwidth}
\centering
\includegraphics[width=\linewidth]{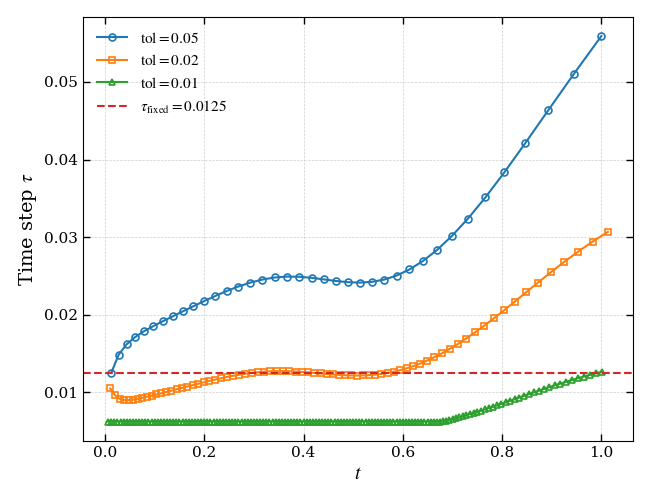}
\caption{First-order VS-EVNN for $\mathrm{tol}\in\{0.05, 0.02, 0.01\}$}
\label{fig:1d-timestep}
\end{subfigure}\hfill
\begin{subfigure}[t]{0.48\textwidth}
\centering
\includegraphics[width=\linewidth]{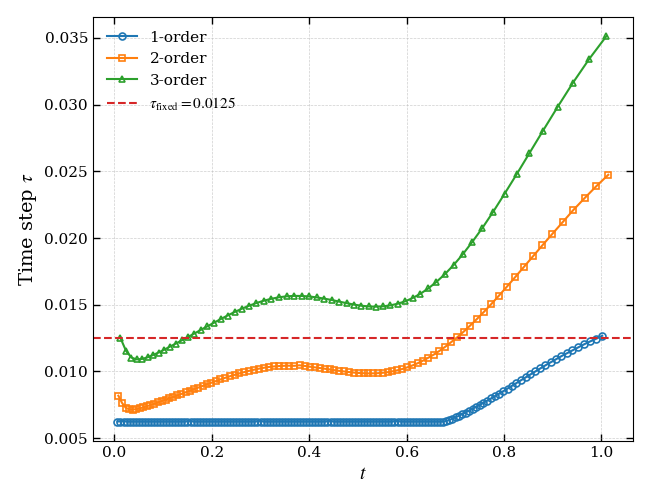}
\caption{First-, second-, and third-order VS-EVNN}
\label{fig:1d-timestep-order}
\end{subfigure}
\caption{One-dimensional Allen--Cahn equation with adaptive time stepping (safety factor $\rho = 0.9$). (a) Adaptive time-step evolution for the first-order scheme at different tolerances. (b) Time-step size for the first-, second-, and third-order schemes at $\mathrm{tol}=0.01$.}
\end{figure}

\begin{table}[H]
\centering
\caption{Accepted-step counts and relative $L^2$ errors of the adaptive VS-EVNN runs for the one-dimensional Allen--Cahn problem (first-, second-, and third-order schemes).}
\label{tab:efficiency_accuracy}
\begin{tabular}{c l c c}
\toprule
Order & Time stepping & Accepted steps & Relative $L^2$ error \\
\midrule
1 & Adaptive, $\mathrm{tol}=0.05$ & 38 & $2.60 \times 10^{-2}$ \\
1 & Adaptive, $\mathrm{tol}=0.02$ & 70 & $1.38 \times 10^{-2}$ \\
1 & Adaptive, $\mathrm{tol}=0.01$ & 144 & $9.24 \times 10^{-3}$ \\
2 & Adaptive, $\mathrm{tol}=0.01$ & 91 & $6.82 \times 10^{-3}$ \\
3 & Adaptive, $\mathrm{tol}=0.01$ & 62 & $2.96 \times 10^{-3}$ \\
\bottomrule
\end{tabular}
\end{table}

\subsubsection{Long horizons for the two-dimensional Allen--Cahn equation} For a long-horizon test, we run the star-shaped problem to $T_{\text{final}}=200$ with the adaptive time-stepping strategy of Section~\ref{subsec:time-adaptive}. The initial time step is $\tau=5$, with a lower bound of $1$ and an upper bound of $10$. The controller tolerance is $\mathrm{tol}=0.08$ and the safety factor is $\rho=0.9$. Figure~\ref{fig:adapt-star-t200-step} shows the energy curves and the time-step variation.

The third-order VS-EVNN energy curve is closest to the reference, while the first- and second-order schemes both show visible errors. All three take small time steps during the initial phase. During a later phase of strong solution variation, the selected time steps shrink.

\begin{figure}[H]
\centering
\begin{subfigure}[t]{0.475\textwidth}
\centering
\includegraphics[width=\linewidth]{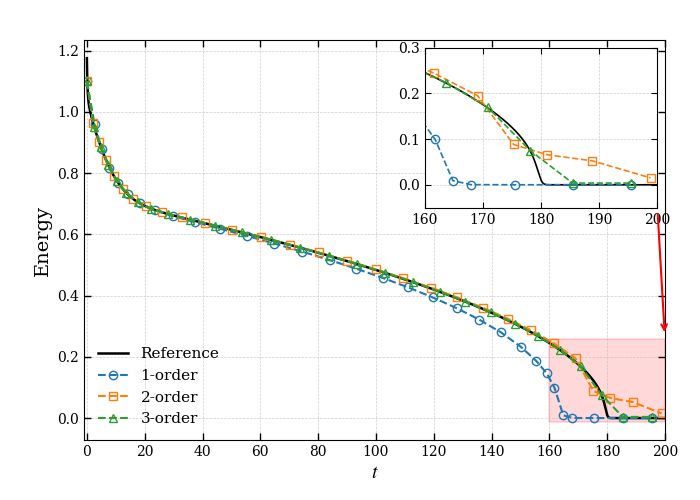}
\caption{Energy evolution}
\end{subfigure}\hfill
\begin{subfigure}[t]{0.475\textwidth}
\centering
\includegraphics[width=\linewidth]{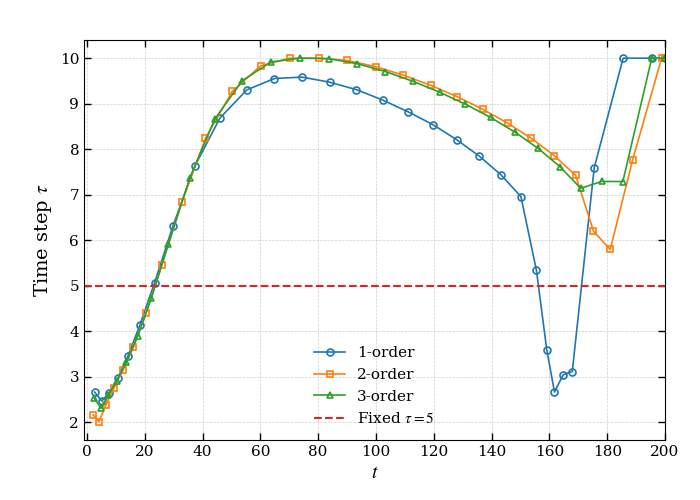}
\caption{Time-step history}
\end{subfigure}
\caption{Two-dimensional Allen--Cahn equation, star-shaped interface, with adaptive time stepping ($T_{\text{final}}=200$). (a) Energy evolution for the first-, second-, and third-order VS-EVNN schemes. (b) Time-step history.}
\label{fig:adapt-star-t200-step}
\end{figure}

\subsubsection{Long horizons for the two-dimensional Cahn--Hilliard equation} We test the adaptive controller of Section~\ref{subsec:time-adaptive} by rerunning the randomly placed circles problem to $T_{\text{final}}=2000$ with adaptive steps for all three orders, alongside fixed-step runs at $\tau_{\text{fixed}}=20$. The adaptive runs use the tolerance $\mathrm{tol}=0.1$, the safety factor $\rho=0.9$, and the step bounds $\tau_{\min}=10$ and $\tau_{\max}=200$. Figure~\ref{fig:2d-ch-adaptive} collects the energy histories, the step-size histories, and the relative $L^2$ error. The controller keeps the step near $\tau_{\text{fixed}}$ during the fast-merging phase, then increases it steadily until the cap $\tau_{\max}$ is reached around $t=1200$. The higher-order schemes grow the step earlier. The energy decays monotonically along the adaptive trajectories, the relative $L^2$ errors of the second- and third-order runs remain on the order of $10^{-3}$ over the whole horizon, and the mass error again stays at the round-off level, as in the one-dimensional diagnostic in Figure~\ref{fig:1d-ch-energy}.

\begin{figure}[H]
\centering
\begin{subfigure}[t]{0.32\textwidth}
\includegraphics[width=\textwidth]{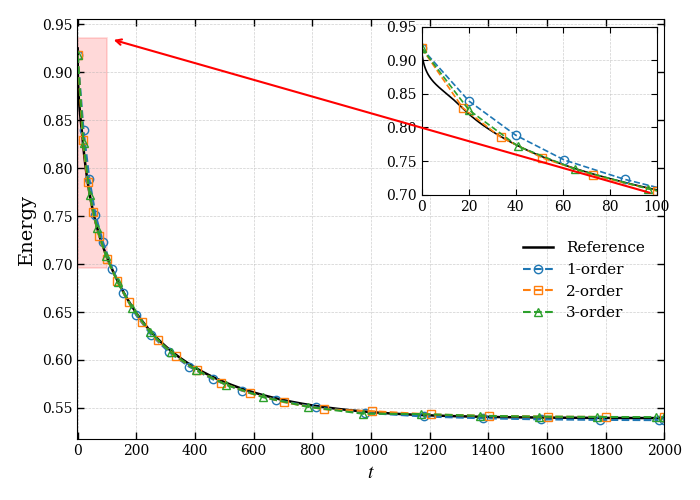}
\caption{Energy evolution}
\end{subfigure}\hfill
\begin{subfigure}[t]{0.32\textwidth}
\includegraphics[width=\textwidth]{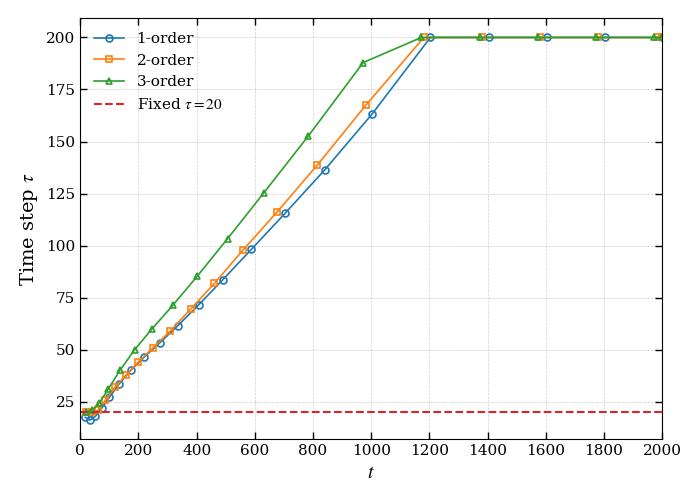}
\caption{Time-step history}
\end{subfigure}\hfill
\begin{subfigure}[t]{0.32\textwidth}
\includegraphics[width=\textwidth]{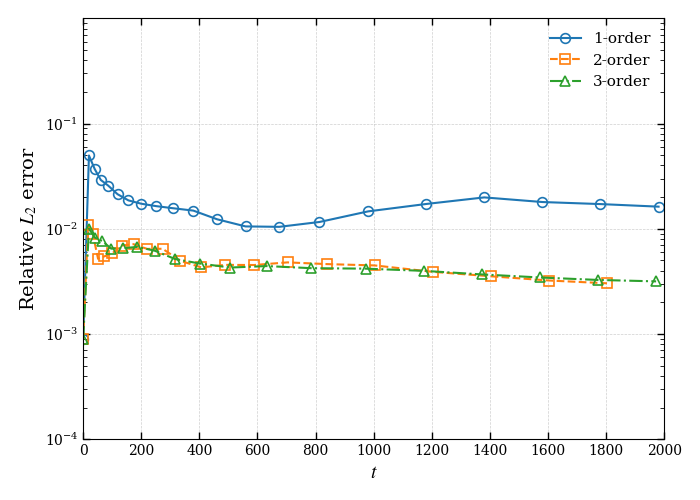}
\caption{Relative $L^2$ error}
\end{subfigure}
\caption{Two-dimensional Cahn--Hilliard equation, randomly placed circles with adaptive time stepping ($T_{\text{final}}=2000$, $\mathrm{tol}=0.1$, $\rho=0.9$, $\tau\in[10,200]$). Energy evolution, time-step history, and relative $L^2$ error for the first-, second-, and third-order adaptive VS-EVNN schemes.}
\label{fig:2d-ch-adaptive}
\end{figure}

\section{Conclusion} 

We have developed a variable-scaled energetic variational neural network (VS-EVNN) method for phase-field gradient flows. The method combines a fixed spatial-coordinate reparametrization with multi-stage variational extrapolation, using warm-started network solves and fixed earlier-stage anchors in the metric of the Allen--Cahn flow or the mobility-weighted, mass-conserving metric of the Cahn--Hilliard flow. Under the reported optimization settings, coordinate scaling and higher-order time integration improve accuracy in the tested examples. The heuristic adaptive controller adjusts the time step based on the observed change in the solution. The numerical experiments exhibit energy decay, and the Cahn--Hilliard mean projection preserves discrete mass up to round-off. For exact stage solves, the variational schemes are unconditionally energy stable.

\section*{Acknowledgments} Xiaobo Jing's work is supported by the National Natural Science Foundation of China (Nos.~12147165 and W2612008), the Jiangsu Provincial Scientific Research Center of Applied Mathematics (No.~BK20233002), the Basic Research Program of Jiangsu (No.~BK20252120), and the Start-up Research Fund of Southeast University (No.~RF1028623369). Jia Zhao acknowledges support from the National Science Foundation under grant NSF-DMS-2513764.

\bibliographystyle{plain}
\bibliography{reference}

\end{document}